\def\XXint#1#2#3{{\setbox0=\hbox{$#1{#2#3}{\int}$ }
		\vcenter{\hbox{$#2#3$ }}\kern-.6\wd0}}
\renewcommand{\Re}{\operatorname{Re}}
\renewcommand{\Im}{\operatorname{Im}}
\renewcommand{\P}{\mathbb{P}}
\newcommand{\intprod}{\mathbin{\raisebox{.7\depth}{\scalebox{1}[-1]{$\lnot$}} \,}}
\newcommand{\rintprod}{\mathbin{\raisebox{.7\depth}{\scalebox{-1}[-1]{$\lnot$}}}}
\newcommand{\LB}{\left[}
\newcommand{\RB}{\right]}
\newcommand{\LA}{\langle}
\newcommand{\RA}{\rangle}
\newcommand{\Z}{{\mathbb Z}}
\newcommand{\C}{{\mathbb C}}
\newcommand{\R}{{\mathbb R}}
\newcommand{\G}{{\mathbb G}}
\renewcommand{\H}{{\mathbb H}}
\newcommand{\bi}{{\mathbf i}}
\newcommand{\bj}{{\mathbf j}}
\newcommand{\bk}{{\mathbf k}}
\newcommand{\SU}{{\mathrm{SU} }}
\newcommand{\Sp}{{\mathrm{Sp} }}
\newcommand{\U}{{\mathrm U}}
\newcommand{\SO}{{\mathrm{SO} }}
\renewcommand{\G}{{\mathrm G}}
\newcommand{\Spin}{{\mathrm{Spin} }}
\newcommand{\SL}{{\mathrm{SL} }}
\newcommand{\Sym}{{\mathrm{Sym} }}
\newcommand{\gothg}{{\mathfrak g}}
\newcommand{\so}{\mathfrak {so }}
\newcommand{\scrA}{{\mathscr A}}
\newcommand{\scrG}{{\mathscr G}}
\newcommand{\scrL}{{\mathscr L}}
\newcommand{\scrV}{{\mathscr V}}
\newcommand{\End}{{\text{End}}}
\newcommand{\Lie}{{\mathrm{Lie}}}
\newcommand{\Ric}{{\mathrm{Ric}}}
\newtheorem{thm}{Theorem}[section]
\newtheorem{lemma}[thm]{Lemma}
\newtheorem*{lemma*}{Lemma}
\newtheorem{prop}[thm]{Proposition}
\newtheorem{conj}[thm]{Conjecture}
\newtheorem*{conj*}{Conjecture}
   \newtheoremstyle{others}
     {3pt}
     {2pt}
     {}
     {}
     {\bf}
     {.}
     {.5em}
     {}
\theoremstyle{others}
\newtheorem{rmk}[thm]{Remark}
\newtheorem*{rmk*}{Remark}
\newtheorem{defn}[thm]{Definition}
\newtheorem{example}[thm]{Example}
\numberwithin{equation}{section}
\begin{document}

\title{$\G_2$-instantons on the $7$-sphere} 
\author{Alex Waldron}
\subjclass[2010]{53C07, 53C29}
\address{Department of Mathematics, University of Wisconsin, Madison, 53706}
\email{waldron@math.wisc.edu}

\begin{abstract}
We study the deformation theory of $\G_2$-instantons on the round 7-sphere, 
specifically those
obtained from instantons on the 4-sphere
via the quaternionic Hopf fibration.
We find that 
the pullback of the standard ASD instanton 
lies in a smooth, complete, 15-dimensional family of $\G_2$-instantons. In general, the space of infinitesimal $\G_2$-instanton deformations on $S^7$ 
is identified with three copies of the space of ASD deformations on $S^4.$ 
\end{abstract}

\maketitle

\tableofcontents

\thispagestyle{empty}

\section{Introduction}



\subsection{Background}
Let $M$ be an oriented 7-manifold with a $\G_2$-structure, defined by a positive 3-form $\phi.$
Endow $M$ with the corresponding Riemannian metric $g = g_\phi$ and Hodge star operator $* = *_g$ (see \S \ref{ss:npg2} below).

A connection $A$ on a vector bundle $E \to M$ is said to be a \emph{$\G_2$-instanton} if its curvature $F_A$ satisfies
\begin{equation}\label{instantondef1}
F_A + * \left( \phi \wedge F_A \right) = 0.
\end{equation}
This equation appeared in the physics literature during the early 1980s \cite{corrigandevchand}. In the mid-1990s, not long after Joyce's construction \cite{joyceg2} of compact Riemannian manifolds with $\mathrm{Hol}(g) =\G_2,$ Donaldson and Thomas \cite{donaldsonthomas} proposed to define an invariant of (torsion-free) $\G_2$-structures 
by ``counting'' $\G_2$-instantons in a manner analogous to the Casson invariant in dimension three.

Instantons on compact $\G_2$-holonomy manifolds, with structure group $\SU(2)$ or $\SO(3),$ were first constructed by Walpuski \cite{walpuskig2onkummer} in the context of Joyce's Kummer construction \cite{joyceg2}. S{\'a} Earp and Walpuski \cite{saearpwalpuski}, Walpuski \cite{walpuskitcsexample}, and Menet, Nordstr\"om, and S{\'a} Earp \cite{menetnordstromsaearptcs} have succeeded in constructing instantons on several of the twisted connected sum $\G_2$-holonomy manifolds due to Kovalev \cite{kovalevtcs} and Corti-Haskins-Nordstr{\"o}m-Pacini \cite{cortihaskinsnordstrompacini}.
Despite this progress, instantons on compact manifolds with full $\G_2$ holonomy remain extremely difficult to construct. 
The counting program has also encountered a host of fascinating difficulties; 
for extensive discussion, see Donaldson-Segal \cite{donaldsonsegal}, Haydys-Walpuski \cite{haydyswalpuski}, Joyce \cite{joyceconjectures16}, and Doan-Walpuski \cite{doanwalpuskicountingassociatives}. 

A fruitful alternative is to consider metrics with less-than-full holonomy or $\G_2$-structures with certain types of torsion. The instantons that appear in these contexts can demonstrate geometric phenomena that we ultimately hope to observe in the torsion-free case.

One such class is the \emph{nearly parallel} $\G_2$-structures, which satisfy
\begin{equation}\label{nearlypar}
d \phi = \tau_0 \psi.
\end{equation}
Here $\psi = *_{g_{\phi}} \phi$ is the dual 4-form and $\tau_0$ is a 
constant.
Nearly parallel $\G_2$-structures
have much in common with the torsion-free case: their associated metrics are Einstein and they enjoy a natural spinorial description \cite{friedrichkathmoroianusemmelmann}. More importantly for this work, instantons on nearly parallel $\G_2$-manifolds are critical points of the Yang-Mills functional (see \cite{harlandnolle} or (\ref{instareym}) below). This stands in contrast with general co-closed $\G_2$-structures. 

Most famously, the 7-sphere carries a ``standard'' $\G_2$-structure induced by the octonionic structure of $\R^8;$
for elementary reasons, this turns out to be nearly parallel. (In fact, $S^7$ also carries a second, ``squashed,'' nearly parallel $\G_2$-structure---see Example \ref{ex:squashed} below and Appendix \ref{appendix:squashed}---but we shall be concerned primarily with the standard structure.)
There are three main reasons why the standard $S^7$ is an especially appealing arena for gauge theory.

First, determining the complete moduli space of instantons on $S^7$
presents a clear challenge for higher-dimensional gauge theory. This problem is at least as difficult as the corresponding problem on $S^4,$ solved by the spectacular theorem of Atiyah-Drinfeld-Hitchin-Manin \cite{adhm}. At present, we are limited to studying the deformations of a given $\G_2$-instanton on $S^7,$ in the spirit of Atiyah-Hitchin-Singer's classic work on deformations of ASD instantons on $S^4$
\cite{ahs}. 

The second motivation comes from another well-known difficulty in higher-dimensional gauge theory: the appearance of 
instantons with essential singularities. Given a $\Spin(7)$-instanton with an isolated singularity on a $\Spin(7)$-manifold,
the cross-section of the tangent cone 
is a $\G_2$-instanton on the standard $S^7.$ 
Hence, these are the ``building blocks'' for the simplest non-removable singularities in dimension eight. 

The third motivation is the relationship with gauge theory in fewer than seven dimensions. 
Since $\G_2$-holonomy metrics typically collapse at the boundary of the moduli space, 
it is important to understand instantons on model $\G_2$-manifolds coming from lower-dimensional geometries. Recently, Y. Wang \cite{yuanqicyproduct} has shown that 
any $\G_2$-instanton on $S^1 \times X,$ for $X$ a Calabi-Yau 3-fold, is equivalent by a broken gauge transformation to the pullback of a Hermitian-Yang-Mills connection. 
This is the first case where the full moduli space of $\G_2$-instantons on a given 7-manifold with holonomy contained in $\G_2$ has been identified.

In the present case, a convenient link with 4-dimensional gauge theory
is provided by the \emph{quaternionic Hopf fibration}
\begin{equation}\label{quathopf}
S^3 \to S^7 \to S^4.
\end{equation}
With the proper conventions, the pullback 
of any anti-self-dual (ASD) instanton on $S^4$
is a $\G_2$-instanton on $S^7.$ This provides an abundant source of examples. The resulting families of $\G_2$-instantons can be further enlarged by the action of the automorphism group $\Spin(7),$ 
which interacts with the Hopf fibration in a nontrivial way.
Our main results 
concern the dimension and completeness properties of these families.

\subsection{Summary}

In \S \ref{sec:conventions}-\ref{sec:instantons}, we set our conventions and derive the basic results concerning instantons on nearly parallel $\G_2$-manifolds, while reviewing the literature in this area.

In \S \ref{sec:quatinst}, we take a concrete approach to the special case of the standard instanton. In quaternionic notation on $\R^8 \cong \H^2,$
the pullback by the (right) Hopf fibration of the standard ASD instanton 
may be given by 
\begin{equation}\label{standardg2instanton}
A_0(x,y) = \frac{1}{|x|^2 + |y|^2} \Im \LB |y|^2  x^{-1} \, dx + |x|^2 y^{-1} \, dy \RB.
\end{equation}
The restriction of $A_0$ to $S^7$ defines a smooth connection, dubbed the \emph{standard $\G_2$-instanton}. Proposition \ref{prop:15deformations} describes
a 15-dimensional space of infinitesimal deformations of $A_0,$ generated by the deformations coming from $S^4$ together with the $\Spin(7)$ rotations of $S^7,$ modulo gauge. 
This will turn out to be the full space of deformations of $A_0$ as a $\G_2$-instanton. 

In \S \ref{sec:generaldefos}, we prove our main result, Theorem \ref{thm:generaldefos}, which determines the space of infinitesimal deformations of the pullback to $S^7$ 
of an arbitrary irreducible ASD instanton on $S^4.$
The argument is elementary, but far from trivial. The difficulty is caused by two factors: first, according to the theorem of Bourguignon-Lawson-Simons \cite{bourguignonlawson,bourguignonlawsonsimons}, the Yang-Mills stability operator necessarily has negative eigenvalues. Second, the $\Sp(1)$-action giving the quaternionic Hopf fibration 
does not commute with the deformation operator (see Remark \ref{rmk:convention} below).


To obtain the result, we first prove a vanishing theorem for the vertical component of an infinitesimal deformation in Coulomb gauge, Theorem \ref{thm:vanishing}. This follows from a delicate analysis of the Weitzenbock formula for the 
stability operator, in which the first-order deformation equation is used crucially. 
Having established that the vertical component vanishes, we use another squaring trick, Lemma \ref{lemma:horiz}, to calculate the horizontal component of an infinitesimal deformation. This leads directly to Theorem \ref{thm:generaldefos}, which equates the full space of infinitesimal deformations of the pullback, 
as a $\G_2$-instanton, 
with three copies of the ASD deformations on $S^4.$ 

In \S \ref{sec:global}, we briefly discuss 
the global structure of these families of $\G_2$-instantons. 
In the case of charge $\kappa = 1$ and structure group $\SU(2),$ which includes the standard instanton, 
we have the following result. 

\begin{thm}\label{thm:grassmann} The connected component of $A_0$ in the moduli space of $\G_2$-instantons on $S^7$ is diffeomorphic to the tautological 5-plane bundle over the oriented real Grassmannian $\G^{or}(5,7).$
\end{thm}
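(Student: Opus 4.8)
The plan is to realize the connected component $\mathcal{M}$ of $A_0$ as an explicit $15$-parameter family of Hopf pullbacks, and then to match it, $\Spin(7)$-equivariantly, with the tautological bundle $E \to \G^{or}(5,7)$. By Theorem~\ref{thm:generaldefos} the space of infinitesimal deformations of any Hopf pullback is three copies of the space of ASD deformations on $S^4$ (the vanishing and horizontal computations of Theorem~\ref{thm:vanishing} and Lemma~\ref{lemma:horiz} being what reduce the count to this); for charge $\kappa = 1$ and structure group $\SU(2)$ the ASD moduli space is $5$-dimensional, so $\dim T_{A_0}\mathcal{M} = 15$, recovering Proposition~\ref{prop:15deformations}. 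I would first observe that these deformations are unobstructed: the $\Spin(7)$-orbit of $A_0$ supplies $10$ of them and the ADHM moduli of the underlying ASD instanton supply the remaining $5$, and these two genuine families meet transversally at $A_0$. Consequently the family $\mathcal{F}$ of all pullbacks $\pi^{*}B$, where $\pi$ ranges over the $\Spin(7)$-orbit of the standard Hopf fibration and $B$ over the charge-one ASD moduli space, is a smooth $15$-manifold whose tangent space at $A_0$ is all of $T_{A_0}\mathcal{M}$.

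Since $\mathcal{F} \subset \mathcal{M}$ has full-dimensional tangent space at every point (by $\Spin(7)$-homogeneity in the base directions together with Theorem~\ref{thm:generaldefos} applied at each pullback), it is open in $\mathcal{M}$; and it is closed, because the ASD moduli space is complete and the orbit of admissible fibrations is compact, so no sequence in $\mathcal{F}$ can limit onto an instanton outside $\mathcal{F}$ while remaining in the smooth component. Being open and closed in the connected component, $\mathcal{F} = \mathcal{M}$, and the assignment $(\pi, B) \mapsto \pi^{*}B$ presents $\mathcal{M}$ as the total space of a fibration $p : \mathcal{M} \to \G^{or}(5,7)$ over the space of admissible Hopf fibrations, with fiber the charge-one ASD moduli space.

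It remains to identify $p$ with the tautological bundle. The base is a single $\Spin(7)$-orbit: acting through $\Spin(7) \to \SO(7)$ on the $7$-dimensional representation $\Lambda^2_7 \subset \Lambda^2(\R^8)^{*}$, the group permutes the admissible fibrations transitively, with the stabilizer $H$ of the standard one equal to the preimage of $\SO(5) \times \SO(2)$; thus the base is $\Spin(7)/H = \G^{or}(5,7)$, a point of which I regard as an oriented $5$-plane $V \subset \Lambda^2_7$. By ADHM~\cite{adhm} the fiber over $V$ is the open $5$-ball, diffeomorphic to $\R^5$, with the round instanton at its center and the standard representation of $\Spin(5) = \Sp(2)$ on the tangent space there. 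The decisive point is that this tangent $\R^5$ carries the \emph{same} $\Spin(5)$-representation as the plane $V$, namely the standard one, and is therefore canonically identified with $V$; hence as $\Spin(7)$ rotates $V$ inside $\Lambda^2_7$ the fiber rotates along with it, which is exactly the tautological bundle. An $\SO(5)$-equivariant radial diffeomorphism of each open ball onto its plane $V$, compatible as $V$ varies, then yields a bundle isomorphism $\mathcal{M} \cong E$.

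The main obstacle is precisely this global step: proving that the resulting rank-$5$ bundle over $\G^{or}(5,7)$ is the \emph{tautological} one and not some other bundle, and that the parametrization is a diffeomorphism rather than merely a fiberwise bijection. The cleanest way to force the correct twisting is through the cohomogeneity-one structure. Both $\mathcal{M}$ and $E$ are cohomogeneity-one $\Spin(7)$-manifolds whose unique singular orbit is the zero section $\G^{or}(5,7) = \Spin(7)/H$, along which the principal orbits (of isotropy the preimage of $\SO(4) \times \SO(2)$) collapse, and whose slice representation at the singular orbit is the standard $\R^5$ of the $\Spin(5) = \Sp(2)$ factor of $H$. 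This pair of data determines the manifold up to $\Spin(7)$-equivariant diffeomorphism, and for the tautological bundle it is exactly the data just described. The technical heart is therefore to verify that the stabilizer of $A_0$ in $\Spin(7)$ is indeed $H$, and that the ASD scaling direction furnishes the normal slice in the manner required; granting this, the identification with $E$ follows.
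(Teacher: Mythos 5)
Your proposal is correct and its skeleton coincides with the paper's proof: both take the family of all pullbacks $\sigma^*\pi^*B$ with $\sigma \in \Spin(7)$ and $B$ in the charge-one ASD moduli space; both get openness in the moduli space by playing the $15$-dimensional deformation count of Theorem \ref{thm:generaldefos} (valid at \emph{every} point of the family, by $\Spin(7)$-equivariance) against the fact that the family itself is a $15$-manifold, with Proposition \ref{prop:15deformations} supplying the rank-$15$ differential; and both identify the base as $\Spin(7)/\Sp(2)\times\U(1) \cong \SO(7)/\SO(5)\times\SO(2) = \G^{or}(5,7)$ with fiber carrying the standard $5$-dimensional representation of $\Sp(2) \cong \Spin(5)$. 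The genuine difference is the order of the bundle identification: the paper builds the tautological bundle \emph{up front}, as the associated bundle $X = Q \times_{\Sp(2)\times\U(1)} V$ over $\G^{or}(5,7)$, so that ``tautological'' holds by construction, and then shows the natural map $X \to \scrA_E/\scrG_E$ is a proper embedding onto a component; you construct the subset $\mathcal{F}\subset\mathcal{M}$ first and recover the bundle structure a posteriori from the slice representation together with a cohomogeneity-one classification. These are equivalent---an equivariant rank-$5$ bundle over $\Spin(7)/H$ is determined by the isotropy representation of $H$ on the fiber---but the paper's ordering buys you the identification for free, with no appeal to any classification theorem, while yours isolates more explicitly \emph{why} the twisting is the tautological one (the coincidence of the slice representation with the standard representation of $\Spin(5)$). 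One justification in your writeup needs repair: closedness of $\mathcal{F}$ does not follow from intrinsic completeness of the ASD moduli space, since a complete immersed submanifold need not be closed in the ambient space. The paper's argument is properness: as $B$ degenerates in the ASD moduli space its curvature blows up, hence so does the curvature of the pullback; so a sequence in $\mathcal{F}$ converging in $\scrA_E/\scrG_E$ forces the $B$'s into a compact set and the $\sigma$'s into the compact group $\Spin(7)$, and the limit subconverges within $\mathcal{F}$. This is the fix your closedness step requires.
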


\noindent 
In this description, the base space 
corresponds to the orbit of the Hopf fibration (\ref{quathopf}) under conjugation by $\Spin(7),$ and the fiber corresponds to the pullback of the unit-charge ASD moduli space. 
The total space is 15-dimensional, agreeing with 
the dimension formula of Theorem \ref{thm:generaldefos}.
By contrast, in the case of higher charge,
we do not expect all of the infinitesimal deformations identified by Theorem \ref{thm:generaldefos}
to be integrable (see \S \ref{ss:discussion} below). 

Lastly, we state Conjecture \ref{conj:donaldson}, due to Donaldson, which asserts that every $\G_2$-instanton on $S^7$ having integral Chern-Simons value should arise from the pullback construction.

\subsection{Acknowledgements} The author thanks Simon Donaldson 
and Thomas Walpuski for discussions, and thanks Aleksander Doan for comments on the manuscript. He also thanks two anonymous referees for helpful suggestions. This research was supported during 2017-18 by the Simons Collaboration on Special Holonomy.

\section{Conventions}\label{sec:conventions} 

\subsection{Quaternions, 2-forms, and Fueter maps}\label{ss:quatsandfueter} Let $\H$ denote the 4-dimensional algebra of quaternions, generated by $\mathbf{1}, \bi, \bj,\bk,$ and subject to the relations
\begin{equation*}
\bi^2 = \bj^2 = \bk^2 = \bi \bj \bk = -\mathbf{1}.
\end{equation*}
The 3-dimensional space of imaginary quaternions $\Im \H,$ with commutator bracket, 
is isomorphic to the Lie algebra $\mathfrak{su}(2).$

We shall identify $\H$ with $\R^4$
as follows:
\begin{equation}\label{quaternionbasis}
\{ e_0, e_1, e_2, e_3 \} = \{ \mathbf{1}, -\bi, -\bj, -\bk\}.
\end{equation}
Denote by $\H_l$ and $\H_r$ the commuting subalgebras of $\mathrm{End} \, \R^4$ corresponding to left- and right-multiplication by $\H,$ respectively, and let $\Sp(1)_l$ and $\Sp(1)_r$ be the corresponding subgroups of $\SO(4)$ generated by unit quaternions.
With this convention, we have
\begin{equation*}
\Lie(\Sp(1)_l) = \Lambda^{2-}, \qquad \Lie(\Sp(1)_r) = \Lambda^{2+}
\end{equation*}
where $\Lambda^{2 \pm} \subset \Lambda^2 \R^4$ denotes the space of (anti-)self-dual 2-forms with respect to the Euclidean metric.

Choose the following standard basis for the self-dual 2-forms on $\R^4:$ 
\begin{equation}\label{2formbases}
 \mathbf{\omega}_1 = dx^{01} + dx^{23}, \qquad \mathbf{\omega}_2 = dx^{02} - dx^{13}, \qquad \mathbf{\omega}_3 = dx^{03} + dx^{12}.
\end{equation}
Here we abbreviate $dx^{01} = dx^0 \wedge dx^1,$ {\it etc.} For $i = 1,2,3,$ define the complex structure $I_i$ on $\R^4$ 
by 
\begin{equation}\label{quaternionaction}
\LA I_i(v), w \RA = \mathbf{\omega}_i(v,w) \quad \forall \,v,w \in T \R^4. 
\end{equation}
Under the identification (\ref{quaternionbasis}), $I_i$ corresponds to right-multiplication by the element $e_i.$ 


We define a \textit{Fueter map} $L : \R^4 \to \R^4$ to be an endomorphism satisfying 
\begin{equation}\label{fueter}
L + I_1 L I_1 + I_2 L I_2 - I_3 L I_3 = 0.
\end{equation}
The 12-dimensional subspace of Fueter maps $\mathfrak{F} \subset \End \, \R^4$ is the direct sum:
\begin{equation}\label{fueterfactors}
\mathfrak{F} = \H_l \oplus \H_l I_1 \oplus \H_l I_2. 
\end{equation}
In particular, $\mathfrak{F}$ contains the space of linear maps for the standard complex structure, $I_1.$ 

\subsection{$\Spin(7)$ and its subgroups}\label{ss:spin7andg2} 
The standard 4-form on $\R^8 = \R^4_x \oplus \R^4_y$ is defined by
\begin{equation}\label{psidef}
\Psi_0 = dx^{0123} + dy^{0123} + \omega^x_1 \wedge \omega^y_1 + \omega^x_2 \wedge \omega^y_2 - \omega^x_3 \wedge \omega^y_3.
\end{equation}
The group $\Spin(7)$ consists of all linear transformations of $\R^8$ that preserve $\Psi_0$ under pullback. It is a simply-connected, simple, Lie subgroup of $\SO(8)$ of dimension 21 (see {\it e.g.} Walpuski and Salamon \cite[\S 9]{walpuskimasters}).

The Lie algebra of $\Spin(7)$ corresponds to the subspace of 2-forms $\eta \in \Lambda^2 \R^8 \cong \so(8)$ 
satisfying
\begin{equation}\label{spin7def}
\eta + \ast \left( \Psi_0 \wedge \eta \right) = 0.
\end{equation}
Let $\Lambda^{2+}_{d} \subset \Lambda^2 \R^8$ be the subalgebra spanned by the three elements 
\begin{equation}\label{omegad123}
\omega_1^x - \omega_1^y, \qquad \omega_2^x - \omega_2^y, \qquad \omega_3^x + \omega_3^y.
\end{equation}
Also denote the subspace
\begin{equation}\label{frakFdef}
\mathfrak{F}_{x,y} = \left\{L_{ij} dy^i \wedge dx^j \mid L \in \mathfrak{F} \right\} \subset \Lambda^2 \R^8.
\end{equation}
Then we have the following decomposition:
\begin{equation}\label{spin7splitting}
\Lie(\Spin(7)) = \Lambda^{2-}_{x} \oplus \Lambda^{2-}_{y} \oplus \Lambda^{2+}_{d} \oplus \mathfrak{F}_{x,y} . 
\end{equation}
One readily checks that each factor of (\ref{spin7splitting}) satisfies (\ref{spin7def}).

Take complex coordinates
$$z^1 = x^0 + i x^1, \quad z^2 = x^2 + i x^3, \quad z^3 = y^0 + i y^1, \quad z^4 = y^2 + i y^3$$
for $\R^8 \cong \C^4.$ 
We have the standard holomorphic volume form and K\"ahler form
\begin{equation}\label{stdkahlerforms}
\Omega = dz^1 \wedge dz^2 \wedge dz^3 \wedge dz^4, \qquad {\bf \omega} = \frac{i}{2} \sum_{i = 1}^4 dz^i \wedge d\bar{z}^i = \omega_1^x + \omega_1^y.
\end{equation}
One may verify that
\begin{equation}\label{psi0kahler}
\Psi_0 =  \frac{1}{2} \omega \wedge \omega + \Re \Omega.
\end{equation} 
The 15-dimensional group $\SU(4)$ 
is therefore a subgroup of $\Spin(7).$ In particular, the 10-dimensional group $\Sp(2)$ of orthogonal quaternionic matrices, linear over $I_1, I_2,$ and $I_3,$ is also a subgroup.

The 14-dimensional Lie group $\G_2$ is the subgroup of $\Spin(7)$ stabilizing a point on $S^7.$ Equivalently, $\G_2$ is the subgroup of $\mathrm{GL}(7)$ that preserves the model 3-form
\begin{equation}\label{phi0}
\phi_0 = \frac{\partial}{\partial x^0} \intprod \Psi_0 = dx^{123} + dx^1 \wedge \omega_1^y + dx^2 \wedge \omega_2^y - dx^3 \wedge \omega_3^y.
\end{equation}
Denote the dual 4-form by
\begin{equation}\label{psi0}
\psi_0 = \ast_{\R^7} \phi_0 = dy^{1234} + dx^{23} \wedge \omega_1^y - dx^{13} \wedge \omega_2^y - dx^{12} \wedge \omega_3^y.
\end{equation}
The Lie algebra $\Lie(\G_2)$ corresponds to the subspace of 2-forms $\xi \in \Lambda^2 \R^7$ satisfying
\begin{equation*}
\xi + * \left( \phi_0 \wedge \xi \right) = 0,
\end{equation*}
or equivalently
\begin{equation*}
\psi_0 \wedge \xi = 0.
\end{equation*}

\begin{rmk}\label{rmk:convention}
The fact that $\Sp(2)\Sp(1)$ is not a subgroup of $\Spin(7)$ causes an essential difficulty. With our convention, $\Sp(2)$ is a subroup, but the block-diagonal $\Sp(1),$ giving the Hopf fibration, is not.
On the other hand, with the convention used for instance by Walpuski \cite{walpuskispin7instantons}, $\Sp(1)$ is a subgroup, but the commuting $\Sp(2)$ is not. Our convention is necessary for Lemma \ref{lemma:instapullback} below. 


\end{rmk}

\section{Instantons on nearly parallel $\G_2$-manifolds}\label{sec:instantons}

In this section, we establish the basic facts about instantons on nearly parallel $\G_2$-manifolds. Most of the results are known to researchers informally or by analogy with the nearly K\"ahler case (see Xu \cite{xunearlykahler}), but some (in particular Proposition \ref{prop:weitz}) have not appeared 
in their present form. For the spinorial formulation, see Harland-N\"olle \cite{harlandnolle}.

\subsection{Nearly parallel $\G_2$-structures}\label{ss:npg2} Let $M$ be an oriented 7-manifold. Recall that a \emph{$\G_2$-structure} on $M$ is defined by a global 3-form $\phi$ that is \emph{positive}, in the sense that
\begin{equation}
G_\phi(v) = \left( v \intprod \phi \right) \wedge \left( v \intprod \phi \right) \wedge \phi > 0
\end{equation}
for all $x \in M$ and $v \neq 0 \in T_x M.$ Any such $\phi$ is pointwise equivalent to the model 3-form $\phi_0,$ given by (\ref{phi0}) above (see \cite[Theorem 3.2]{walpuskimasters}).

A positive 3-form defines a unique Riemannian metric $g_\phi$ on $M$ by the requirement
\begin{equation}\label{g2metric}
6 g_\phi(v,v)  Vol_{g_\phi} = G_\phi(v) \,\, \forall \,\, v \in TM. 
\end{equation}
We also associate to $\phi$ the dual 4-form
\begin{equation}
\psi = \ast_{g_{\phi}} \phi.
\end{equation}
Recall that a $\G_2$-structure is said to be \emph{closed} if $d \phi = 0,$ and \emph{coclosed} if $d \psi = 0.$

We are concerned with $\G_2$-structures satisfying (\ref{nearlypar}), where we assume
\begin{equation}\label{pm4}
\tau_0 = \pm 4.
\end{equation}
The basic reference for nearly parallel structures is Friedrich-Kath-Moroianu-Semmelmann \cite{friedrichkathmoroianusemmelmann}. With the normalization (\ref{pm4}), nearly parallel $\G_2$-manifolds are Einstein, with 
\begin{equation}
\mathrm{Ric}_g = 6 g.
\end{equation}
There are three further equivalent formulations of the nearly parallel condition (\ref{nearlypar}).
The first is that the induced $\Spin(7)$-structure on the cone over $M$ be torsion-free. The second (and most frequently used) condition 
is that $M$ possess a nonzero Killing spinor. 
The third is as follows: 
\begin{lemma}\label{lemma:gradphi} A $\G_2$-structure $\phi$ is nearly parallel if and only if
\begin{equation*}
\nabla \phi = \frac{\tau_0}{4} \psi.
\end{equation*}
Here $\nabla$ is the Levi-Civita connection associated to the metric $g_\phi$ defined by (\ref{g2metric}).
\end{lemma}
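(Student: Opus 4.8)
The plan is to read the asserted identity $\nabla\phi = \frac{\tau_0}{4}\psi$ as the pointwise statement $\nabla_X\phi = \frac{\tau_0}{4}\,X\intprod\psi$ for every vector field $X$, using the canonical inclusion of the $4$-form $\psi$ into $T^*M\otimes\Lambda^3 T^*M$ via $X\mapsto X\intprod\psi$. The starting observation is representation-theoretic: since the Levi-Civita connection is metric, the connection form is $\mathfrak{so}(7)$-valued, so each $\nabla_X\phi$ lies in the image of the infinitesimal action $\mathfrak{so}(7)\to\Lambda^3$, $A\mapsto A\cdot\phi$. This action kills $\mathfrak{g}_2$ and has image exactly the $7$-dimensional component $\Lambda^3_7 = \{X\intprod\psi : X\in TM\}$. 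Hence $\nabla\phi$ is equivalent data to a single endomorphism $T\in T^*M\otimes TM$, the full torsion tensor, defined by $\nabla_X\phi = (TX)\intprod\psi$, and proving the lemma amounts to identifying $T$.

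For the easy (``if'') direction I would antisymmetrize. Because $\nabla$ is torsion-free, $d\phi = \sum_i e^i\wedge\nabla_{e_i}\phi$ in any local orthonormal coframe $\{e^i\}$. Substituting $\nabla_{e_i}\phi = \frac{\tau_0}{4}\,e_i\intprod\psi$ and using the elementary identity $\sum_i e^i\wedge(e_i\intprod\psi) = 4\psi$, valid for any $4$-form, yields $d\phi = \tau_0\psi$, so $\phi$ is nearly parallel. This same computation pins the constant: if $T$ is a pure multiple $c\,\mathrm{Id}$ of the identity, then $d\phi = 4c\,\psi$, forcing $c = \tau_0/4$.

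The substance is the converse. Here I would decompose the torsion tensor $T\in T^*M\otimes TM\cong T^*M\otimes T^*M$ into its four irreducible $\G_2$-components, the Fern\'andez--Gray classes $\tau_0\in\R$, $\tau_1\in\Lambda^1$, $\tau_2\in\Lambda^2_{14}$, $\tau_3\in S^2_0$, and invoke the standard structure identities expressing the exterior derivatives as the skew parts of $\nabla\phi$, schematically $d\phi = \tau_0\psi + 3\,\tau_1\wedge\phi + *\tau_3$ and $d\psi = 4\,\tau_1\wedge\psi + \tau_2\wedge\phi$ (the precise constants depend on the normalization of the $\tau_i$). The nearly parallel hypothesis $d\phi = \tau_0\psi$ with $\tau_0$ constant gives $d\psi = \tau_0^{-1}\,d(d\phi) = 0$ for free. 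Matching the mutually orthogonal $\G_2$-components of $d\phi$ then kills $\tau_1$ and $\tau_3$, and the vanishing of $d\psi$ together with the injectivity of $\wedge\phi\colon\Lambda^2_{14}\to\Lambda^5$ kills $\tau_2$. What remains is $T = \frac{\tau_0}{4}\,\mathrm{Id}$, that is, $\nabla_X\phi = \frac{\tau_0}{4}\,X\intprod\psi$, as claimed.

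The main obstacle is not conceptual but book-keeping: establishing the two structure identities with their exact constants in the nonstandard quaternionic conventions of \S\ref{ss:spin7andg2}, and verifying the representation-theoretic inputs (that $\nabla_X\phi\in\Lambda^3_7$, and that each torsion class contributes nontrivially to $d\phi$ or $d\psi$, so that the vanishing of $d\phi-\tau_0\psi$ and of $d\psi$ is actually detected on each component). I expect to handle this either by citing the standard references, e.g.\ \cite{friedrichkathmoroianusemmelmann}, for the general decomposition and then fixing the single overall constant by the antisymmetrization computation above, or, if a self-contained argument is preferred, by evaluating both sides of $\nabla_X\phi = \frac{\tau_0}{4}\,X\intprod\psi$ against the explicit model forms $\phi_0,\psi_0$ of (\ref{phi0})--(\ref{psi0}) at a point and propagating by $\G_2$-equivariance.
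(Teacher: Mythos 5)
Your proposal is correct and is essentially the paper's own route: the paper's proof consists of the single citation to Karigiannis' Theorem 2.27, which is exactly the torsion-decomposition statement you reconstruct (the full torsion tensor $T$ defined by $\nabla_X\phi = (TX)\intprod\psi$ is expressed in terms of the four torsion forms, so $d\phi = \tau_0\psi$ and the consequent $d\psi = 0$ force $T = \frac{\tau_0}{4}\,\mathrm{Id}$). In other words, you have unpacked the content of the cited theorem, deferring the same book-keeping (the structure-equation constants in the chosen conventions) to the literature that the paper defers wholesale.
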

\begin{proof} See Karigiannis \cite[Theorem 2.27]{karigiannisflowsofg2}.
\end{proof}

\begin{example} Define the \emph{standard $\G_2$-structure on $S^7$} by
\begin{equation}\label{phistdzerothdef}
\phi_{std} = \left. \vec{r} \intprod \Psi_0 \right|_{S^7},
\end{equation}
where
$$\vec{r} = x^i \frac{\partial}{\partial x^i} + y^i \frac{\partial}{\partial y^i}$$
is the coordinate vector field on $\R^8.$
The group of global automorphisms of $\phi_{std}$ is $\Spin(7).$

To obtain a more explicit expression for $\phi_{std},$ we define the 3-form
\begin{equation*}
\nu^x = \vec{r} \intprod \mathrm{Vol}_{\R^4_x}= x^0 dx^{123} - x^1 dx^{023} + x^2 dx^{013} - x^3 dx^{012}
\end{equation*}
on $\R^4_x,$ and the 1-forms
\begin{equation}\label{zetaidef}
\begin{split}
\zeta_1^x &= \vec{r}\intprod \omega_1^x = x^0 dx^1 - x^1 dx^0 + x^2 dx^3 - x^3 dx^2 \\
\zeta_2^x &= \vec{r} \intprod \omega_2^x = x^0 dx^2 - x^2 dx^0 - x^1 dx^3 + x^3 dx^1 \\
\zeta_3^x & = \vec{r} \intprod \omega_3^x = x^0 dx^3 - x^3 dx^0 + x^1 dx^2 - x^2 dx^1.
\end{split}
\end{equation}
Define $\nu^y$ and $\zeta_i^y$ similarly. We then have
\begin{equation}\label{phistdfirstdef}
\begin{split}
\phi_{std} 
& = \nu^x + \nu^y + \zeta_1^x \wedge \omega_1^y + \zeta_1^y \wedge \omega_1^x + \zeta_2^x \wedge \omega_2^y + \zeta_2^y \wedge \omega_2^x - \zeta_3^x \wedge \omega_3^y - \zeta_3^y \wedge \omega_3^x.
\end{split}
\end{equation}
It is easy to check, using the $\Spin(7)$-invariance, that $\phi_{std}$ defines the round metric 
on $S^7.$

Notice that $d \nu^x = 4 {\mathrm{Vol}}_{\R^4_x}$ and $d \zeta_i^x = 2 \omega_i^x,$ and similarly for $y,$ so 
\begin{equation}\label{phiPsi0}
d \phi_{std} = \left. 4 \Psi_0 \right|_{S^7}.
\end{equation}
Also note that
\begin{equation}\label{psistd}
\psi_{std} = \ast_{std} \phi_{std} = \left. \ast_{\R^8} \Psi_0 \right|_{S^7} = \left. \Psi_0 \right|_{S^7}.
\end{equation}
It follows from (\ref{phiPsi0}-\ref{psistd}) that $\phi_{std}$ is a nearly parallel $\G_2$-structure.

\end{example}
\begin{example}\label{ex:squashed} Define the \emph{squashed $\G_2$-structure}
\begin{equation*}
\phi_{sq} = \frac{27}{25} \left( \begin{split} \frac{1}{5} (\nu_x + \nu_y) + \frac{16}{5} \left( \begin{split}\zeta_1^x \wedge \zeta_2^x \wedge \zeta_3^y + \zeta_1^x \wedge \zeta_2^y \wedge \zeta_3^x + \zeta_1^y \wedge \zeta_2^x \wedge \zeta_3^x \\ + \zeta_1^x \wedge \zeta_2^y \wedge \zeta_3^y + \zeta_1^y \wedge \zeta_2^x \wedge \zeta_3^y + \zeta_1^y \wedge \zeta_2^y \wedge \zeta_3^x  \end{split} \right) \\
- \zeta^x_1 \wedge \omega^y_1 - \zeta_1^y \wedge \omega_1^x - \zeta^x_2 \wedge \omega^y_2 - \zeta_2^y \wedge \omega_2^x - \zeta^x_3 \wedge \omega^y_3 - \zeta_3^y \wedge \omega_3^x \end{split}\right).
\end{equation*}
The squashed $\G_2$-structure was discovered by Awada, Duff, and Pope \cite{awadaduffpope}, and has automorphism group $\Sp(2) \Sp(1).$ 
Appendix \ref{appendix:squashed} includes a proof that $\phi_{sq}$ is nearly parallel.
\end{example}

\begin{rmk}
Alexandrov and Semmelmann \cite{alexandrovsemmelmannnearlyparallel} have shown that both the standard and the squashed $\G_2$-structures are rigid among nearly parallel $\G_2$-structures. These remain the only known nearly parallel structures on the 7-sphere. 
We also note that the (non-)existence of a \emph{closed} 
$\G_2$-structure on the 7-sphere is a well-known open problem.
\end{rmk}
 
\begin{rmk} 
The presence of the two distinct nearly parallel $\G_2$-structures on $S^7$ can be attributed to its 3-Sasakian structure (generated by $I_1, I_2,$ and $I_3$). Any 3-Sasakian 7-manifold carries two non-isomorphic $\G_2$-structures, one with standard fibers and one with ``squashed'' fibers; see Friedrich {\it et al.} \cite[Theorem 5.4]{friedrichkathmoroianusemmelmann} or Galicki-Salamon \cite[Proposition 2.4]{galickisalamon3sasaki}. Examples of 3-Sasakian 7-manifolds were constructed in abundance by Boyer, Galicki, Mann, and Rees \cite{boyergalickimannrees3sasaki}, giving many compact inhomogeneous nearly parallel $\G_2$-manifolds as a byproduct.
\end{rmk}

\subsection{$\G_2$-instantons} Recall that a connection $A$ is called a \emph{$\G_2$-instanton} if its curvature satisfies (\ref{instantondef1}),
or equivalently
\begin{equation}\label{instantondef2}
\psi \wedge F_A = 0.
\end{equation}
If the $\G_2$-structure $\phi$ is nearly parallel, then from (\ref{instantondef1}) and (\ref{nearlypar}), we have
\begin{equation}\label{instareym}
\begin{split}
 0 & = D_A^*F_A - * D_A \left( \phi \wedge F_A \right) \\
 & = D_A^*F_A - * \left( \tau_0 \psi \wedge F_A -\phi \wedge D_A F_A \right) \\
 & = D_A^*F_A.
 \end{split}
 \end{equation}
We have used (\ref{instantondef2}) and the Bianchi identity in the last line. Hence, in the nearly-parallel case, any $\G_2$-instanton is Yang-Mills. 
This observation goes back to Harland and N\"olle \cite{harlandnolle}.

The linearization of (\ref{instantondef2}) is
$$\psi \wedge D_A \alpha = 0,$$
for $\alpha \in \Omega^1\left( \gothg_E \right).$ Meanwhile, an infinitesimal gauge transformation  $u \in \Omega^0 \left( \gothg_E \right)$ acts by
$$u \mapsto D_A u.$$
The infinitesimal deformations of a $\G_2$-instanton $A,$ modulo gauge, therefore correspond to the first cohomology group of the following self-dual elliptic complex:
\begin{equation}\label{deformationcomplex}
\Omega^0 \left( \gothg_E \right) \stackrel{D_A}{\longrightarrow} \Omega^1 \left( \gothg_E \right) \stackrel{ \psi \wedge D_A }{\longrightarrow} \Omega^6 \left( \gothg_E \right) \stackrel{D_A}{\longrightarrow} \Omega^7 \left( \gothg_E \right).
\end{equation}
Folding (\ref{deformationcomplex}) and writing $d = D_A,$ we obtain the \emph{deformation operator}
\begin{equation}\label{deformationoperator}
\begin{split}
\scrL_A & :  \Omega^0\left( \gothg_E \right) \oplus \Omega^1 \left( \gothg_E \right) \to \Omega^0\left( \gothg_E \right) \oplus \Omega^1 \left( \gothg_E \right) \\
& \begin{pmatrix}
u \\
\alpha
\end{pmatrix}
\mapsto
\begin{pmatrix}
d^*\alpha \\
du + * \left( \psi \wedge d\alpha \right)
\end{pmatrix}.
\end{split}
\end{equation}
This is a first-order, self-adjoint, elliptic operator.

Squaring (\ref{deformationoperator}), we obtain
\begin{equation}\label{lausquared}
\begin{split}
\scrL_A^2  \begin{pmatrix}
u \\
\alpha
\end{pmatrix} & = 
\begin{pmatrix}
d^* d u - *\left( d \psi \wedge d \alpha + \psi \wedge F_A \wedge \alpha \right) \\
d d^* \alpha + * \left( \psi \wedge d \left( * \left( \psi \wedge d \alpha \right) + F_A \wedge u \right) \right)
\end{pmatrix} \\
& = \begin{pmatrix}
d^* d u \\
d d^*\alpha + * \left( \psi \wedge d * \left( \psi \wedge d\alpha \right) \right)
\end{pmatrix}.
\end{split}
\end{equation}
Over a compact manifold, integration by parts implies
\begin{equation*}
\ker \mathscr{L}_A = \ker \mathscr{L}_A^2.
\end{equation*}
Hence, $du \equiv 0$ for any infinitesimal deformation on a compact nearly parallel $\G_2$-manifold, and $u \equiv 0$ if $A$ is irreducible.

We shall use the following interior product notation:
\begin{equation}\label{2formaction}
\begin{split}
 dx^{01} \rintprod \frac{\partial}{\partial x^1} = dx^0, \qquad dx^{01} \rintprod \frac{\partial}{\partial x^0}  = - dx^1, \quad  \text{{\it etc.}}
\end{split}
\end{equation}
We also take interior products between differential forms, {\it e.g.} 
$$dx^{01} \rintprod dx^1 = dx^0, \qquad dx^{01} \rintprod dx^{01} = 1,$$
and similarly in general using the metric.
In particular, for a 2-form $b,$ we have
$$* \left( \psi \wedge b \right) = \phi \rintprod b.$$
For a $\gothg_E$-valued 1-form $\alpha,$ we shall write
\begin{equation*}
\scrL_A(\alpha) = \scrL_A \begin{pmatrix}
0 \\
\alpha
\end{pmatrix} = \begin{pmatrix}
d^*\alpha \\
\phi \rintprod d\alpha
\end{pmatrix}.
\end{equation*}
Then (\ref{lausquared}) becomes
\begin{equation}\label{lasquaredfirst}
\scrL_A^2(\alpha) = d d^*\alpha + \phi \rintprod d \left( \phi \rintprod d \alpha \right).\end{equation}

\begin{lemma}[{\cite[(2.7-2.8)]{bryantlaplacianremarks}}]\label{lemma:phiidents} The following identities hold between any positive 3-form $\phi,$ the associated metric $g = g_\phi,$ and the dual 4-form $\psi = *_{\phi} \phi:$
\begin{equation*}
g^{pq} \phi_{p ij} \phi_{q k \ell} = g_{i k} g_{j\ell} - g_{i\ell} g_{j k} + \psi_{i j k \ell}
\end{equation*}
\begin{equation*}
g^{pq} g^{\ell m} \phi_{p\ell i} \psi_{qm jk} = 2 \phi_{ijk}.
\end{equation*}
\end{lemma}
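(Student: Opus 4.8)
The plan is to recognize that both displayed formulas are \emph{pointwise, purely algebraic} identities relating the tensors $\phi$, $g=g_\phi$, and $\psi=*_\phi\phi$ at a single point, every ingredient of which is built from $\phi$ alone by $\GL(7)$-natural operations. Consequently it suffices to verify each identity for one representative positive 3-form, for which I would take the model $\phi_0$ of (\ref{phi0}), together with $g_0$ the Euclidean metric and $\psi_0$ of (\ref{psi0}).

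First I would justify this reduction. Fix a point and identify the tangent space with $\R^7$. By the normal-form result quoted above (\cite{walpuskimasters}, Theorem 3.2), any positive 3-form $\phi$ equals $L^*\phi_0$ for some $L\in\GL_+(7)$. The associated data transform naturally: the defining relation (\ref{g2metric}) is $\GL$-equivariant, so $g_\phi=L^*g_0$, whence the Hodge star and the dual 4-form transform accordingly, giving $\psi_\phi=L^*\psi_0$. Both sides of each identity are obtained from $(\phi,g,\psi)$ by tensor products and metric contractions, operations that commute with pullback by $L$, so the identity for $\phi$ is precisely the $L$-pullback of the identity for $\phi_0$. Thus it holds for every positive $\phi$ as soon as it holds for $\phi_0$.

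It then remains to verify the two identities for $\phi_0,\psi_0$ with $g_{ij}=\delta_{ij}$, where they become the finite combinatorial checks $\sum_p \phi_{pij}\phi_{pkl}=\delta_{ik}\delta_{jl}-\delta_{il}\delta_{jk}+\psi_{ijkl}$ and $\sum_{p,l}\phi_{pli}\psi_{pljk}=2\phi_{ijk}$, using the nonzero components read off from (\ref{phi0}) and (\ref{psi0}). Rather than run through all index configurations, I would exploit $\G_2$-invariance. Both sides of the first identity are $\G_2$-equivariant symmetric bilinear forms on $\Lambda^2\R^7=\Lambda^2_7\oplus\Lambda^2_{14}$, so by Schur's lemma (the two summands being non-isomorphic irreducibles) they are determined by their scalar eigenvalues on each summand, reducing the check to evaluating one representative 2-form in each. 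For the second identity, the left side is a $\G_2$-invariant 3-tensor; since the space of such is one-dimensional and spanned by $\phi$, it must be a multiple of $\phi$, and a single component (say $ijk=1,2,3$ in the labels of (\ref{phi0})) pins the constant to $2$.

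The genuinely delicate point is the equivariance/naturality step, since $g_\phi$ depends nonlinearly on $\phi$ through (\ref{g2metric}); one must confirm that the nonlinear assignment $\phi\mapsto g_\phi$, and the induced $\phi\mapsto\psi_\phi$, really do intertwine the $\GL_+(7)$-actions, so that the single-point verification is legitimate. Once that is secured, the remaining model computation is routine bookkeeping—or, with the representation-theoretic shortcut, a pair of one-line scalar checks—and the identity of \cite{bryantlaplacianremarks} follows.
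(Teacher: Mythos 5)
Your proposal is correct and follows essentially the same route as the paper: both reduce the identities to a single model point $(\phi_0, g_0, \psi_0)$ on the grounds that they are zeroth-order, natural (pointwise algebraic) identities, and then invoke $\G_2$-invariance to avoid a brute-force index check. The only difference is in the finishing move --- the paper uses transitivity of $\G_2$ on orthonormal pairs of vectors, while you use Schur's lemma on $\Lambda^2_7 \oplus \Lambda^2_{14}$ and the one-dimensionality of the space of invariant 3-tensors --- both are valid shortcuts, and your explicit justification of the $\GL_+(7)$-equivariance of $\phi \mapsto g_\phi$ spells out what the paper compresses into the phrase ``zeroth-order identities.''
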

\begin{proof} Since these are zeroth-order identities, it suffices to check them for the standard 3- and 4-form, given by (\ref{phi0}-\ref{psi0}), and the standard metric. This is easily accomplished using the fact that $\G_2$ acts transitively on orthonormal pairs of vectors.
\end{proof}

\begin{lemma}[{\it Cf.} {\cite[Lemma 7.1]{yuanqideformations}}]\label{lemma:dbstar}
For a nearly parallel $G_2$-structure and any 2-form $b,$ there holds
\begin{equation}\label{dbstar:maineq}
d\left( b \intprod \phi \right) \intprod \phi = d^*b - db \intprod \psi + \frac{\tau_0}{2} b \intprod \phi.
\end{equation}
\end{lemma}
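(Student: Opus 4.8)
The identity is first-order in $b$ but zeroth-order in the $\G_2$-structure, so the natural strategy is a direct computation in a local orthonormal frame, converting every term to index notation and reducing the quadratic contractions in $\phi$ and $\psi$ by means of Lemma \ref{lemma:phiidents}. The plan is to fix conventions so that, writing all indices down and summing over repeated ones, $(b \intprod \phi)_m = \tfrac12 \phi_{ijm} b_{ij}$, $(d^\ast b)_m = -\nabla_j b_{jm}$, $(db)_{ijk} = \nabla_i b_{jk} + \nabla_j b_{ki} + \nabla_k b_{ij}$, and $(db \intprod \psi)_m = \tfrac16 (db)_{ijk}\psi_{ijkm}$, matching the normalization $dx^{01}\rintprod dx^{01} = 1$ used in the text. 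I would then set $c = b \intprod \phi$, expand $(dc)_{kl} = \nabla_k c_l - \nabla_l c_k$ by Leibniz, contract once more against $\phi$ via $(dc \intprod \phi)_m = \tfrac12 \phi_{klm}(dc)_{kl}$, and sort the resulting terms into those carrying a factor of $\nabla\phi$ and those carrying $\nabla b$.

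The decisive input is Lemma \ref{lemma:gradphi}, which allows me to replace $\nabla_k \phi_{ijl}$ by $\tfrac{\tau_0}{4}\psi_{kijl}$ wherever the derivative falls on $\phi$. The antisymmetrization in $dc$ doubles this term; after reindexing $\psi_{kijl} = \psi_{klij}$ and contracting against the outer $\phi$, the second identity of Lemma \ref{lemma:phiidents} (which collapses $\phi_{\bullet\bullet\bullet}\psi_{\bullet\bullet\bullet\bullet}$ with two contracted indices back to $2\phi$) yields $\tfrac{\tau_0}{4}\phi_{mij}b_{ij} = \tfrac{\tau_0}{2}(b\intprod\phi)_m$, accounting for the third term on the right-hand side.

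For the remaining terms, where the derivative falls on $b$, I would contract the two surviving copies of $\phi$ using the first identity of Lemma \ref{lemma:phiidents}, $\phi_{pij}\phi_{pkl} = g_{ik}g_{jl} - g_{il}g_{jk} + \psi_{ijkl}$. The metric part produces, after invoking the antisymmetry of $b$, two contributions of $\tfrac12(d^\ast b)_m$ each, which sum to $(d^\ast b)_m$; the $\psi$ part, once the cyclic structure of $db$ is recognized, recombines into $-(db\intprod\psi)_m$, using that all three cyclic summands of $db$ contract identically against the totally antisymmetric $\psi$. Assembling the three groups reproduces $d^\ast b - db\intprod\psi + \tfrac{\tau_0}{2} b\intprod\phi$.

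The main obstacle is purely combinatorial: keeping track of the permutation signs of the four-index tensor $\psi$ under the repeated reorderings (e.g.\ $\psi_{kmij}=\psi_{ijkm}$, $\psi_{lijk}=-\psi_{kijl}$), together with the $\tfrac{1}{p!}$ normalizations of the interior products and the sign convention for $d^\ast$. A second, conceptual point worth flagging is that, unlike the zeroth-order identities of Lemma \ref{lemma:phiidents}, this identity \emph{cannot} be checked by passing to the flat model $(\R^7,\phi_0)$, since there $\nabla\phi_0\equiv 0$ whereas $\tau_0\neq 0$; the torsion term $\nabla\phi=\tfrac{\tau_0}{4}\psi$ must be inserted symbolically, and it is precisely this term that produces the $\tfrac{\tau_0}{2}\,b\intprod\phi$ correction distinguishing the nearly parallel case from the torsion-free one.
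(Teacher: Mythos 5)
Your proposal is correct and follows essentially the same route as the paper's proof: a normal-coordinate index computation expanding $d(b \intprod \phi) \intprod \phi$ by Leibniz, substituting $\nabla\phi = \tfrac{\tau_0}{4}\psi$ via Lemma \ref{lemma:gradphi} for the torsion term, and reducing the remaining contractions with both identities of Lemma \ref{lemma:phiidents} to produce $d^*b$, $-db\intprod\psi$, and $\tfrac{\tau_0}{2}\,b\intprod\phi$ respectively. Your closing observation—that this identity, unlike the zeroth-order ones, cannot be verified in the flat model since $\nabla\phi_0 \equiv 0$ there—is a correct and worthwhile point of emphasis.
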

\begin{proof} 
Write $b = \frac12 b_{ij} dx^i \wedge dx^j$ in normal coordinates. We then have
\begin{equation*}
\begin{split}
\left( d \left( b \intprod \phi \right) \intprod \phi \right)_k & = \frac12 \left( \frac12 \left( \nabla_m (b_{ij} \phi_{ij\ell}) - \nabla_\ell (b_{ij} \phi_{ijm}) \right) \right) \phi_{m \ell k} \\
& = \frac14 \left( \nabla_m b_{ij} \phi_{ij \ell} + b_{ij} \nabla_m \phi_{ij\ell} - \nabla_\ell b_{ij} \phi_{ijm} - b_{ij} \nabla_{\ell} \phi_{ijm} \right) \phi_{m \ell k} \\
& = - \frac12 \nabla_m b_{ij} \phi_{\ell ij} \phi_{\ell mk} + \frac{\tau_0}{8} b_{ij} \psi_{m ij \ell} \phi_{m \ell k},
\end{split}
\end{equation*}
where we have used Lemma \ref{lemma:gradphi} in the last line.
By Lemma \ref{lemma:phiidents}, this becomes
\begin{equation*}
\begin{split}
\left( d \left( b \intprod \phi \right) \intprod \phi \right)_k & = - \frac12 \nabla_m b_{ij} \left( g_{im} g_{jk} - g_{ik} g_{jm} + \psi_{ijmk} \right) + \frac{\tau_0}{4} b_{ij} \phi_{ijk} \\
& = - \nabla_i b_{ik} - \frac12 \nabla_m b_{ij} \psi_{mijk} + \frac{\tau_0}{4} b_{ij} \phi_{ijk},
\end{split}
\end{equation*}
which agrees with the expression (\ref{dbstar:maineq}).
\end{proof}

\begin{prop}\label{prop:weitz} For a $\G_2$-instanton with respect to a nearly parallel $\G_2$-structure $\phi,$ we have
\begin{equation}\label{weitwithsa}
\begin{split}
\mathscr{L}_A^2(\alpha) 
& = \frac{\tau_0}{2} \phi \rintprod d \alpha + \mathscr{S}_A \left( \alpha \right),
\end{split}
\end{equation}
where
$$\mathscr{S}_A\left( \alpha \right) = \nabla_A^*\nabla_A \alpha + \Ric(\alpha) - 2 \LB F_A \rintprod \alpha \RB$$
is the Yang-Mills stability operator (see Bourguignon-Lawson \cite{bourguignonlawson}).
In the case of the round $7$-sphere, we have
\begin{equation}\label{LAsquared}
\begin{split}
\mathscr{L}_A^2(\alpha ) & = 2 \phi \rintprod d \alpha + \nabla_A^*\nabla_A \alpha + 6 \alpha - 2 \LB F_A \rintprod \alpha \RB .
\end{split}
\end{equation}
\end{prop}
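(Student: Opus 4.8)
The plan is to reduce everything to the two pointwise $\G_2$-identities governing the curvature term, after using Lemma \ref{lemma:dbstar} to convert the second-order operator $\phi \rintprod d(\phi \rintprod d\alpha)$ into a coupled Hodge Laplacian plus lower-order pieces. First I would start from the folded expression (\ref{lasquaredfirst}),
\[
\scrL_A^2(\alpha) = dd^*\alpha + \phi \rintprod d\left( \phi \rintprod d\alpha \right),
\]
and apply Lemma \ref{lemma:dbstar} to the $\gothg_E$-valued $2$-form $b = d\alpha = D_A \alpha$, whose associated $1$-form $b \intprod \phi$ is precisely the inner factor $\phi \rintprod d\alpha$. (That lemma is a first-order identity proved in normal coordinates using $\nabla$ only; it holds verbatim for bundle-valued forms upon replacing $\nabla$ by $\nabla_A$, since its derivation never invokes $d^2 = 0$.) This yields
\[
\phi \rintprod d\left( \phi \rintprod d\alpha \right) = d^* d\alpha - (D_A d\alpha) \intprod \psi + \frac{\tau_0}{2}\, \phi \rintprod d\alpha,
\]
and the decisive point is that $D_A d\alpha = D_A^2 \alpha = [F_A \wedge \alpha]$ is exactly the curvature term. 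Adding $dd^*\alpha$ assembles the full coupled Hodge Laplacian $\Delta_A = dd^* + d^*d$, so that
\[
\scrL_A^2(\alpha) = \frac{\tau_0}{2}\, \phi \rintprod d\alpha + \Delta_A \alpha - [F_A \wedge \alpha] \intprod \psi.
\]

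Next I would invoke the Bochner-Weitzenbock formula for the coupled Laplacian on $1$-forms. Commuting the covariant derivatives in $\Delta_A \alpha = dd^*\alpha + d^* d\alpha$ produces a Riemann-curvature contraction, which is the Ricci term, together with a bundle-curvature contraction; the standard bookkeeping gives
\[
\Delta_A \alpha = \nabla_A^* \nabla_A \alpha + \Ric(\alpha) - [F_A \rintprod \alpha].
\]
Substituting this in, the proposition reduces to the single pointwise (zeroth-order) identity $[F_A \wedge \alpha] \intprod \psi = [F_A \rintprod \alpha]$, valid whenever $F_A$ is a $\G_2$-instanton: in that case the $-[F_A \rintprod \alpha]$ coming from Weitzenbock and the $-[F_A \wedge \alpha] \intprod \psi$ coming from the first step combine to the $-2[F_A \rintprod \alpha]$ of the stability operator $\scrS_A$.

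Establishing this last identity is the main obstacle, and it is exactly where the instanton hypothesis is essential. Expanding $[F_A \wedge \alpha] \intprod \psi$ in an orthonormal frame produces a $1$-form whose $\ell$-component is proportional to $[(F_A)_{ij}, \alpha_k]\, \psi_{ijk\ell}$; contracting the two indices of $F_A$ against $\psi$ is the operator $b \mapsto \tfrac12 b_{ij} \psi_{ij\,\cdot\,\cdot}$ on $2$-forms. On the $14$-dimensional summand $\Lambda^2_{14} = \Lie(\G_2)$ this operator acts as a definite scalar, and the instanton condition $\psi \wedge F_A = 0$ places $F_A$ precisely in $\Lambda^2_{14}$; feeding in the eigenvalue collapses the contraction to $[F_A \rintprod \alpha]$. (For a connection that is not an instanton, the $\Lambda^2_7$-component of $F_A$ would contribute a different eigenvalue and no such clean reduction occurs, which is why the statement is restricted to instantons.) Being zeroth-order, the identity can be verified directly for the model forms $\phi_0, \psi_0$ of (\ref{phi0}--\ref{psi0}) together with an arbitrary element of $\Lie(\G_2)$, invoking the transitivity of $\G_2$ exactly as in the proof of Lemma \ref{lemma:phiidents}; the required constant can alternatively be read off from the second contraction identity of that lemma. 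This proves (\ref{weitwithsa}).

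Finally, for the round $S^7$ I would specialize the two geometric constants. With the standard $\G_2$-structure one has $\tau_0 = 4$, so that $\tfrac{\tau_0}{2} \phi \rintprod d\alpha = 2\, \phi \rintprod d\alpha$, and $\Ric_g = 6 g$, so that $\Ric(\alpha) = 6\alpha$; substituting both into (\ref{weitwithsa}) gives (\ref{LAsquared}). I expect the algebra of the first two paragraphs to be routine given Lemmas \ref{lemma:dbstar} and \ref{lemma:phiidents}, with essentially all of the content concentrated in the instanton identity of the third paragraph.
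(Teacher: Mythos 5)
Your proposal is correct and follows essentially the same route as the paper: starting from (\ref{lasquaredfirst}), applying Lemma \ref{lemma:dbstar} to $b = d\alpha$, identifying $d^2\alpha = [F_A \wedge \alpha]$, invoking the coupled Bochner formula $\Delta_A \alpha = \nabla_A^*\nabla_A\alpha + \Ric(\alpha) - [F_A \rintprod \alpha]$, and using the instanton condition to evaluate the curvature contraction---your $\Lambda^2_{14}$-eigenvalue argument is exactly the paper's step $F_A \intprod \psi = \ast(F_A \wedge \phi) = -F_A$. The only cosmetic difference is that the paper splits your single identity $[F_A \wedge \alpha]\intprod\psi = [F_A \rintprod \alpha]$ into the purely algebraic rearrangement $-\left(F_A \wedge \alpha\right)\intprod\psi = \left(F_A \intprod \psi\right)\rintprod\alpha$ followed by that eigenvalue fact.
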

\begin{proof} From (\ref{lasquaredfirst}) and Lemma \ref{lemma:dbstar}, we have
\begin{equation}\label{weitzmess}
\begin{split}
\scrL_A^2(\alpha) & = d d^*\alpha + \phi \rintprod d \left( \phi \rintprod d\alpha \right) \\
& = d d^*\alpha + d^* d \alpha - d^2 \alpha \rintprod \psi + \frac{\tau_0}{2} \phi \rintprod d \alpha \\
& = \left( d d^* + d^* d \right) \alpha - \left( F_A \wedge \alpha \right) \intprod \psi + \frac{\tau_0}{2} \phi \rintprod d \alpha \\
& = \frac{\tau_0}{2} \phi \rintprod d \alpha + \left( d d^* + d^* d \right) \alpha + \left( F_A \intprod \psi \right) \rintprod \alpha.
\end{split}
\end{equation}
But since $A$ is an instanton, we have
$$F_A \intprod \psi = * \left( F_A \wedge \phi \right) = - F_A.$$
Substituting into (\ref{weitzmess}) and applying the Bochner formula yields (\ref{weitwithsa}). Then (\ref{LAsquared}) is obtained by substituting $\tau_0 = 4$ and $\Ric_g = 6g$ on the round 7-sphere.
\end{proof}

\begin{rmk}\label{rmk:instantons}
Ball and Oliveira \cite{balloliveira} have studied instantons on the Aloff-Wallach spaces, which are nearly parallel. Singhal \cite{raginisinghal} also studies instantons on homogeneous nearly parallel $\G_2$-manifolds, using spinorial methods similar to those of Charbonneau and Harland \cite{charbharland} in the context of nearly K{\"a}hler manifolds. 
\end{rmk}

\section{Hopf fibration and standard instantons}\label{sec:quatinst}


In this section, we give an explicit description of the standard (A)SD instanton and its $\G_2$ relative. We shall use a variant of Atiyah's quaternionic notation \cite{atiyahgeometry} based on the convention (\ref{quaternionbasis}):
\begin{equation*}
\begin{split}
x & = x^0 \mathbf{1} - x^1 \bi - x^2 \bj - x^3 \bk, \qquad \qquad \,\,\,\, \bar{x} = x^0 \mathbf{1} + x^1 \bi + x^2\bj + x^3 \bk  \\
 dx & = dx^0 \mathbf{1} - dx^1 \bi - dx^2 \bj - dx^3 \bk , \qquad d\bar{x}  = dx^0 \mathbf{1} + dx^1 \bi + dx^2 \bj + dx^3 \bk.
\end{split}
\end{equation*}
Here $x$ and $\bar{x}$ are $\H$-valued functions, and $dx$ and $d\bar{x}$ are $\H$-valued differential forms on $\R^4.$ We define $y$ and $\bar{y}$ similarly, and will identify
\begin{equation*}
\R^8 = \H_x \oplus \H_y
\end{equation*}
as above.

\subsection{Quaternionic Hopf fibration}\label{ss:hopf} 
The (right) Hopf fibration is given by the quotient projection under right-multiplication by $\H^{\times}:$
\begin{equation}\label{hopf}
\pi: \H^2 \setminus \{ (0,0) \} \rightarrow \H \P^1.
\end{equation}
The fibration (\ref{quathopf}) is obtained by restricting (\ref{hopf}) to $S^7,$ giving the quotient projection under right-multiplication by $\Sp(1) \cong S^3 \subset \H^\times.$

To see the identification $\H \P^1 \cong S^4$
explicitly, observe that $\Sp(2)$ acts on $S^7$ by isometries commuting with $\Sp(1)_r.$ The stabilizer of an $S^3$ fiber of (\ref{quathopf}) is the subgroup $\Sp(1)_l \times \Sp(1)_l \subset \Sp(2).$ Meanwhile, $\Sp(2)$ acts by conjugation on the 5-dimensional space of  $2\times 2$ traceless self-adjoint quaternionic matrices:
\begin{equation}\label{wdef}
W = \left\{ \left( \begin{array}{cc}
a  & \bar{z} \\
z & -a  \end{array} \right) \mid a \in \R, z \in \H \right\},
\end{equation}
where the stabilizer of an axis is again $\Sp(1)_l \times \Sp(1)_l.$ We therefore have a map
\begin{equation*}\label{quotientid}
\H \P^1 = \Sp(2)/ \Sp(1)_l \times \Sp(1)_l \,\, \tilde{\longrightarrow} \,\, \SO(5) / \SO(4) = S^4
 \end{equation*}
which is an isometry, up to a factor of $1/2.$ 

The basic link between instantons on $S^7$ and $S^4$ is as follows; a more general statement appears in Proposition \ref{prop:hym} below.

\begin{lemma}\label{lemma:instapullback} Let $B$ be a connection on a principal bundle over $S^4.$ Then $B$ is an ASD instanton if and only if the pullback $A = \pi^*B$ by the Hopf fibration is a $\G_2$-instanton, for either the standard or the squashed nearly parallel $\G_2$-structure on $S^7.$ 
\end{lemma}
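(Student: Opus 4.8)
The plan is to reduce the statement to a pointwise, purely algebraic fact about the $4$-form $\psi$ and to verify it at one convenient point. Since $A = \pi^* B$, its curvature $F_A = \pi^* F_B$ is \emph{horizontal} (it annihilates vectors tangent to the Hopf fibers) and \emph{basic}; hence at each $q \in S^7$ it lies in $\Lambda^2 H_q^*$, where $H_q \subset T_q S^7$ is the horizontal space of the fibration. Writing the instanton condition in the form (\ref{instantondef2}), namely $\psi \wedge F_A = 0$, the claim becomes: for $\beta = \pi^*(F_B|_{\pi(q)}) \in \Lambda^2 H_q^*$,
\[ \psi|_q \wedge \beta = 0 \iff \beta \in \Lambda^{2-} H_q^*, \]
where $\Lambda^{2-} H_q^*$ denotes the anti-self-dual horizontal $2$-forms. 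Because $d\pi_q \colon H_q \to T_{\pi(q)} S^4$ is a conformal isomorphism, it carries the ASD subspace on $S^4$ isomorphically onto $\Lambda^{2-} H_q^*$ (the Hodge star on middle-degree forms in dimension four being conformally invariant), so this algebraic equivalence is exactly the assertion that $B$ is ASD if and only if $A$ is a $\G_2$-instanton.

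It suffices to check the displayed equivalence at one point. Indeed, $\Sp(2)$ acts transitively on $S^7$ by isometries commuting with $\Sp(1)_r$, so it preserves both the horizontal distribution and the metric (hence the ASD subspaces); moreover $\Sp(2) \subset \Spin(7)$ and $\Sp(2) \subset \Sp(2)\Sp(1)$, so it preserves $\psi_{std}$ and $\psi_{sq}$ alike. I therefore work at $p = (1,0) \in S^7 \subset \H_x \oplus \H_y$, where $\vec r = \frac{\partial}{\partial x^0}$; there the vertical space is $V = \langle \partial_{x^1}, \partial_{x^2}, \partial_{x^3}\rangle$ and the horizontal space is $H = \R^4_y$, and by construction $\psi_{std}|_p = \psi_0$. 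I then grade forms at $p$ by their number of horizontal ($y$) and vertical ($x^1,x^2,x^3$) factors. Since $\beta$ has bidegree $(2,0)$, while a $6$-form can only have bidegree $(4,2)$ or $(3,3)$ (the horizontal rank being $4$ and the vertical rank $3$), the product $\psi \wedge \beta$ depends only on the $(2,2)$- and $(1,3)$-components of $\psi$.

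For the standard structure this is immediate from (\ref{psistd}) and (\ref{spin7splitting}). Since $\beta \in \Lambda^2 \R^4_y$ automatically satisfies $\frac{\partial}{\partial x^0} \intprod \beta = 0$, it lies in $\Lie(\G_2) = \{\eta \in \Lie(\Spin(7)) : \frac{\partial}{\partial x^0}\intprod\eta = 0\}$ if and only if it lies in $\Lie(\Spin(7))$; and by inspection of (\ref{spin7splitting}) the only element of $\Lie(\Spin(7))$ lying purely in $\Lambda^2\R^4_y$ comes from the summand $\Lambda^{2-}_y$ (the forms $\omega^y_i$ occur only entangled with $\omega^x_i$ inside $\Lambda^{2+}_d$). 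Thus $\psi_{std}\wedge\beta = 0$ exactly when $\beta \in \Lambda^{2-}_y$ is anti-self-dual. Equivalently, one computes directly that the $(2,2)$-part of $\psi_{std}|_p$ equals $dx^{23}\wedge\omega^y_1 - dx^{13}\wedge\omega^y_2 - dx^{12}\wedge\omega^y_3$ and that its $(1,3)$-part vanishes, so $\psi_{std}\wedge\beta = \sum_i \sigma_i \wedge (\omega^y_i \wedge \beta)$ with $\sigma_i = dx^{23}, -dx^{13}, -dx^{12}$ linearly independent; since $\omega^y_i \wedge \beta$ isolates the self-dual part of $\beta$, this vanishes iff $\beta$ is ASD.

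For the squashed structure the same bidegree reduction applies, but there is no $\Spin(7)$ on hand, so I would compute $\psi_{sq} = \ast_{sq}\phi_{sq}$ at $p$ directly from the expression for $\phi_{sq}$ in Example \ref{ex:squashed} and the squashed metric, and extract its $(2,2)$- and $(1,3)$-components. The claim to verify is that the $(1,3)$-part again vanishes and that the $(2,2)$-part is again of the form $\sum_i \sigma'_i \wedge \omega^y_i$, with $\sigma'_i$ linearly independent vertical $2$-forms and with the \emph{self-dual} horizontal forms $\omega^y_i$ (not the anti-self-dual ones) appearing; granting this, the argument of the previous paragraph gives $\psi_{sq}\wedge\beta = 0 \iff \beta$ is ASD. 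I expect this squashed computation to be the main obstacle, since it requires the explicit squashed Hodge star and a careful bookkeeping of the fiber-rescaled $3$- and $4$-forms. The decisive, convention-sensitive point is precisely that the surviving horizontal factors are the self-dual $\omega^y_i$: this is the algebraic shadow of Remark \ref{rmk:convention}, in that a convention placing the diagonal $\Sp(1)_r$ inside $\Spin(7)$ would flip the relevant orientation, so that pullbacks of ASD (rather than SD) instantons would fail the $\G_2$-instanton condition and the lemma would break.
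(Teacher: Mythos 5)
Your overall strategy is sound and, for the standard structure, essentially coincides with the paper's proof: the paper likewise uses horizontality of $F_A = \pi^* F_B$ and $\Sp(2)$-invariance to reduce to the point $(1,0)$, where $\phi_{std} = dx^{123} + dx^1 \wedge \omega_1^y + dx^2 \wedge \omega_2^y - dx^3 \wedge \omega_3^y$, and then observes that $\phi \rintprod F_A$ vanishes if and only if $B$ is ASD. Your two finishes for the standard case are both correct; the one via the decomposition (\ref{spin7splitting}), i.e.\ the identification $\Lie(\Spin(7)) \cap \Lambda^2 \R^4_y = \Lambda^{2-}_y$, is a clean alternative to the direct wedge computation, and your closing remark about the convention is exactly the content of Remark \ref{rmk:convention}.

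The genuine gap is the squashed case, which is half of the statement and which you leave as an unverified expectation (``I would compute \dots the claim to verify is \dots''). The verification does succeed, and the missing computation is precisely the one carried out in Appendix \ref{appendix:squashed}: in the invariant frames of Definition \ref{defn:coframes} one has $\psi_{a,b} = a^{-2/3} b \left( b \bar{\nu} - a\, \epsilon_{ijk}\, \zeta_i \wedge \zeta_j \wedge \bar{\omega}_k \right)$, and at $(1,0)$ one computes $\zeta_i = dx^i$ and $\bar{\omega}_k = \omega_k^y$, so the $(1,3)$-part (in your bigrading) vanishes and the $(2,2)$-part is a nonzero multiple of $\epsilon_{ijk}\, dx^{ij} \wedge \omega_k^y$ --- exactly your predicted shape $\sum_k \sigma'_k \wedge \omega_k^y$ with $\sigma'_k$ linearly independent vertical 2-forms and the \emph{self-dual} $\omega_k^y$ surviving. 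Alternatively, you can avoid computing the squashed Hodge star entirely by arguing as the paper does, with the contraction form of the equation: rewriting Example \ref{ex:squashed} in the invariant frames gives $\phi_{sq} \propto \tfrac{1}{5}\nu - \zeta_i \wedge \bar{\omega}_i$, which at $(1,0)$ is proportional to $\tfrac{1}{5} dx^{123} - \sum_i dx^i \wedge \omega_i^y$; since $g_{sq}$ preserves the vertical/horizontal splitting and is a constant conformal rescaling of $g_{std}$ on each factor, contracting a horizontal 2-form $\beta$ yields a nonzero multiple of $\sum_i dx^i \langle \omega_i^y, \beta \rangle$, which vanishes if and only if $\beta$ is ASD. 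Without one of these two computations, the lemma is only proved for $\phi_{std}$.
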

\begin{proof} 
By $\Sp(2)$-invariance both of $\phi_{std}$ and of the fibration, it suffices to consider the point $(1,0).$ 
From (\ref{phistdfirstdef}), we have
\begin{equation}\label{phis30}
 \phi (x,0)  = \nu^x + \zeta_1^x \wedge \omega_1^y + \zeta_2^x \wedge \omega_2^y - \zeta_3^x \wedge \omega_3^y.
\end{equation} 
and
\begin{equation}\label{phiat10}
\phi(1,0) = dx^{123} + dx^1 \wedge \omega_1^y + dx^2 \wedge \omega_2^y - dx^3 \wedge \omega_3^y.
\end{equation}
The orthogonal complement of the fiber through $(1,0)$ is $\R^4_y,$ which is mapped conformally onto the tangent space of $S^4$ at $p = \pi(1,0).$ Hence, $F_A(1,0) = \pi^*F_B(p)$ is equal to a 2-form on $\R^4_y.$ It is clear from the expression (\ref{phiat10}) that $\phi \rintprod F_A (1,0)$ vanishes if and only if this 2-form is ASD, which is equivalent to the same statement for $F_B(p).$

The same argument applies on $\phi_{sq}.$
\end{proof}

\subsection{Standard (A)SD instanton} Let $P^+$ denote the principal $\Sp(1)$-bundle associated to the right Hopf fibration. We define the \emph{standard self-dual instanton} to be the connection on $P^+$ induced by the round metric on the total space. The corresponding connection form on $P^+$ is the $\mathfrak{su}(2)$-valued 1-form
\begin{equation*}
\frac{ \Im \LB \bar{x} \, d x + \bar{y} \, d y \RB }{ |x|^2 + |y|^2 }.
\end{equation*}
Pulling back to $\R^4$ by the map $x \mapsto (x,1),$ 
we obtain the well-known connection matrix 
\begin{equation*}
\frac{\Im  \bar{x} d x }{1 + |x|^2},
\end{equation*}
whose curvature is the $\mathfrak{su}(2)$-valued self-dual 2-form
\begin{equation*}
\frac{ d \bar{x} \wedge d x }{\left( 1 + |x|^2 \right)^2 }. 
\end{equation*}
See Atiyah \cite[\S 1]{atiyahgeometry} for these formulae, as well as a generalization giving the complete ADHM construction.

Similarly, we define the \emph{standard anti-self-dual (ASD) instanton} $P^-$ to be the principal bundle associated to the left Hopf fibration, with connection form
\begin{equation}\label{stdasdconnform}
\frac{ \Im \LB w\, d \bar{w} + z \, d\bar{z} \RB }{ |w|^2 + |z|^2 }.
\end{equation}
In the stereographic chart on $\R^4,$ this has a connection matrix
\begin{equation}\label{stdinstaconnmatrix}
B_0(x) = \frac{\Im x d \bar{x} }{1 + |x|^2}
\end{equation}
and curvature the $\Im \H$-valued self-dual 2-form
\begin{equation}\label{stdinstacurvature}
F_{B_0}(x) = \frac{ d x \wedge d \bar{x} }{\left( 1 + |x|^2 \right)^2 }. 
\end{equation}

\subsection{Standard $\G_2$-instanton}\label{ss:stdg2inst}
Let
\begin{equation}
P = P^-\times_{S^4} P^+ \stackrel{\pi_2}{\longrightarrow} S^7
\end{equation}
be the fiber product over $S^4$ of $P^-$ with $P^+,$ considered as a principal $\Sp(1)$-bundle via the projection to $P^+ = S^7.$
According to \S \ref{ss:hopf} and Lemma \ref{lemma:instapullback}, the pullback of the standard ASD instanton on $P^-$ is a $\G_2$-instanton on $P \to S^7,$ which we call the \emph{standard $\G_2$-instanton}, $A_0.$

We may obtain a connection matrix for $A_0$ by pulling back the connection form of the standard ASD instanton (\ref{stdasdconnform}) 
by the fiber-preserving map $\H^{\times 2}_r \to \H^{\times 2}_l $ given by
\begin{equation*}
\begin{split}
w & = y^{-1} \\
z & = x^{-1}.
\end{split}
\end{equation*}
This gives
\begin{equation*}
\begin{split}
A_0(x,y) & = \frac{1}{ |x|^{-2} + |y|^{-2} } \Im \LB x^{-1} d \left( \bar{x}^{-1} \right) + y^{-1} d \left( \bar{y}^{-1} \right) \RB \\
& = \frac{|x|^2|y|^2}{ |x|^{2} + |y|^{2} } \Im \LB - x^{-1} \bar{x}^{-1} d \bar{x} \bar{x}^{-1} - y^{-1} \bar{y}^{-1} d\bar{y} \bar{y}^{-1}  \RB \\
& = \frac{1}{|x|^2 + |y|^2} \Im \LB |y|^2  x^{-1} \, dx + |x|^2 y^{-1} \, dy \RB,
\end{split}
\end{equation*}
which is (\ref{standardg2instanton}) above.
The singularity along the $x$-axis can be removed by applying the gauge transformation $g(x,y) = y/|y|:$ noting that
$dg g^{-1} = - \Im y d\bar{y} / |y|^2,$ we obtain
\begin{equation*}
\begin{split}
 g(A_0) = gA_0 \bar{g} - dg \bar{g} & = \frac{1}{|x|^2 + |y|^2} \Im \LB  y  x^{-1} dx \bar{y} + \frac{|x|^2}{|y|^2} dy \bar{y} + \frac{|x|^2 + |y|^2}{|y|^2} y d\bar{y} \RB \\
 & = \frac{1}{|x|^2 + |y|^2} \Im \LB y x^{-1} dx \bar{y} + y d\bar{y} \RB.
\end{split}
\end{equation*} 
 The singularity along the $y$-axis can be removed similarly. 
However, 
they cannot be removed simultaneously, for according to the following proposition, the bundle $P \to S^7$ is nontrivial.
Recall that by the clutching construction, topological $\SU(2)$-bundles on $S^7$ are classified by $\pi_6(S^3) = \Z_{12}.$


\begin{prop}[Crowley-Goette {\cite[(1.18)]{crowleygoettekreck}}]
For an $SU(2)$-bundle on $S^4$ with $c_2(E) = \kappa,$ the pullback bundle 
on $S^7$ has homotopy class
\begin{equation*}
\frac{\kappa \left( \kappa + 1 \right)}{2} \in \Z_{12}. 
\end{equation*}
\end{prop}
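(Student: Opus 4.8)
The plan is to recast the statement homotopy-theoretically and reduce it to the failure of additivity of composition with a Hopf-invariant-one map. Principal $\Sp(1) \cong \SU(2)$-bundles over $S^n$ are classified by $[S^n, B\Sp(1)] = \pi_n(\H\P^\infty) = \pi_{n-1}(S^3)$; thus bundles over $S^4$ are classified by $c_2 \in \pi_4(\H\P^\infty) \cong \Z$, whose generator is the inclusion $\iota\colon \H\P^1 = S^4 \hookrightarrow \H\P^\infty$, while bundles over $S^7$ are classified by $\pi_7(\H\P^\infty) = \pi_6(S^3) \cong \Z_{12}$. If $E$ has $c_2 = \kappa$, its classifying map is $\kappa\,\iota \in \pi_4(\H\P^\infty)$, and the pullback $\pi^* E$ is classified by the composite with the Hopf projection. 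Writing $\nu = [\pi] \in \pi_7(S^4)$ for the class of the right Hopf fibration, we must therefore compute $(\kappa\,\iota)\circ\nu \in \Z_{12}$ as a function of $\kappa$.

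First I would invoke the distributive law for precomposition with a map out of a sphere (G.\,W. Whitehead, \emph{Elements of Homotopy Theory}, Ch.~XI). For $\xi,\eta \in \pi_4(\H\P^\infty)$ the correction terms are Hilton--Hopf products indexed by the James--Hopf invariants of $\nu$; here the first lands in $\pi_7(S^{7}) \cong \Z$ and the next in $\pi_7(S^{10}) = 0$, so the expansion terminates:
\[
(\xi + \eta)\circ\nu = \xi\circ\nu + \eta\circ\nu + [\xi,\eta]\circ H(\nu).
\]
Since the quaternionic Hopf map has $H(\nu) = \pm 1$, the last term is $\pm[\xi,\eta]$. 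Setting $v = \iota\circ\nu$ and $w = [\iota,\iota] \in \Z_{12}$ and inducting on $\kappa$ yields the quadratic formula $(\kappa\,\iota)\circ\nu = \kappa\,v + \tbinom{\kappa}{2}\,w$.

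It remains to evaluate $v$ and $w$. The group $\pi_7(\H\P^\infty) = \pi_7(\H\P^2)$ is computed from the cofibration $S^7 \xrightarrow{\,\nu_{\mathrm{att}}\,} S^4 \hookrightarrow \H\P^2 = S^4 \cup_{\nu_{\mathrm{att}}} e^8$, whose exact sequence exhibits $\iota_*\colon \pi_7(S^4) = \Z\{\nu\} \oplus \Z_{12}\{\Sigma\nu'\} \to \Z_{12}$ as the quotient by the attaching class $\nu_{\mathrm{att}}$, hence as an isomorphism on the torsion summand (with $\nu'$ a generator of $\pi_6(S^3)$). For $w$, naturality of the Whitehead product gives $w = \iota_*[\mathbf 1_{S^4},\mathbf 1_{S^4}]$, and the classical identity $[\mathbf 1_{S^4},\mathbf 1_{S^4}] = 2\nu_{\mathrm{att}} - \Sigma\nu'$ (the Whitehead square has Hopf invariant $\pm 2$ and suspends to zero) forces $w$ to be a generator of $\Z_{12}$. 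The linear term $v = \iota\circ\nu$ is the class of $\pi^*$ of the tautological bundle; comparing the right Hopf map $\nu$ used for the pullback against the attaching map $\nu_{\mathrm{att}}$, which differ only by torsion, shows that $v$ is likewise a generator, equal to $w$. Substituting $v = w$ gives $(\kappa\,\iota)\circ\nu = \big(\kappa + \tbinom{\kappa}{2}\big)\,w = \tfrac{\kappa(\kappa+1)}{2}\,w$.

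The main obstacle is exactly this last evaluation. The left and right quaternionic Hopf maps agree up to Hopf invariant but differ in the $\Z_{12}$-torsion of $\pi_7(S^4)$, and it is this difference --- equivalently, the choice of which Hopf fibration defines $\H\P^\infty = B\Sp(1)$ and which defines the pullback map $\pi$ --- that pins down the linear coefficient. Since $\tfrac{\kappa(\kappa+1)}{2}$ and $\tfrac{\kappa(\kappa-1)}{2}$ are interchanged by $\kappa \mapsto -\kappa$ (reversing the orientation of $S^4$, or the sign of $c_2$), only the conventions fixed in \S\ref{ss:hopf} select the $+$ sign. Getting this orientation bookkeeping right, rather than the quadratic structure itself, is where the real care lies, which is why we simply cite the computation of Crowley and Goette.
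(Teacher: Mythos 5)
The paper offers no argument of its own for this proposition---it is stated as a direct quotation of Crowley--Goette (1.18)---so your derivation can only be judged on its own merits, and its skeleton is sound. The classification of $\Sp(1)$-bundles by $\pi_4(\H\P^\infty)\cong\Z$ and $\pi_7(\H\P^\infty)\cong\pi_6(S^3)\cong\Z_{12}$, the Hilton distributive law with higher invariants dying in $\pi_7(S^{10})=0$, the induction giving $(\kappa\iota)\circ\nu=\kappa v+\binom{\kappa}{2}w$, the computation $\pi_7(\H\P^\infty)=\pi_7(\H\P^2)=\pi_7(S^4)/\langle\nu_{\mathrm{att}}\rangle$, and the evaluation of $w=\iota_*[\mathbf{1}_{S^4},\mathbf{1}_{S^4}]$ as a generator via $[\iota_4,\iota_4]=2\nu_{\mathrm{att}}-\Sigma\nu'$ are all correct.

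The genuine gap is the final evaluation $v=w$. The sentence ``$\nu$ and $\nu_{\mathrm{att}}$ differ only by torsion, [which] shows that $v$ is likewise a generator, equal to $w$'' is a non sequitur: a torsion difference can be any of the twelve torsion classes, including zero, and the resulting formulas $\tfrac{\kappa(\kappa-1)}{2}$ (if $v=0$), $\tfrac{\kappa(\kappa+1)}{2}$ (if $v=w$), $\tfrac{\kappa(\kappa-3)}{2}$ (if $v=-w$) are genuinely distinct functions mod $12$. You concede this and fall back on citing Crowley--Goette for precisely the constant that distinguishes the stated answer from its competitors, which makes the argument circular as a proof of the formula. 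The gap can, however, be closed inside your own framework with no delicate orientation bookkeeping. Your inductive formula holds for all integers $\kappa$ (run the distributive law downward as well as upward), so $f(-1)=-v+\binom{-1}{2}w=-v+w$. Now pull back the right Hopf bundle $P^+$ along its own projection $\pi$: the fiber product $\pi^*P^+=P^+\times_{S^4}P^+$ has the diagonal section, hence is trivial. With the paper's conventions the standard ASD instanton lives on the left Hopf bundle $P^-$, so $c_2(P^-)=+1$ and $c_2(P^+)=-1$; triviality of $\pi^*P^+$ therefore says $f(-1)=0$, forcing $v=w$ and $f(\kappa)=\bigl(\kappa+\binom{\kappa}{2}\bigr)w=\tfrac{\kappa(\kappa+1)}{2}\,w$ with $w$ a generator of $\Z_{12}$, as claimed. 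With that substitution your proof is complete and, unlike the paper's, self-contained.
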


\subsection{Curvature calculation}\label{ss:curvcalc} We check directly that $A_0$ is a $\G_2$-instanton on $S^7.$ By (\ref{phistdzerothdef}), this is equivalent to showing that (\ref{standardg2instanton}) is a $\Spin(7)$-instanton on $\R^8.$ We calculate 
\begin{equation*}
\begin{split}
d \left( x^{-1} dx \right) 
& = - x^{-1} dx  \wedge  x^{-1} dx
\end{split}
\end{equation*}
and
\begin{equation*}
\begin{split}
d \left( \frac{ |y|^2}{|x|^2 + |y|^2} \right) 
& = \frac{ |x|^2|y|^2}{\left( |x|^2 + |y|^2 \right)^2} 2 \Re \LB y^{-1} dy - x^{-1} dx \RB \\
& = - \, d \left( \frac{|x|^2}{|x|^2 + |y|^2} \right).
\end{split}
\end{equation*}
This gives
\begin{equation}\label{dacalc}
\begin{split}
dA_0 & = \frac{1}{\left( |x|^2 + |y|^2 \right)^2} \Im \LB \begin{split} 
&  x^{-1} dx \wedge \left(-2 |x|^2|y|^2 \Re \LB y^{-1} dy - x^{-1} dx \RB - \left( |y|^4 + |x|^2 |y|^2 \right) x^{-1} dx  \right)   \\
& + y^{-1} dy \wedge \left(-2 |x|^2 |y|^2 \Re \LB x^{-1} dx  - y^{-1} dy \RB - \left( |x|^4 + |x|^2 |y|^2 \right) y^{-1} dy \right) \end{split} \RB \\
& = \frac{1}{\left( |x|^2 + |y|^2 \right)^2} \Im \LB \begin{split} 
& x^{-1} dx \wedge \left(-2 |x|^2|y|^2 \Re y^{-1} dy  + |y|^2 d\bar{x} x  -  |y|^4 x^{-1} dx \right)  \\
& + y^{-1} dy \wedge \left( - 2 |x|^2 |y|^2 \Re  x^{-1} dx + |x|^2 d\bar{y} y  -  |x|^4 y^{-1} dy \right) \end{split} \RB \\
& = \frac{1}{\left( |x|^2 + |y|^2 \right)^2} \Im \LB \begin{split} 
&  |y|^2 x^{-1} dx  \wedge d\bar{x} x +  |x|^2 y^{-1} dy \wedge d\bar{y} y - \bar{x} dx \wedge d \bar{y}  y  - \bar{y} dy  \wedge d\bar{x} x \\
& - 2 |x|^2 |y|^2  y^{-1} dy \wedge x^{-1} dx -  |x|^4 y^{-1} dy \wedge y^{-1} dy - |y|^4 x^{-1}  dx \wedge x^{-1} dx \end{split} \RB.
\end{split}
\end{equation}
On the other hand, we have
\begin{equation}\label{asquaredcalc}
A_0 \wedge A_0 = \frac{1}{\left( |x|^2 + |y|^2 \right)^2} \Im \LB |y|^4 x^{-1} dx \wedge x^{-1} dx + |x|^4 y^{-1} dy \wedge y^{-1} dy + 2 |x|^2 |y|^2 x^{-1} dx \wedge y^{-1} dy  \RB.
\end{equation}
Adding (\ref{dacalc}) and (\ref{asquaredcalc}), we obtain the curvature form
\begin{equation}\label{stdg2instcurv}
\begin{split}
F_{A_0} (x,y) & = dA_0 + A_0 \wedge A_0 \\
& = \frac{1}{\left( |x|^2 + |y|^2 \right)^2} \Im \LB
 |y|^2 x^{-1} dx \wedge d\bar{x} x +  |x|^2 y^{-1} dy \wedge d\bar{y} y - 2 \bar{x} dx \wedge d \bar{y} y  \RB \\
& = \frac{1}{ \left(|x|^2 + |y|^2 \right)^2 |x|^2 |y|^2 } \Im \LB \overline{\left( d\bar{x} x |y|^2 - d \bar{y} y |x|^2 \right)} \wedge \left(d\bar{x} x |y|^2 - d \bar{y} y |x|^2 \right) \RB.
\end{split}
\end{equation}
The 2-forms $dx \wedge d\bar{x},$ $dy \wedge d\bar{y},$ and $dx \wedge d \bar{y}$ are each invariant under $\Sp(1)_r.$ Therefore, the 2-form part of $F_{A_0}$ lies in $\Lie(\Sp(2)) \subset \Lie(\Spin(7)),$ as claimed.

\subsection{Linear deformations}\label{ss:lineardefos}

Let $A$ be a conical instanton on $\R^8 \setminus \{0\}$ 
whose curvature $F_A$ takes values in $\Lie(\Sp(2)) \otimes \gothg_E.$

Given any $8\times 8$ matrix $M,$ we associate the vector field
\begin{equation*}
X_M = M \cdot \vec{r} = M^i{}_{j} x^j \frac{\partial}{\partial x^i}
\end{equation*}
on $\R^8,$ as well as the $\gothg_E$-valued 1-form
\begin{equation}\label{lineardeformation}
\alpha_M = X_M \intprod F_{A} \in \Omega^1 \left( \gothg_E \right).
\end{equation}
Notice that (\ref{lineardeformation}) corresponds to pushforward by the diffeomorphism generated by $X_M,$ with its horizontal lift to the bundle. For, working in a local gauge where $A_{X_M} = 0,$ we have
$$\frac{d}{dt} \exp(-tX_M)^* A = X_M \left( A \right) = F(X_M, -) = \alpha_M.$$ The action on the curvature is given by
\begin{equation}\label{deformationexpthing}
\frac{d}{dt} \exp(-tX_M)^* F_A = \frac{d}{dt} F_{\exp^*(-t X_M) A}  = D_{A} \alpha_M.
\end{equation}

\begin{lemma}\label{lemma:coulomb} For $M \in \Lie(\Sp(2))^\perp \subset\R^{8\times 8},$ we have
\begin{equation}
\left( D_{A}^{S^7} \right)^* \alpha_M = 0.
\end{equation}
\end{lemma}
\begin{proof} Let $\alpha = \alpha_M$ and $F = F_A.$ In coordinates on $\R^8,$ we have
\begin{equation}\label{R8coulomb}
\left( D_A^{\R^8} \right)^*\alpha = \nabla^i X^j F_{ij} + X^j \nabla^i F_{ij} = M_{ij} F_{ij}.
\end{equation}
Since the curvature takes values in $\Lie(\Sp(2))$ and $M$ belongs to the orthogonal complement, the expression (\ref{R8coulomb}) vanishes identically.
The result then follows from the formula
$$\left( D_A^{\R^8} \right)^*\alpha = \left( D_A^{S^7} \right)^*\alpha - \LA \hat{r}, \nabla_{\hat{r}} \left( \alpha \left( \hat{r} \right) \right) \RA$$
and the fact that $\alpha(\hat{r}) \equiv 0$ for a conical instanton.
\end{proof}

\begin{prop}\label{prop:15deformations} Let $W$ be the 5-dimensional space (\ref{wdef}). 
Then $\ker \scrL_A$ contains the space 
\begin{equation}\label{15deformations}
\mathscr{V}_A = \{ \alpha_M \mid M \in W \oplus W I_1 \oplus W I_2 \},
\end{equation}
where $\alpha_M$ is defined by (\ref{lineardeformation}).

If $F_A(x,y)$ spans $\Lie(\Sp(2))$ as $(x,y)$ varies over $\R^8,$ then $\dim(\scrV_A) = 15.$ 
\end{prop}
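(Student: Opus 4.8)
The plan is to split the equation $\scrL_A\alpha_M=0$ into its two components. By the folded form of the deformation operator, $\scrL_A(\alpha_M)=\bigl(d^{*}\alpha_M,\ \phi\rintprod d\alpha_M\bigr)$, so it suffices to prove, for every $M$ in the $15$-dimensional space $W\oplus WI_1\oplus WI_2$, both the Coulomb condition $d^{*}\alpha_M=0$ and the linearized instanton equation $\phi\rintprod d\alpha_M=0$.

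First I would dispose of the Coulomb condition. By Lemma \ref{lemma:coulomb}, $d^{*}\alpha_M=0$ is equivalent to $M\in\Lie(\Sp(2))^{\perp}$, so it is enough to check $W\oplus WI_1\oplus WI_2\perp\Lie(\Sp(2))$ in $\R^{8\times 8}$. Identifying quaternionic self- and anti-self-adjoint matrices with real symmetric and antisymmetric ones (using $L_q^{\top}=L_{\bar q}$ and $R_q^{\top}=R_{\bar q}$), the factor $W$ is symmetric while $\Lie(\Sp(2))$ is antisymmetric, giving orthogonality for free. For $WI_1,WI_2$, writing $I_j=R_{e_j}$ and using that left- and right-multiplications commute, the pairing $\langle MI_j,\xi\rangle$ reduces to a sum of real traces of the form $\operatorname{tr}_{\R}(L_pR_{e_j})$; since $\operatorname{tr}_{\R}(L_pR_q)=4\,\Re(p)\,\Re(q)$ and $e_1,e_2$ are imaginary, every such trace vanishes. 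Hence the whole $15$-dimensional space lies in $\Lie(\Sp(2))^{\perp}$.

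The substance is the instanton equation. Here I would pass from $S^{7}$ back to $\R^{8}$: since $A$ is conical one has $\vec r\intprod F_A=0$, so $\alpha_M(\hat r)=0$ and $\alpha_M$ descends to $S^{7}$, and by the mechanism of Lemma \ref{lemma:instapullback} (using $\phi_{std}=\vec r\intprod\Psi_0|_{S^7}$) it is enough to prove the linearized $\Spin(7)$-instanton equation on $\R^{8}$, namely $P(D_A\alpha_M)=0$, where $P(\eta)=\eta+\ast(\Psi_0\wedge\eta)$ has kernel $\Lie(\Spin(7))$ by (\ref{spin7def}). The key observation is a clean splitting of $D_A\alpha_M$. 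By (\ref{deformationexpthing}), $D_A\alpha_M=\tfrac{d}{dt}\big|_0\exp(-tX_M)^{*}F_A$; differentiating the pullback of the linear flow $\exp(tM)$ yields
\[ D_A\alpha_M = D_{A,X_M}F_A + \rho(M)F_A, \]
where $\rho(M)$ denotes the pointwise linear action of $M\in\mathfrak{gl}(8)$ on $2$-forms. Because $F_A$ takes values in the \emph{fixed} subspace $\Lie(\Sp(2))\subset\Lambda^2$, the covariant directional-derivative term $D_{A,X_M}F_A$ is merely the derivative of the $\gothg_E$-coefficients of a form valued in that fixed subspace, hence still takes values in $\Lie(\Sp(2))\subset\Lie(\Spin(7))$, and is killed by $P$. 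Therefore the obstruction collapses to the purely algebraic statement
\[ P(D_A\alpha_M)=P\bigl(\rho(M)F_A\bigr),\qquad\text{so it remains to show}\qquad \rho(M)\,\Lie(\Sp(2))\subset\Lie(\Spin(7)). \]

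This last inclusion is the main obstacle, and it is where the precise shape of $W\oplus WI_1\oplus WI_2$ enters. Since $F_A$ spans $\Lie(\Sp(2))$, it becomes a finite linear-algebra check: with $\Psi_0$ as in (\ref{psidef}) and the decomposition (\ref{spin7splitting}), one verifies for a basis of $\Lie(\Sp(2))$ and each of the three factors that $\Psi_0\wedge\rho(M)F=-\ast\rho(M)F$. For antisymmetric $M$ the operator $\rho(M)$ is the bracket $[M,\,\cdot\,]$ under $\Lambda^2\cong\mathfrak{so}(8)$, so the factors $WI_1,WI_2$ reduce to commutator computations against $\Lie(\Sp(2))$, whereas for the symmetric factor $W$ one must compute $\rho(M)$ directly. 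I expect the cleanest bookkeeping to exploit the parallel between the summands $W,WI_1,WI_2$ and the Fueter splitting $\mathfrak F=\H_l\oplus\H_lI_1\oplus\H_lI_2$ of (\ref{fueterfactors}), together with the asymmetric sign in $\Psi_0$ (the $-\,\omega_3^x\wedge\omega_3^y$ term), which is exactly what excludes a would-be fourth factor $WI_3$ and singles out $I_1,I_2$. Establishing $\rho(M)\,\Lie(\Sp(2))\subset\Lie(\Spin(7))$ on this basis completes the proof.
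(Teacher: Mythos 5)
Your proposal is correct, and at bottom it is the linearized version of the paper's own argument: the paper likewise splits the problem into the Coulomb condition (handled via Lemma \ref{lemma:coulomb}, exactly as you do) and the linearized instanton equation, which it verifies by observing that the $\alpha_M$ are tangent to flows of symmetries (pushforward by $\GL(2,\H)$ for $W$, and by $\Spin(7)$ for $WI_1\oplus WI_2$). Where you diverge is the final step: having reduced matters to the inclusion $\rho(M)\,\Lie(\Sp(2))\subset\Lie(\Spin(7))$, you propose a basis-by-basis verification and only ``expect'' it to close. In fact no computation is needed, and the two observations that make it trivial are precisely the paper's group-level statements. For $M\in WI_1\oplus WI_2$, $M$ is antisymmetric and lies in $\Lie(\Spin(7))$ by (\ref{spin7splitting}), and $\rho(M)$ is $\pm\,\mathrm{ad}(M)$ under $\Lambda^2\cong\mathfrak{so}(8)$; since $\Lie(\Sp(2))\subset\Lie(\Spin(7))$ and $\Lie(\Spin(7))$ is closed under the bracket, the inclusion is automatic. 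For $M\in W$, writing $\eta\in\Lie(\Sp(2))$ as an antisymmetric $\H_r$-linear endomorphism, one has $\rho(M)\eta=M\eta+\eta M$, which is again antisymmetric (because $M$ is symmetric) and $\H_r$-linear (because $M$ is quaternionic-linear); hence $\rho(M)$ preserves $\Lie(\Sp(2))$ itself, which is the infinitesimal form of the paper's remark that $\GL(2,\H)$ preserves the algebra $\Lie(\Sp(2))$. With these two lines your reduction becomes a complete proof, and it also explains why your brute-force check would have succeeded. One small omission: the proposition asserts the family (\ref{15deformations}) is 15-dimensional, which requires $M\mapsto\alpha_M$ to be injective; as the paper notes, this follows from the hypothesis that $F_A$ spans $\Lie(\Sp(2))$ pointwise, so that $\alpha_M\neq 0$ whenever $M\neq 0$.
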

\begin{proof} 
According to (\ref{deformationexpthing}), 
the subspace $\{\alpha_M \mid M \in W\}$ corresponds to pushforward by elements of $\SL(2, \H),$ which preserve the algebra $\Lie(\Sp(2));$ hence, these correspond to infinitesimal deformations. By Lemma \ref{lemma:coulomb}, the space (\ref{15deformations}) is in Coulomb gauge, so the first factor lies in the kernel of $\scrL_A.$ 

The second factor may be described as follows:
\begin{equation*}
WI_1 = \left\{ \left( \begin{array}{cc}
aI_1  & \bar{z}I_1 \\
zI_1 & -aI_1  \end{array} \right) \mid a \in \R, z \in \H \right\}.
\end{equation*}
The matrix $\left( \begin{array}{cc}
I_1  & 0 \\
0 & -I_1  \end{array} \right)$ is just the first element in (\ref{omegad123}), which belongs to the subspace $\Lambda^2_d \subset \Lie(\Spin(7)).$ Since $I_1$ commutes with $\H_l,$ we also have
\begin{equation*}
\left( \begin{array}{cc}
0 & \bar{z}I_1 \\
zI_1 & 0 \end{array} \right) = \left( \begin{array}{cc}
0 & - \overline{zI_1}\\
zI_1 & 0 \end{array} \right),
\end{equation*}
which belongs to the subspace $\mathfrak{F}_{x,y} \subset \Spin(7)$ given by (\ref{frakFdef}). Hence, the second factor corresponds to pushforward by elements of $\Spin(7),$ which are in Coulomb gauge by Lemma \ref{lemma:coulomb}. The same is true of the third factor. 
Therefore the space $\scrV_A$ is contained in the kernel of the deformation operator $\scrL_A.$

For the second statement, if the curvature $F_A(x,y)$ spans $\Lie(\Sp(2)),$ then for any nonzero $M,$ the element $\alpha_M$ must be nonzero. In particular, the map $M \mapsto \alpha_M$ has vanishing kernel, hence rank 15.
\end{proof}

\begin{rmk} The master's thesis of Jurke \cite{jurkemasters} studies $\Spin(7)$-instantons using a quaternionic approach similar to that of this section. The author thanks an anonymous referee for pointing out this reference.
\end{rmk} 

\section{Infinitesimal deformations} 
\label{sec:generaldefos}

In this section, we calculate the space of infinitesimal deformations, as a $\G_2$-instanton, of the pullback to $S^7$ of a general irreducible ASD instanton on $S^4.$ We write
$$\nabla = \nabla^{S^7} = \pi_{S^7} \circ \nabla^{\R^8}$$
for the Levi-Civita connection on $S^7_{std},$ which we shall couple to the connection on any auxiliary bundle. We shall also write
\begin{equation}\label{nablavh}
\nabla^v = \nabla_{\pi_v}, \qquad \nabla^h = \nabla_{\pi_h}.
\end{equation}
Here, $\pi_v$ is the orthogonal projection to the the vertical tangent space of the Hopf fibration and $\pi_h$ is the complementary projection, with respect to the round metric. 

\subsection{Vertical and horizontal components}
Let $\Omega^1_h$ be the annihilator of vertical vector fields along the Hopf fibration, and $\Omega^1_v$ its orthogonal complement. We have
\begin{equation*}
\Omega^1_{S^7} = \Omega^1_v \oplus \Omega^1_h.
\end{equation*}
Letting
$$\Omega^{(p,q)} = \Lambda^p \Omega^1_v \otimes \Lambda^q \Omega^1_h \subset \Omega^{p + q}_{S^7},$$
we have a decomposition
$$\Omega^{k}_{S^7} = \bigoplus_{p + q = k} \Omega^{(p,q)}.$$
An element of $\Omega^{(p,q)}$ will be referred to as a $(p,q)$-form.

Let $\nu$ denote the $(3,0)$ volume form of the Hopf fibration, and let
$$\bar{\nu} = * \nu.$$
The $(0,2)$-forms split as
$$\Omega^{(0,2)} = \Omega^{2+}_h \oplus \Omega^{2-}_h$$
where $\Omega^{2\pm}_h$ are the (anti-)self-dual components 
with respect to the $(0,4)$ volume form $\bar{\nu}.$
 
For a $(0,1)$-form $b,$ we shall write $d^v b$ for the $(1,1)$ part of $db$ and $d^h b$ for the $(0,2)$ part. A similar notation will be used for $(1,0)$-forms (see Lemma \ref{lemma:dalpha} below).

\begin{defn}\label{defn:coframes} Let $\omega_i^x, \omega_i^y, \zeta_i^x,$ and $\zeta_i^y$ be as in (\ref{2formbases}) and (\ref{zetaidef}). For $i = 1,2,3,$ define the global $\Sp(2)$-invariant forms on $S^7$:
\begin{equation*}
\begin{split}
\zeta_i & = \vec{r} \intprod \left( \omega^x_i + \omega^y_i \right) \\
\omega_i^\circ & = \left. \omega^x_i + \omega^y_i \right|_{S^7} \\
\bar{\omega}_i  & = \omega_i^\circ - \frac{1}{2}\epsilon_{ijk} \zeta_j \wedge \zeta_k.
\end{split}
\end{equation*}
Notice that $\{\zeta_i\}$ and $\{ \bar{\omega}_i \}$ are global frames for $\Omega^1_v$ and $\Omega^{2+}_h,$ respectively. The vertical volume form is given by
$$\nu = \zeta_1 \wedge \zeta_2 \wedge \zeta_3.$$
From (\ref{phistdfirstdef}) and the $\Sp(2)$-invariance, we may re-express $\phi_{std}$ as follows: 
\begin{equation}\label{phistdsp2def}
\begin{split}
\phi_{std} 
& = \stackrel{ (3,0) }{\overbrace{ \nu} } + \stackrel{ (1,2) }{\overbrace{ \zeta_1 \wedge \bar{\omega}_1 + \zeta_2 \wedge \bar{\omega}_2 - \zeta_3 \wedge \bar{\omega}_3 }}.
\end{split}
\end{equation}
\end{defn}
\noindent We now derive the basic properties of these frames, and use them to decompose the Laplace operator into horizontal and vertical parts.

\begin{lemma}\label{lemma:gradgammai} The frame $\{\zeta_i\}$ is coclosed, and satisfies
\begin{equation}\label{gradgammai:gradvh}
\nabla^v \zeta_i = \frac{1}{2} \epsilon_{ijk} \zeta_j \wedge \zeta_k, \qquad \nabla^h \zeta_i = \bar{\omega}_i
\end{equation}
\begin{equation}\label{gradgammai:laplace}
\nabla^* \nabla^v \zeta_i = 2 \zeta_i, \qquad \nabla^* \nabla^h \zeta_i = 4 \zeta_i.
\end{equation}
Here $\nabla^v, \nabla^h$ are defined by (\ref{nablavh}) above.
\end{lemma}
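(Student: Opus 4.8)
The plan is to realize each $\zeta_i$ as the metric dual of the fundamental vector field $V_i = I_i \vec{r}$ of the vertical $\Sp(1)_r$-action, and then to compute everything by projecting the flat connection of $\R^8$ onto $S^7$. Since the three $V_i$ generate isometries of the round metric, they are divergence-free; hence $d^*\zeta_i = -\div V_i = 0$, which is the coclosedness claim. One first checks that $\{V_i\}$ is an orthonormal frame for the vertical distribution (using $|\vec{r}| = 1$ and the skew-adjointness of the $I_i$), so that $\{\zeta_i\}$ is the dual coframe.

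For the first-derivative formulas I would start from $\nabla^{\R^8}_W V_i = I_i W$ (because $\nabla^{\R^8}_W \vec{r} = W$ and $I_i$ is constant) and apply the Gauss formula $\nabla^{S^7}_W V_i = (I_i W)^{\top}$, where $(\cdot)^{\top}$ is the tangential projection. When $W = V_j$ is vertical, $I_i W = I_i I_j \vec{r}$, and the quaternion relations $I_i I_j = \epsilon_{ijk} I_k$ (for $i \neq j$) and $I_i^2 = -1$, which follow from the convention (\ref{quaternionbasis}) that $I_i$ is right-multiplication by $e_i$, give $\nabla_{V_j} V_i = \epsilon_{ijk} V_k$. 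Dualizing yields $(\nabla^v \zeta_i)(V_j, V_k) = \epsilon_{ijk}$, i.e. $\nabla^v \zeta_i = \tfrac12 \epsilon_{ijk} \zeta_j \wedge \zeta_k$. When $W$ is horizontal, $I_i W$ is again horizontal (it is orthogonal to $\vec{r}$ and to every $V_j$, by skew-adjointness of $I_i$ and the same quaternion relations), so $\nabla_W V_i = I_i W$ with no tangential correction, and $\langle \nabla_W V_i, Z \rangle = \omega_i^\circ(W, Z)$ for horizontal $W, Z$. Comparing with Definition \ref{defn:coframes}, where $\bar{\omega}_i$ agrees with $\omega_i^\circ$ on horizontal vectors (the subtracted term $\tfrac12 \epsilon_{ijk} \zeta_j \wedge \zeta_k$ being exactly the $(2,0)$-part of $\omega_i^\circ$), gives $\nabla^h \zeta_i = \bar{\omega}_i$.

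For the Laplacians I would compute the vertical term directly. Writing $(\nabla^v \zeta_i)_{ab} = \epsilon_{ijk} (\zeta_j)_a (\zeta_k)_b$ and taking the covariant divergence on the first index, the term containing $\nabla^a (\zeta_j)_a = \div V_j$ vanishes by coclosedness, while the remaining term is $\epsilon_{ijk} \nabla_{V_j} \zeta_k = \epsilon_{ijk} \epsilon_{kjl} \zeta_l$. The contraction $\sum_{j,k} \epsilon_{ijk} \epsilon_{kjl} = -2\delta_{il}$ then yields $\nabla^* \nabla^v \zeta_i = 2\zeta_i$. For the total operator, $\zeta_i$ is a Killing coform, so the Bochner identity gives $\nabla^* \nabla \zeta_i = \Ric(\zeta_i) = 6\zeta_i$ on the round $S^7$. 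Since $\nabla = \nabla^v + \nabla^h$ splits the derivative direction, $\nabla^* \nabla \zeta_i = \nabla^* \nabla^v \zeta_i + \nabla^* \nabla^h \zeta_i$, and subtracting gives $\nabla^* \nabla^h \zeta_i = 4\zeta_i$.

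The main obstacle is bookkeeping rather than conceptual: pinning down the signs and the precise form of the relations $I_i I_j = \epsilon_{ijk} I_k$ in the convention (\ref{quaternionbasis}), and verifying that $\omega_i^\circ$ has no $(1,1)$ cross-term, so that its splitting into the $(2,0)$-piece $\tfrac12 \epsilon_{ijk} \zeta_j \wedge \zeta_k$ and the horizontal $(0,2)$-piece $\bar{\omega}_i$ is clean. One must also check that the identities $\nabla_{V_j} V_i = \epsilon_{ijk} V_k$ and $\div V_j = 0$ hold globally rather than at a single point, which they do by $\Sp(2)$-invariance, so that the pointwise divergence computation for $\nabla^* \nabla^v$ is legitimate.
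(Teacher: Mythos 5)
Your proposal is correct and follows essentially the same route as the paper: both arguments reduce everything to the flat ambient computation (yours phrased as $\nabla^{S^7}_W V_i = (I_i W)^{\top}$ via the Gauss formula, the paper's as $(\nabla_X \zeta_i)(Y) = \omega_i^\circ(X,Y)$ — metric-dual statements of the same fact), and both obtain $\nabla^*\nabla^h \zeta_i = 4\zeta_i$ by subtracting the vertical Laplacian from the full one using $\nabla = \nabla^v + \nabla^h$. The only variation is in how the eigenvalues are justified — you get $\nabla^*\nabla^v \zeta_i = 2\zeta_i$ by the $\epsilon$-contraction and $\nabla^*\nabla \zeta_i = 6\zeta_i$ from the Killing--Bochner identity, where the paper cites the eigenvalue of coclosed linear $1$-forms on $S^3$ and $S^7$ — which if anything makes your write-up more self-contained.
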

\begin{proof} From the Definition \ref{defn:coframes} and (\ref{zetaidef}), for $X, Y \in T S^7,$ we have
\begin{equation}\label{gradgammai:grad}
\left( \nabla_X \zeta_i \right)  (Y) = \left( \nabla^{\R^8}_X \zeta_i \right)(Y) = \left( \omega^x_i + \omega^y_i \right)(X,Y) = \omega^\circ_i (X, Y).
\end{equation}
Coclosedness of $\zeta_i$ and (\ref{gradgammai:gradvh}) follow directly from (\ref{gradgammai:grad}).

Since $\zeta_i$ is coclosed and equal to the restriction of a linear form, we have
\begin{equation}\label{gradgammai:spherelaplace}
\nabla^*\nabla_{S^3} \zeta_i = 2 \zeta_i, \qquad \nabla^*\nabla_{S^7} \zeta_i = 6 \zeta_i
\end{equation}
as can be verified directly from (\ref{gradgammai:grad}). Then (\ref{gradgammai:laplace}) follows from (\ref{gradgammai:spherelaplace}) and the fact that $\nabla = \nabla^v + \nabla^h.$
\end{proof}

\begin{lemma}\label{lemma:dalpha}
Let $\alpha = a + b$ be a 1-form on $S^7,$ with $a = f_i \zeta_i \in \Omega^1_v $ and $b \in  \Omega^1_h .$ Then
\begin{equation*}
\begin{split}
d \alpha = \,\, \stackrel{ (2,0) }{\overbrace{ d^v a }} & + \stackrel{ (1,1)}{ \overbrace{d^h \! f_i \wedge \zeta_i } } + \stackrel{ (0,2) }{\overbrace{2 f_i \bar{\omega}_i }} \\
& + d^v b \qquad + \, d^h b. \end{split}
\end{equation*}
\end{lemma}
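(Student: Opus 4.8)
The plan is to split $\alpha = a + b$ and compute $da$ and $db$ separately, using the Leibniz rule together with the structure equations of Lemma \ref{lemma:gradgammai} for the vertical piece, and the integrability of the vertical distribution for the horizontal piece.

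First I would expand the vertical contribution by the Leibniz rule (summing over $i$):
$$ da = d(f_i \zeta_i) = df_i \wedge \zeta_i + f_i\, d\zeta_i. $$
Writing $df_i = d^v f_i + d^h f_i$ with $d^v f_i \in \Omega^{(1,0)}$ and $d^h f_i \in \Omega^{(0,1)}$, the term $d^v f_i \wedge \zeta_i$ is a $(2,0)$-form and $d^h f_i \wedge \zeta_i$ is a $(1,1)$-form, while $df_i \wedge \zeta_i$ has no $(0,2)$ component. For the second term I would use
$$ d\zeta_i = \epsilon_{ijk}\, \zeta_j \wedge \zeta_k + 2\,\bar\omega_i, $$
which follows from the identity $(\nabla_X \zeta_i)(Y) = \omega^\circ_i(X,Y)$ established in the proof of Lemma \ref{lemma:gradgammai} (so that $d\zeta_i = 2\omega^\circ_i$, since $\nabla \zeta_i = \omega^\circ_i$ is already antisymmetric) combined with the Definition \ref{defn:coframes} relation $\bar\omega_i = \omega^\circ_i - \tfrac12\epsilon_{ijk}\zeta_j\wedge\zeta_k$. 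Here $\epsilon_{ijk}\zeta_j\wedge\zeta_k$ is the $(2,0)$ part and $2\bar\omega_i$ the $(0,2)$ part. Collecting by bidegree: the $(2,0)$ component of $da$ is $d^v f_i\wedge\zeta_i + f_i\,\epsilon_{ijk}\zeta_j\wedge\zeta_k$, which is by definition $d^v a$; the $(1,1)$ component is $d^h f_i\wedge\zeta_i$; and the $(0,2)$ component is $2f_i\bar\omega_i$.

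Next I would treat $db$ for $b \in \Omega^1_h$. The only point needing justification is that $db$ carries no $(2,0)$ component; granting this, its $(1,1)$ and $(0,2)$ parts are, by the notational convention preceding the lemma, exactly $d^v b$ and $d^h b$. To establish the vanishing of the $(2,0)$ part I would evaluate $db$ on two vertical vector fields $V, W$ via Cartan's formula,
$$ db(V,W) = V\!\left(b(W)\right) - W\!\left(b(V)\right) - b([V,W]). $$
Since $b$ annihilates vertical vectors, the first two terms vanish, leaving $db(V,W) = -\,b([V,W])$. The vertical distribution is tangent to the fibers of the Hopf fibration --- equivalently, it is spanned by the fundamental vector fields of the $\Sp(1)_r$-action --- hence it is involutive, so $[V,W]$ is again vertical and $b([V,W]) = 0$.

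Assembling the two computations yields the stated decomposition: the $(2,0)$ part comes entirely from $a$, while the $(1,1)$ and $(0,2)$ parts receive the additional contributions $d^v b$ and $d^h b$ from $b$. The only steps demanding genuine care --- and thus the main, if modest, obstacle --- are fixing the coefficient in $d\zeta_i$ (the factor of $2$ in $2f_i\bar\omega_i$ traces back to the antisymmetrization $d\zeta_i = 2\omega^\circ_i$) and keeping straight which wedge products land in which bidegree; the one conceptual input is the integrability of the vertical distribution.
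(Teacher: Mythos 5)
Your proof is correct and follows essentially the same route as the paper's: both rest on the computation $d\zeta_i = 2\omega_i^\circ = \epsilon_{ijk}\zeta_j\wedge\zeta_k + 2\bar\omega_i$ from Lemma \ref{lemma:gradgammai}, followed by the Leibniz rule and sorting by bidegree. Your additional Cartan-formula argument that $db$ has no $(2,0)$ component (via involutivity of the vertical distribution) makes explicit a point the paper subsumes under ``standard decomposition result,'' and is a worthwhile clarification rather than a deviation.
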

\begin{proof} This is a standard decomposition result, which can be seen directly as follows. By Lemma \ref{lemma:gradgammai}, we have
\begin{equation*}
d \zeta_i = 2 \omega_i^\circ, \quad d^v \zeta_i = \epsilon_{ijk} \zeta_j \wedge \zeta_k, \quad i = 1,2,3.
\end{equation*}
Therefore
\begin{equation}\label{dgammai}
d \zeta_i = \stackrel{ (2,0) }{\overbrace{ \epsilon_{ijk} \zeta_j \wedge \zeta_k }} + \stackrel{ (0,2) }{\overbrace{ 2 \bar{\omega}_i. }}
\end{equation}
The result follows directly from (\ref{dgammai}).
\end{proof}

\begin{lemma}\label{lemma:coclosed} If $a = f_i \zeta_i$ is (co)closed on each $S^3$ fiber, then $\left( \nabla^*\nabla^h  f_i \right) \zeta_i$ is again fiberwise (co)closed.
\end{lemma}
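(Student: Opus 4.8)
The plan is to reduce the statement to a single commutation property: that the horizontal rough Laplacian $\nabla^*\nabla^h$, acting on \emph{functions}, commutes with the vertical frame. Let $V_1,V_2,V_3$ be the vertical vector fields dual to $\zeta_1,\zeta_2,\zeta_3$. I would first record that, because each fiber is an orbit of $\Sp(1)_r$ and the $\zeta_i$ restrict to the left-invariant Maurer--Cartan coframe on it (with $d^v\zeta_i = \epsilon_{ijk}\zeta_j\wedge\zeta_k$ by Lemma \ref{lemma:gradgammai}), the $V_i$ are exactly the infinitesimal generators of the isometric $\Sp(1)_r$-action on $S^7$.

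Next I would make the two fiberwise conditions explicit. For $a = f_i\zeta_i$, restricting to a fiber and using $d^v\zeta_i = \epsilon_{ijk}\zeta_j\wedge\zeta_k$ together with the coclosedness of the $\zeta_i$ (Lemma \ref{lemma:gradgammai}), a short computation gives
\[
d^v a = (V_j f_i)\,\zeta_j\wedge\zeta_i + f_i\,\epsilon_{ijk}\,\zeta_j\wedge\zeta_k,
\qquad
d^{v*}a = -\,V_i f_i .
\]
Applying the fiberwise Hodge star to the first identity, fiberwise closedness and coclosedness become linear, constant-coefficient relations of the form
\[
\epsilon_{\ell j i}\,V_j f_i + 2 f_\ell = 0 \quad(\ell = 1,2,3),
\qquad\text{resp.}\qquad
V_i f_i = 0 ,
\]
the precise constants being immaterial; what matters is that both conditions are built only from the quantities $V_j f_i$ and $f_i$ with \emph{constant} coefficients.

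The key step is the commutation $\big[\,\nabla^*\nabla^h,\,V_i\,\big] = 0$ on $C^\infty(S^7)$. I would establish it by $\Sp(1)_r$-invariance: the group $\Sp(1)_r$ acts by isometries carrying each fiber to itself, hence preserves the metric, the Levi-Civita connection, and the vertical--horizontal splitting, and so preserves both $\pi_h$ and $\nabla^*$; therefore $\nabla^*\nabla^h$ is an $\Sp(1)_r$-invariant scalar operator. Differentiating the invariance relation $\nabla^*\nabla^h\circ\rho(q) = \rho(q)\circ\nabla^*\nabla^h$ at $q=e$ in the direction of $V_i$ yields the commutation, since the infinitesimal action of $V_i$ on functions is $f\mapsto V_i f$. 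Granting this, the conclusion is immediate: with $g_i = \nabla^*\nabla^h f_i$, the operator passes through every $V_j$, so
\[
\epsilon_{\ell j i}\,V_j g_i + 2 g_\ell = \nabla^*\nabla^h\!\big(\epsilon_{\ell j i}\,V_j f_i + 2 f_\ell\big) = 0,
\qquad
V_i g_i = \nabla^*\nabla^h\!\big(V_i f_i\big) = 0 ,
\]
so that $g_i\zeta_i$ is again fiberwise closed, resp. coclosed.

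The main obstacle is establishing the commutation cleanly. The subtle point is that $\nabla^*\nabla^h f$ is \emph{not} the intrinsic fiber Laplacian — there are O'Neill-type correction terms coming from the second fundamental form of the horizontal distribution — so the naive ``Casimir commutes with left-invariant fields'' argument does not directly apply; the $\Sp(1)_r$-invariance argument above is designed to sidestep these terms entirely. A secondary check, which I would dispatch by identifying the $\zeta_i$ with the left-invariant coframe on each fiber, is that the dual frame $V_i$ genuinely generates the isometric $\Sp(1)_r$-action (both the $V_i$ and the Killing generators being $\Sp(2)$-invariant vertical fields, it suffices to compare them on a single fiber).
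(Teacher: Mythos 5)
Your proof is correct and follows essentially the same route as the paper's: both reduce the lemma to the commutation of the vertical Killing fields (the paper's $U_j$, your $V_j$) with $\nabla^*\nabla^h$, combined with the observation that the fiberwise (co)closedness conditions are constant-coefficient expressions in $V_j f_i$ and $f_i$. The only cosmetic difference is that the paper gets the commutation by writing $\nabla^*\nabla^h = \nabla^*\nabla - \nabla^*\nabla^v$ and noting that the Killing fields commute with each term separately, whereas you argue directly that $\nabla^*\nabla^h$ is $\Sp(1)_r$-invariant because the isometric fiber-preserving action preserves the vertical--horizontal splitting; these are two phrasings of the same fact.
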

\begin{proof} Let $U_j$ be the dual vector field of $\zeta_j,$ for $j = 1,2,3.$ The Killing vector fields $U_j$ commute with the operators $\nabla^*\nabla$ and $\nabla^*\nabla^v,$ hence also with $\nabla^* \nabla^h = \nabla^*\nabla - \nabla^*\nabla^v.$ Since $d^v a$ and $(d^v)^* a = d^*a$ are determined by $U_j (f_i),$ we conclude that $\nabla^* \nabla^h$ preserves (co)closedness on the fibers.
\end{proof}

\begin{prop}\label{prop:laplacev} Let $\alpha = a + b$ be a 1-form as above, where $a = f_i \zeta_i$ and $b \in \Omega^1_h.$ The vertical component of the Laplacian on $S^7$ is given by
\begin{equation}\label{laplacev:maineq}
\left( \nabla^* \nabla \alpha \right)^v =  \nabla^* \nabla^v a + 4a + \left(\nabla^* \nabla^h \! f_i  + 2\LA d^h b, \bar{\omega}_i  \RA \right)\zeta_i.
\end{equation}
Here $\nabla^* \nabla^v$ denotes the Laplacian on the $S^3$ fiber.
\end{prop}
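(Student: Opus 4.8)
The plan is to use linearity, $\nabla^*\nabla\alpha = \nabla^*\nabla a + \nabla^*\nabla b$, and compute the vertical projection of each piece separately. Throughout I would write $U_\ell$ for the unit vertical field dual to $\zeta_\ell$ (as in Lemma \ref{lemma:coclosed}) and rely on three inputs from Lemma \ref{lemma:gradgammai}: the identity $(\nabla_X\zeta_i)(Y) = \omega_i^\circ(X,Y)$, and the eigenvalue facts $\nabla^*\nabla^v\zeta_i = 2\zeta_i$ and $\nabla^*\nabla\zeta_i = 6\zeta_i$. The decisive structural point is the decomposition $\omega_i^\circ = \tfrac12\epsilon_{ijk}\zeta_j\wedge\zeta_k + \bar{\omega}_i$ from Definition \ref{defn:coframes}, which shows that $\omega_i^\circ$ is a sum of a $(2,0)$- and a $(0,2)$-form and hence has \emph{no} $(1,1)$-component.

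For the vertical part $a = f_i\zeta_i$ I would apply the scalar-section product rule
$$\nabla^*\nabla(f_i\zeta_i) = (\nabla^*\nabla f_i)\,\zeta_i - 2\nabla_{\mathrm{grad}\,f_i}\zeta_i + f_i\,\nabla^*\nabla\zeta_i,$$
split the scalar Laplacian as $\nabla^*\nabla f_i = \nabla^*\nabla^v f_i + \nabla^*\nabla^h f_i$, and insert $\nabla^*\nabla\zeta_i = 6\zeta_i$. The only term mixing directions is $\nabla_{\mathrm{grad}\,f_i}\zeta_i$; since $(\nabla_X\zeta_i)(U_\ell) = \omega_i^\circ(X,U_\ell)$ and $\omega_i^\circ$ has no $(1,1)$-part, the horizontal gradient contributes nothing vertically, so $(\nabla_{\mathrm{grad}\,f_i}\zeta_i)^v = \nabla^v_{\mathrm{grad}^v f_i}\zeta_i$. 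Recognizing $(\nabla^*\nabla^v f_i)\zeta_i - 2\nabla^v_{\mathrm{grad}^v f_i}\zeta_i + 2f_i\zeta_i$ as the fiber Laplacian $\nabla^*\nabla^v a$ (here using $\nabla^*\nabla^v\zeta_i = 2\zeta_i$), the leftover is exactly $(\nabla^*\nabla^h f_i)\zeta_i + 4a$, giving $(\nabla^*\nabla a)^v = \nabla^*\nabla^v a + 4a + (\nabla^*\nabla^h f_i)\zeta_i$.

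For the horizontal part $b\in\Omega^1_h$ I would avoid the O'Neill curvature terms altogether by pairing against $\zeta_\ell$ through the pointwise identity $\Delta\langle b,\zeta_\ell\rangle = \langle\nabla^*\nabla b,\zeta_\ell\rangle + \langle b,\nabla^*\nabla\zeta_\ell\rangle - 2\langle\nabla b,\nabla\zeta_\ell\rangle$. Because $b$ is horizontal, $\langle b,\zeta_\ell\rangle\equiv 0$, and $\langle b,\nabla^*\nabla\zeta_\ell\rangle = 6\langle b,\zeta_\ell\rangle = 0$; hence $\langle\nabla^*\nabla b,\zeta_\ell\rangle = 2\langle\nabla b,\nabla\zeta_\ell\rangle = 2\langle\nabla b,\omega_\ell^\circ\rangle$. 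As $\omega_\ell^\circ$ is antisymmetric, only the skew part $db$ of $\nabla b$ survives, so this equals $2\langle db,\omega_\ell^\circ\rangle$. I would then use the decomposition of $\omega_\ell^\circ$ together with the vanishing $d^v b = 0$ for horizontal $b$ — since $(db)(U_j,U_k) = -b([U_j,U_k]) = 0$ because $[U_j,U_k]$ is vertical — to discard the $(2,0)$-component, leaving $2\langle d^h b,\bar{\omega}_\ell\rangle$. Summing over $\ell$ gives $(\nabla^*\nabla b)^v = 2\langle d^h b,\bar{\omega}_i\rangle\zeta_i$, and adding the two contributions yields \eqref{laplacev:maineq}.

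The main obstacle is the horizontal term: naively, the rough Laplacian of a horizontal $1$-form acquires a vertical component through the second fundamental form and O'Neill tensor of the Hopf fibration, and one must show this coupling is precisely $2\langle d^h b,\bar{\omega}_i\rangle$. The product-rule identity against $\zeta_\ell$ is what sidesteps a direct curvature computation, reducing everything to the already-established value $\nabla^*\nabla\zeta_\ell = 6\zeta_\ell$ and the vanishing of $d^v b$; verifying those two inputs, and justifying the replacement of $\nabla b$ by its skew part $db$, is the crux of the argument.
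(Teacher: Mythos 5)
Your proposal is correct and follows essentially the same route as the paper: the vertical part is the same product-rule computation based on Lemma \ref{lemma:gradgammai} (you use $\nabla^*\nabla\zeta_i = 6\zeta_i$ and subtract the fiber eigenvalue $2$, where the paper works directly with $\nabla^*\nabla^h\zeta_i = 4\zeta_i$), and the horizontal part uses the identical trick of differentiating the vanishing pairing $\langle b,\zeta_\ell\rangle \equiv 0$ twice against the eigenvalue $6$. Your explicit justification that only the skew part $db$ survives the pairing with $\omega_\ell^\circ$, and that $(db)^{(2,0)}=0$ for horizontal $b$, merely fills in a step the paper compresses into ``which may be rewritten as.''
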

\begin{proof}
We have
\begin{equation*}
\left( \nabla^*\nabla \alpha \right)^v = \left( \nabla^*\nabla a \right)^v + \left( \nabla^*\nabla b \right)^v.
\end{equation*}
For the first term, we write
\begin{equation}\label{laplacev:avfirst}
\left( \nabla^*\nabla a \right)^v = \nabla^*\nabla^v a + \left( \nabla^* \nabla^h a \right)^v
\end{equation}
and
\begin{equation*}
\begin{split}
\left( \nabla^* \nabla^h a \right)^v & = \left( \nabla^* \left( \nabla^h \! f_i \zeta_i + f_i \nabla^h \zeta_i \right) \right)^v \\
& = \left( \nabla^* \nabla^h f_i \right) \zeta_i + \left( \nabla^h f_i \nabla^*\zeta_i - \nabla f_i \intprod \bar{\omega}_i \right)^v + f_i \nabla^* \nabla^h \zeta_i \\
& = \left( \nabla^* \nabla^h f_i \right) \zeta_i + 4 f_i \zeta_i
\end{split}
\end{equation*}
where we have used Lemma \ref{lemma:gradgammai}. Then (\ref{laplacev:avfirst}) yields
\begin{equation}\label{laplacev:av}
\left( \nabla^*\nabla a \right)^v = \nabla^*\nabla^v a + 4 a + \nabla^* \nabla^h f_i \zeta_i
\end{equation}
which gives the case $b = 0$ of the formula (\ref{laplacev:maineq}).

Assuming now that $Y$ is horizontal and $b$ is of type $(0,1),$ we compute
\begin{equation*}
\begin{split}
0 \equiv \nabla_Y \LA \zeta_i , b \RA & = \LA \nabla_Y \zeta_i , b \RA + \LA \zeta_i, \nabla_X b \RA \\
\LA \zeta_i, \nabla_Y b \RA & = - \LA \bar{\omega}_i(Y, -) , b \RA.
\end{split}
\end{equation*}
For $U$ vertical, we have
\begin{equation*}
\begin{split}
0 \equiv \nabla_U \LA \zeta_i , b \RA & = \LA \nabla_U \zeta_i , b \RA + \LA \zeta_i, \nabla_U b \RA
\end{split}
\end{equation*}
and
\begin{equation*}
\LA \zeta_i, \nabla_U b \RA = 0.
\end{equation*}
Hence, for $X \in TS^7,$ we have
\begin{equation*}
\left( \nabla_X b \right)^v = - \zeta_i \LA \bar{\omega}_i(X, -) , b \RA.
\end{equation*}
Next, let $\{e_j\}_{j = 1}^7$ be an orthonormal basis of vector fields that satisfies $\nabla_{e_j} e_k = 0$ at a given point. We compute
\begin{equation*}
\begin{split}
0 & \equiv \nabla_{e_j} \nabla_{e_j} \LA \zeta_i , b \RA \\
& = \LA \nabla_{e_j} \nabla_{e_j} \zeta_i , b \RA + 2 \LA \omega_i^\circ \left(e_j, - \right), \nabla_{e_j} b \RA + \LA \zeta_i, \nabla_{e_j} \nabla_{e_j} b \RA \\
& = - 6 \LA \zeta_i , b \RA + 2 \LA \omega_i^\circ \left(e_j, - \right), \nabla_{e_j} b \RA + \LA \zeta_i, \nabla_{e_j} \nabla_{e_j} b \RA.
\end{split}
\end{equation*}
Since $\LA \zeta_i, b \RA \equiv 0,$ we are left with
\begin{equation*}
\LA \zeta_i, \nabla_{e_j} \nabla_{e_j} b \RA = -2 \LA \omega_i^\circ \left(e_j, - \right), \nabla_{e_j} b \RA
\end{equation*}
which may be rewritten as
\begin{equation}\label{laplacev:bv}
\left( \nabla^* \nabla b \right)^v = 2 \zeta_i \LA \bar{\omega}_i , d b \RA.
\end{equation}
Combining (\ref{laplacev:av}) and (\ref{laplacev:bv}) yields (\ref{laplacev:maineq}).
\end{proof}

\begin{rmk} The previous results carry over when $\nabla$ is coupled to a connection that is trivial along the fibers of the Hopf fibration.
\end{rmk}

\subsection{Vanishing of the vertical component} This subsection proves our vanishing theorem for the vertical component of an infinitesimal deformation in Coulomb gauge.


\begin{lemma}\label{lemma:horizproperties} Let $B$ be an ASD instanton on $S^4,$ and put $A = \pi^*B.$ Assume that $f$ is a section of $\pi^* \gothg_E \to S^7$ satisfying
\begin{equation}\label{horizproperties:assumption}
\nabla_A^h f = 0.
\end{equation}
Then $f = \pi^*h$ is the pullback of a section of $\gothg_E \to S^4,$ with
\begin{equation}\label{horizproperties:conclusion}
\nabla_B h = 0.
\end{equation}
\end{lemma}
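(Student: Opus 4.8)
The plan is to reduce everything to the single claim that the \emph{vertical} derivative $\nabla_A^v f$ vanishes; once that is known, the descent of $f$ and the identity $\nabla_B h = 0$ follow almost formally. The mechanism behind the vanishing is the non-integrability of the horizontal distribution of the Hopf fibration: the vertical part of the bracket of two horizontal vector fields is measured by the \emph{self-dual} frame $\{\bar\omega_i\}$, whereas the pulled-back curvature $F_A = \pi^* F_B$ is \emph{anti}-self-dual because $B$ is an ASD instanton. These two duality types cannot agree unless both vanish.

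Concretely, I would fix horizontal vector fields $X,Y$ and apply the curvature identity for the coupled connection to the section $f$,
\begin{equation*}
\nabla_X \nabla_Y f - \nabla_Y \nabla_X f - \nabla_{[X,Y]} f = F_A(X,Y) f.
\end{equation*}
Since $\nabla_A^h f = 0$, each of $\nabla_X f$ and $\nabla_Y f$ vanishes identically, so the first two terms drop out, and in $\nabla_{[X,Y]} f$ only the vertical part $[X,Y]^v$ of the bracket survives (the horizontal part is again killed by $\nabla_A^h f = 0$). Writing $U_i$ for the frame dual to $\zeta_i$ and using (\ref{dgammai}) together with $\zeta_i(X) = \zeta_i(Y) = 0$, I would compute $\zeta_i([X,Y]) = -d\zeta_i(X,Y) = -2\bar\omega_i(X,Y)$, whence $[X,Y]^v = -2\sum_i \bar\omega_i(X,Y)\,U_i$. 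This produces the identity
\begin{equation*}
F_A(X,Y) f = 2 \sum_i \bar\omega_i(X,Y)\, \nabla_{U_i} f,
\end{equation*}
valid for all horizontal $X,Y$.

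The decisive step is the duality dichotomy. Viewed as a $\gothg_E$-valued $2$-form in $(X,Y)$, the left-hand side equals $\pi^* F_B \cdot f$, which lies in $\Omega^{2-}_h \otimes \Omega^0(\gothg_E)$ because $B$ is anti-self-dual; the right-hand side lies in $\Omega^{2+}_h \otimes \Omega^0(\gothg_E)$ because the $\bar\omega_i$ frame $\Omega^{2+}_h$ (Definition \ref{defn:coframes}). As $\Omega^{2+}_h \cap \Omega^{2-}_h = 0$, both sides must vanish, and since the $\bar\omega_i$ are pointwise linearly independent, the vanishing of the right-hand side forces $\nabla_{U_i} f = 0$ for $i = 1,2,3$; that is, $\nabla_A^v f = 0$. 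Because $A = \pi^* B$ is a pullback connection, it is trivial along the fibers of the Hopf fibration, so a fiberwise-parallel section is fiberwise constant in the natural trivialization; as the fibers $S^3$ are connected, $\nabla_A^v f = 0$ gives $f = \pi^* h$ for a unique section $h$ of $\gothg_E \to S^4$. Substituting into the hypothesis yields $0 = \nabla_A^h f = \pi^*(\nabla_B h)$, and since $\pi$ is a submersion this gives (\ref{horizproperties:conclusion}).

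I expect the main obstacle to be the bookkeeping of the second paragraph, specifically verifying the anti-self-dual/self-dual mismatch. This hinges on matching orientation conventions so that $\pi^* F_B$ is genuinely anti-self-dual relative to the self-dual frame $\{\bar\omega_i\}$ on the horizontal space; this is precisely the compatibility that the conventions of \S\ref{sec:conventions} and Lemma \ref{lemma:instapullback} are engineered to supply, so I would lean on those rather than recompute the duality type by hand.
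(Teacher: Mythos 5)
Your proposal is correct and takes essentially the same route as the paper: both arguments reduce to the identity $[F_A,f] = 2\,(\nabla_{U_i}f)\,\bar{\omega}_i$ (you get it by evaluating the curvature identity on horizontal vector fields and computing $[X,Y]^v$ from $d\zeta_i$; the paper gets it by computing $D_A^2 f = D_A D_A^v f$ via Lemma \ref{lemma:dalpha}), followed by the same self-dual/anti-self-dual dichotomy and the same descent to $S^4$. The only difference is bookkeeping: contracting with horizontal fields from the outset lets you bypass the paper's auxiliary observations that $\nabla^v\nabla^h f = \nabla^h\nabla^v f$ and that $\left(D_A^v\right)^2 = 0$.
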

\begin{proof} We have
\begin{equation}\label{horizproperties:mixedpartials}
0 = \nabla^v \nabla^h f = \nabla^h \nabla^v f
\end{equation}
since $f$ is a $0$-form and the curvature $F_A$ is purely horizontal. By (\ref{horizproperties:assumption}), we have
$$D_A f = D_A^v f.$$
Applying $D_A$ to both sides, by Proposition \ref{lemma:dalpha}, we obtain
\begin{equation*}
\begin{split}
D_A^2 f = \LB F_A , f \RB & = D_A D_A^v f \\
& = (D_A^v)^2 f + D_A^h D_A^v f + 2 D^v f (U_i) \bar{\omega}_i.
\end{split}
\end{equation*}
By (\ref{horizproperties:mixedpartials}) and the fact that $\left( D_A^v \right)^2 = 0$ for a pullback connection, the first two terms vanish, yielding
\begin{equation}\label{horizproperties:curvaturecomparison}
\LB F_A , f \RB = 2 \left( \nabla_{U_i} f \right) \bar{\omega}_i.
\end{equation}
Each side of (\ref{horizproperties:curvaturecomparison}) is of type $(0,2);$ however, the LHS is anti-self-dual and the RHS is self-dual. Therefore both sides of (\ref{horizproperties:curvaturecomparison}) must vanish, giving
$$\nabla^v f = 0.$$
Hence, $f$ is constant on the fibers, so $f = \pi^*h$ for a section $h$ of $\gothg_E.$ Then (\ref{horizproperties:assumption}) implies (\ref{horizproperties:conclusion}), as desired.
\end{proof}

\begin{thm}\label{thm:vertvanishing}\label{thm:vanishing} Let $B$ be an irreducible ASD instanton on a bundle $E \to S^4,$ and $A = \pi^* B$ 
its pullback under the Hopf fibration. 
If $\alpha \in \Omega^1 \left( \pi^*\gothg_E \right)$ satisfies $\scrL_A \alpha = 0,$ then the vertical part of $\alpha$ vanishes.
\end{thm}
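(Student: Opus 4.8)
The plan is to reduce the equation $\scrL_A\alpha = 0$ to the Weitzenb\"ock identity of Proposition \ref{prop:weitz} and then project everything onto the vertical subbundle. Since $\scrL_A\alpha = 0$ forces both $d^*\alpha = 0$ and the instanton linearization $\phi\rintprod d\alpha = 0$, the left-hand side $dd^*\alpha + \phi\rintprod d(\phi\rintprod d\alpha)$ of $\scrL_A^2(\alpha)$ vanishes and the first-order term in (\ref{LAsquared}) drops out, so $\alpha$ satisfies the pure Yang--Mills stability equation
\begin{equation*}
\nabla_A^*\nabla_A\alpha + 6\alpha - 2\LB F_A\rintprod\alpha\RB = 0.
\end{equation*}
The key structural observation is that, because $F_A = \pi^*F_B$ is a $(0,2)$-form, the curvature term is purely horizontal: $\LB F_A\rintprod\alpha\RB(Y) = \sum_j\LB F_A(e_j,Y),\alpha(e_j)\RB$ vanishes whenever $Y$ is vertical. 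Hence the \emph{vertical} projection of the stability equation contains no curvature term at all.

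Writing $\alpha = a + b$ with $a = f_i\zeta_i\in\Omega^1_v$ and $b\in\Omega^1_h$, I would apply Proposition \ref{prop:laplacev} (coupled to the fiberwise-trivial connection $A$) to compute the vertical part of $\nabla_A^*\nabla_A\alpha$. Together with the $6\alpha$ term this produces a second-order equation for $a$ in which $b$ enters only through the self-dual horizontal quantities $\LA d^h b,\bar{\omega}_i\RA$. To eliminate this coupling I would use the \emph{vertical} component of the instanton condition $\phi\rintprod d\alpha = 0$: by Lemma \ref{lemma:dalpha} and the explicit expression (\ref{phistdsp2def}) for $\phi_{std}$, contracting $\phi\rintprod d\alpha$ with a vertical vector $U_k$ expresses $\LA d^h b,\bar{\omega}_k\RA$ in terms of the fiberwise curl $(*_v d^v a)_k$ and the coefficient $f_k$, carrying the asymmetric signs $(+,+,-)$ inherited from $\phi_{std}$.

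Substituting and pairing with $a$ in $L^2(S^7)$, the mixed $a$--$b$ terms close up into a single identity of the schematic form
\begin{equation*}
Q(a) + \sum_i\|\nabla_A^h f_i\|^2 = 0, \qquad Q(a) = \|\nabla_A^v a\|^2 + c_0\|a\|^2 - 2\int_{S^7}\sum_k\epsilon_k\LA (*_v d^v a)_k, f_k\RA,
\end{equation*}
where $c_0$ is a geometric constant and $\epsilon = (+,+,-)$. The form $Q$ is purely fiberwise (it involves only vertical derivatives of $a$ and zeroth-order terms), so the crux is to show it is nonnegative, i.e. that the indefinite curl cross-term is dominated by the fiberwise Laplacian and zeroth-order contributions. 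I would establish this by harmonic analysis on the $S^3$ fibers: the structure equations $d^v\zeta_i = \epsilon_{ijk}\zeta_j\wedge\zeta_k$ of Lemma \ref{lemma:gradgammai} diagonalize the curl $*_v d^v$ on each $\SU(2)$-isotypic component, the spectra of $*_v d^v$ and $\nabla^*\nabla^v$ can be read off explicitly, and a mode-by-mode estimate should show $Q\geq 0$, strictly positive on every fiber mode except the lowest (left-invariant) directions.

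Consequently both summands vanish; in particular $\nabla_A^h f_i = 0$ for every $i$, at which point Lemma \ref{lemma:horizproperties} shows each $f_i$ descends to a $B$-parallel section of $\gothg_E\to S^4$, so irreducibility of $B$ gives $f_i = 0$ and therefore $a = 0$. I expect the main obstacle to be precisely the fiberwise spectral bookkeeping of the third step: the sign asymmetry in $\phi_{std}$ prevents the curl cross-term from being a clean self-adjoint pairing, so one must verify positivity of $Q$ carefully across all $S^3$ harmonics and confirm that its residual zero-directions coincide exactly with the left-invariant modes that are annihilated by the horizontal gradient term $\sum_i\|\nabla_A^h f_i\|^2$. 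This is the delicate analysis of the Weitzenb\"ock formula anticipated in the introduction.
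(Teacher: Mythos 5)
Your proposal is, in outline, the paper's own proof: the same Weitzenb\"ock reduction via Proposition \ref{prop:weitz}, the same observation that $\LB F_A \rintprod \alpha \RB$ is purely horizontal, the same use of Proposition \ref{prop:laplacev}, the same constraint obtained by contracting $\phi \rintprod d\alpha = 0$ with the vertical vectors $U_k$ (this is exactly the paper's (\ref{ciformulafirst})--(\ref{ciformula}), with your signs $(+,+,-)$ and your curl cross-term equal to $2(\mu(a),a)$), and the same endgame via Lemma \ref{lemma:horizproperties} plus irreducibility. Your identity $Q(a) + \sum_i \| \nabla^h_A f_i \|^2 = 0$, with $c_0 = 2$, is precisely equation (\ref{decomp0}) paired with $a$ in $L^2(S^7)$.

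The one substantive difference is \emph{what you pair against}, and that is exactly where your writeup has its gap. The paper does not pair with $a$: it splits $a = a_{cl} + a_{cocl}$ into fiberwise closed and coclosed parts and pairs (\ref{decomp1}) with $a_{cocl}$ alone (Lemma \ref{lemma:coclosed} is what makes the horizontal term reduce to $\sum_i \|\nabla^h g_i\|^2$ under this pairing). Because $\mu(a)$ is built entirely from $d^v a = d^v a_{cocl}$ and satisfies $|\mu(a)| = |d^v a|$ pointwise (\ref{munorm}), a single Cauchy--Schwarz against the single spectral fact $\| d^v a \| \geq 2 \| a_{cocl} \|$ gives nonnegativity at once; then $a_{cocl} = 0$, hence $\mu(a) = 0$, and the residual equation for the closed part is a sum of manifestly nonnegative terms, so $a_{cl}$ dies with no spectral analysis and no further use of irreducibility. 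By pairing with the full $a$ you must instead prove $Q \geq 0$, where the sign-twisted curl couples $a_{cocl}$ to $a_{cl}$; this is the step you leave as ``should show,'' i.e.\ asserted rather than proved, and it is the crux of the theorem.

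The good news is that your $Q \geq 0$ is true and does not require the mode-by-mode diagonalization you anticipate. By the fiberwise Bochner formula, $\|\nabla^v a\|^2 + 2\|a\|^2 = \|d^v a\|^2 + \|(d^v)^* a\|^2$; Cauchy--Schwarz gives $|(\mu(a),a)| \leq \|d^v a\| \, \|a\|$; and one needs \emph{two} spectral gaps, namely $\|d^v a\| \geq 2 \|a_{cocl}\|$ (eigenvalue $4$ on coclosed $1$-forms of $S^3$) and $\|(d^v)^* a\| \geq \sqrt{3}\, \|a_{cl}\|$ (eigenvalue $3$ on functions, since $a_{cl}$ is fiberwise exact). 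Writing $p = \|a_{cocl}\|,$ $q = \|a_{cl}\|,$ $X = \|d^v a\| \geq 2p,$ these reduce $Q \geq 0$ to the elementary inequality
\begin{equation*}
X^2 + 3q^2 - 2X\sqrt{p^2 + q^2} \;\geq\; 0 \qquad \text{for } X \geq 2p,
\end{equation*}
which follows by minimizing the quadratic in $X$ over $X \geq 2p$ (in the boundary case $X = 2p$ it is equivalent to $8p^2q^2 + 9q^4 \geq 0$). Note also that once $Q \geq 0$ is known, no characterization of its kernel is needed --- $Q$ genuinely has one, e.g.\ fiberwise-constant combinations of $\zeta_1, \zeta_2$ tensored with parallel sections --- because, as you correctly conclude, both summands vanish separately and $\nabla^h_A f_i = 0$ for \emph{all} $i$ then forces $a = 0$ via Lemma \ref{lemma:horizproperties} and irreducibility. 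So your route closes, at the cost of one extra spectral ingredient and this two-variable estimate; the paper's pairing with $a_{cocl}$ is the device that makes all of it unnecessary.
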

\begin{proof} We write $\nabla = \nabla_A$ and $d = D_A$ throughout the proof. 

Decompose $\alpha = a + b$ into vertical and horizontal parts as above. Let $a = f_i \zeta_i,$ and write the self-dual part of $d^h b$ as 
\begin{equation*}
\left( d^h b \right)^+ = c_i \bar{\omega}_i .
\end{equation*}
From Proposition \ref{lemma:dalpha} and (\ref{phistdsp2def}), our assumption $\scrL_A \alpha = 0$ implies
\begin{equation}\label{phidalpha}
\begin{split}
0 = \phi \rintprod d \alpha
& = \stackrel{(1,0)}{ \overbrace{ \quad \qquad \nu \rintprod d^v a \qquad \quad } } + \stackrel{(0,1)}{ \overbrace{ \phi_{1,2} \rintprod \left( d^h a + d^v b \right) }} \\
& \,\, + \phi_{1,2} \rintprod \left( 2 f_i \bar{\omega}_i + d^h b \right).
\end{split}
\end{equation}
Noting that $\bar{\omega}_i \rintprod \bar{\omega}_i = 2,$ according to(\ref{phistdsp2def}), the $(1,0)$-part of (\ref{phidalpha}) comes out to
\begin{equation*}
\begin{split}
0 & = \zeta_1 \left( *d^v a(U_1) + 4 f_1 + 2 c_1 \right)  \\
& + \zeta_2  \left( *d^v a(U_2) + 4 f_2 + 2 c_2 \right) \\
& + \zeta_3 \left( *d^v a(U_3) - 4 f_3 - 2 c_3 \right)
\end{split}
\end{equation*}
where $U_j$ is the dual vector field to $\zeta_j,$ for $j = 1,2,3,$ as above. We conclude that
\begin{equation}\label{ciformulafirst}
\begin{split}
c_1 & = -2 f_1 - \frac12 *d^v a (U_1) \\
c_2 & = -2 f_2 - \frac12 *d^v a (U_2) \\
c_3 & = -2 f_3 + \frac12 *d^v a (U_3).
\end{split}
\end{equation}
We rewrite this as
\begin{equation}\label{ciformula}
c_i \zeta_i = -2a + \frac{1}{2} \mu(a)
\end{equation}
where $\mu: \Omega^1_v \to \Omega^1_v,$ defined by (\ref{ciformulafirst}), obeys
\begin{equation}\label{munorm}
|\mu (a)| = |*d^v a | = |d^v a |.
\end{equation}
Now, the decomposition of the Laplacian (\ref{laplacev:maineq}) reads
\begin{equation}\label{laplacedecompfordef}
\begin{split}
\left( \nabla^* \nabla \alpha \right)^v & = \nabla^* \nabla^v a + 4 a + 4 c_i\zeta_i  + \left( \nabla^* \nabla^h \! f_i \right) \zeta_i \\
& = \nabla^* \nabla^v a - 4 a + 2 \mu(a)  + \left( \nabla^* \nabla^h \! f_i \right) \zeta_i 
\end{split}
\end{equation}
where we have used (\ref{ciformula}). Returning to (\ref{LAsquared}), we have
\begin{equation*}
\begin{split}
0 = \mathscr{L}_A^2 \alpha & = 2 \phi_{std} \rintprod \alpha + \nabla^*\nabla \alpha - 2 \LB F_A \rintprod \alpha \RB + 6 \alpha \\
& = \nabla^*\nabla \alpha - 2 \LB F_A \rintprod b \RB + 6 \alpha.
\end{split}
\end{equation*}
Taking the vertical part and inserting (\ref{laplacedecompfordef}), since $F_A$ is purely horizontal, we obtain
\begin{equation}\label{decomp0}
\begin{split}
0 = \left( \mathscr{L}_A^2 \alpha \right)^v = \nabla^* \nabla^v a + 2a + 2 \mu(a)  + \left( \nabla^* \nabla^h \! f_i \right) \zeta_i.
\end{split}
\end{equation}
Applying the Bochner formula on $S^3$ yields
\begin{equation}\label{decomp1}
0 = \left( d^* d^v + d^v d^* \right) a + 2 \mu(a)  + \left( \nabla^* \nabla^h \! f_i \right) \zeta_i.
\end{equation}
We now split $a$ into fiberwise closed and coclosed parts
$$a = a_{cl} + a_{cocl}$$
and write
$$a_{cocl} = g_i \zeta_i.$$
Taking an inner product with $a_{cocl}$ in (\ref{decomp1}), and integrating over $S^7,$ yields
\begin{equation}\label{decomp2}
0 = \| d^v a \|^2 + 2 (a_{cocl}, \mu(a))  + \left( g_i, \nabla^* \nabla^h \! g_i  \right).
\end{equation}
Here we have used the orthogonality between fiberwise closed and coclosed vertical 1-forms, the fact that $d^*a_{cocl} = (d^v)^*a_{cocl} = 0,$ and Lemma \ref{lemma:coclosed}.

Note that the first eigenvalue of the Hodge Laplacian on coclosed 1-forms on $S^3$ is 4, so
$$\| d^v a \| \geq 2 \| a_{cocl} \|.$$
Inserting this into (\ref{decomp2}), using Cauchy-Schwarz 
and (\ref{munorm}), we obtain
\begin{equation*}
\begin{split}
0 &\geq 2 \| a_{cocl} \| \| d^v a \| - 2 \| a_{cocl} \| \| \mu(a) \|  + \sum_{i = 1}^3 \| \nabla^h \! g_i \|^2 \\
& \geq 2 \| a_{cocl} \| \| d^v a \| - 2 \| a_{cocl} \| \| d^v a \| +\sum_{i = 1}^3 \| \nabla^h \! g_i \|^2 \\
& \geq \sum_{i = 1}^3 \| \nabla^h \! g_i \|^2.
\end{split}
\end{equation*}
This yields
$$\nabla^h g_i \equiv 0$$
for $i = 1,2,3.$
Since $B$ is assumed irreducible, Lemma \ref{lemma:horizproperties} implies that $g_i \equiv 0,$ and hence $a_{cocl} \equiv 0.$

But then $a = a_{cl}$ is fiberwise closed, so $\mu(a) = 0$ by (\ref{munorm}). Since the RHS of (\ref{decomp0}) is then a positive operator, 
we conclude that $a \equiv 0,$ as desired.
\end{proof}

\subsection{Space of infinitesimal deformations} This subsection proves our main theorem, which calculates the deformations of a pulled-back instanton on $S^7$ in terms of the ASD deformations on $S^4.$

Consider the operator 
$$\delta = \left( \phi \rintprod d(-) \right)^h : \Omega^1_h \to \Omega^1_h$$
given by the restriction of $\phi \rintprod d(-)$ to the space of horizontal 1-forms. For $b \in \Omega^1_h,$ we have 
\begin{equation*}
\delta(b) = \left( \phi \rintprod d b \right)^h = \left( \phi^{(1,2)} \rintprod d^v b \right)^h.
\end{equation*}
Let $S^3_0 \subset \{ (x, 0) \} \subset \H^2$ be the fiber of the Hopf fibration contained in the $x$-axis, and let
$$\delta_0= \left. \delta \right|_{S^3_0}.$$
Notice that
\begin{equation*}
\begin{split}
dy^j \rintprod \zeta_i & = \left( \zeta_i^y \right)^j = \omega_{ijk} y^k.
\end{split}
\end{equation*}
We may therefore define a local frame for $\Omega^1_h$ near $S^3_0$ by
\begin{equation*}
\bar{e}^j = dy^j - \zeta_i \omega_{ijk} y^k, \qquad j = 0, \ldots, 3.
\end{equation*}
We calculate
\begin{equation*}
\begin{split}
\left. d \bar{e}^j \right|_{S_0^3} & = \omega_{ijk} \zeta_i \wedge \bar{e}^k \\
& = \left. d^v \bar{e}^j \right|_{S_0^3}.
\end{split}
\end{equation*}
Then
\begin{equation*}
\begin{split}
\delta_0 \left( \bar{e}^j \right) = \left( \phi \rintprod d^v \bar{e}^j \right)^h & = - \left( \omega_{1jk} \omega_{1 k \ell} + \omega_{2jk} \omega_{2 k \ell} - \omega_{3jk} \omega_{3 k \ell}  \right) \bar{e}^\ell \\
\end{split}
\end{equation*}
and
\begin{equation}\label{phi0ebar}
\delta_0 \left( \bar{e}^j \right) = \bar{e}^j.
\end{equation}

\begin{lemma}\label{lemma:horiz} The kernel of $\delta_0$ consists of sections of the form
\begin{equation*}
L_{ij} x^i \bar{e}^j
\end{equation*}
where $L \in \mathfrak{F}$ is a Fueter map of $\R^4,$ according to (\ref{fueter}) above.
\end{lemma}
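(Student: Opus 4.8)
The plan is to turn the kernel equation into a first-order system along the fiber $S^3_0$ and recognize it as the Fueter equation. First I would evaluate $\varphi_0$ on an arbitrary horizontal $1$-form written in the frame as $b = h_j\bar{e}^j$, with $h_j$ functions on $S^3_0$. Since $d^v b = (U_i h_j)\,\zeta_i\wedge\bar{e}^j + h_j\,d^v\bar{e}^j$ by Leibniz, the second term reproduces exactly the contraction already performed in (\ref{phi0ebar}) and contributes $h_\ell\bar{e}^\ell = b$. Computing the remaining contraction $\bigl(\phi\rintprod(\zeta_i\wedge\bar{e}^j)\bigr)^h = -\,\epsilon_i\,\omega_{ij\ell}\,\bar{e}^\ell$ (with $(\epsilon_1,\epsilon_2,\epsilon_3)=(1,1,-1)$ reflecting the signs in (\ref{phistdsp2def})), I expect to obtain
\[
\varphi_0(b) = \bigl(h_\ell - \epsilon_i\,\omega_{ij\ell}\,U_i h_j\bigr)\bar{e}^\ell,
\]
so that $\varphi_0 = \mathrm{id} - \scrD$ for a Dirac-type operator $\scrD$ built from the three vertical Killing fields $U_i$ and the constants $\omega_{ij\ell}$. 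The kernel condition thereby becomes the eigenvalue equation $\scrD h = h$.

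For the inclusion that is immediate, I would substitute the linear ansatz $h_j = L_{ij}x^i$. Along the fiber $U_i$ generates right-multiplication by $e_i$, so on the coordinate functions it acts through the complex structures of \S\ref{ss:quatsandfueter}, $U_i x^m = \omega_{iqm}x^q = (I_i x)^m$. Substituting and matching coefficients of $x^p$, the equation $\scrD h = h$ collapses to a purely algebraic identity on the endomorphism $L$, namely $L + \sum_i \epsilon_i\, I_i L I_i = 0$, which is exactly the Fueter condition (\ref{fueter}). This establishes $\{\,L_{ij}x^i\bar{e}^j \mid L\in\mathfrak{F}\,\}\subseteq\ker\varphi_0$, a $12$-dimensional space.

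The substantive point is completeness: that there are no further solutions. Here I would exploit that each $U_i$ is a Killing field, so $\scrD$ commutes with the fiberwise scalar Laplacian $\nabla^*\nabla^v$ and preserves every $S^3$-harmonic eigenspace. Decomposing $h_j$ by harmonic degree $k$, the problem reduces to the finite-dimensional one of diagonalizing $\scrD$ on each degree-$k$ space of frame-valued harmonics, governed by $\mathrm{SU}(2)$ representation theory (Peter--Weyl on $S^3=\mathrm{SU}(2)$, with the $U_i$ acting on one tensor factor and the frame $\bar{e}^j$ transforming as the fundamental). The claim then amounts to showing that the eigenvalue $+1$ of $\scrD$ occurs only in degree $k=1$, where its eigenspace is precisely the $12$-dimensional Fueter subspace of the $16$-dimensional space of all linear sections. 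This spectral computation is the main obstacle; the dimension bookkeeping ($16-4=12$, with the complementary $4$-dimensional piece carrying a different eigenvalue) is the consistency check the calculation must reproduce.
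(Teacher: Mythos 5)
Your first-order reduction is sound and matches the paper: on the frame $\bar{e}^j$ the operator $\varphi_0$ is the identity plus a Dirac-type operator $\scrD$ built from the vertical Killing fields $U_i$ and the structure constants $\omega_{ij\ell}$ (up to a sign convention in the contraction, which does not affect the structure of the argument), and your verification that the linear ansatz $h_j = L_{ij}x^i$ turns the kernel equation into the algebraic Fueter condition (\ref{fueter}) is exactly the paper's concluding substitution. The gap is in the completeness direction, which is the actual content of the lemma: you reduce it to the assertion that the relevant eigenvalue of $\scrD$ occurs only on degree-$1$ spherical harmonics, and then stop, explicitly calling this spectral computation ``the main obstacle.'' Nothing in the proposal establishes that claim; Peter--Weyl tells you the problem is finite-dimensional on each isotypic piece, but you still have to diagonalize $\scrD$ there, and that is precisely the work left undone.

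The paper closes this step without any representation theory, by squaring the operator. A direct computation (using $[U_i,U_j] = $ structure constants and $\nabla_{U_i}U_i = 0$) gives the Weitzenbock-type identity
\begin{equation*}
\varphi_0^2(\alpha) \;=\; \Delta_{S^3}\,\alpha \;+\; 4\,\varphi_0(\alpha) \;-\; 3\,\alpha ,
\end{equation*}
where $\Delta_{S^3}$ acts componentwise. Hence $\varphi_0(\alpha)=0$ forces $\Delta_{S^3}\alpha_j = 3\alpha_j$, and the eigenvalue-$3$ eigenfunctions on the unit $S^3$ are exactly restrictions of linear functions on $\R^4$; this is what forces your linear ansatz rather than merely being consistent with it, and your substitution then finishes the proof. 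Incidentally, the same identity would complete your route: on degree-$k$ harmonics it shows the eigenvalues of $\varphi_0$ are $k+2$ and $-k$ (so of $\scrD$ are $k+1$ and $-k-1$ in your normalization), whence the kernel can only occur at $k=1$ --- but this fact has to be derived, and the squaring identity is the efficient way to derive it.
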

\begin{proof}
Let $\alpha = \alpha_j \bar{e}^j$ be an arbitrary section of $\Omega^1_h$ near $S^3_0.$ According to (\ref{phi0ebar}) and (\ref{phis30}), we have
\begin{equation}\label{phi0fueter}
\begin{split}
\delta_0 \left( \alpha \right) & = \phi \rintprod \left( U_i \left( \alpha_j \right) \zeta^i \wedge \bar{e}^j  \right) + \alpha_j \bar{e}^j \\
& = \left( U_1 \left( \alpha_j \right) \omega_{1 j \ell} + U_2 \left( \alpha_j \right) \omega_{2 j \ell} - U_3 \left( \alpha_j \right) \omega_{3 j \ell} + \alpha_\ell \right) \bar{e}^\ell \\
& =: \beta_\ell \bar{e}^\ell.
\end{split}
\end{equation}
Further, we calculate
\begin{equation*}
\begin{split}
\delta_0^2 \left( \alpha \right) \rintprod \bar{e}^i & = U_1 \left( \beta_\ell \right) \omega_{1 \ell i} + U_2 \left( \beta_\ell \right) \omega_{2 \ell i} - U_3 \left( \beta_\ell \right) \omega_{3 \ell i} + \beta_i \\
& = \omega_{1 \ell i} U_1 \left(U_1 \alpha_j \omega_{1j\ell} + U_2 \alpha_j \omega_{2j\ell} - U_3 \alpha_j \omega_{3j\ell} + \alpha_\ell \right) \\
& \quad + \omega_{2 \ell i}  U_2 \left( U_1 \alpha_j \omega_{1j\ell} + U_2 \alpha_j \omega_{2j\ell} - U_3 \alpha_j \omega_{3j\ell} + \alpha_\ell \right) \\
& \quad - \omega_{3 \ell i} U_3 \left( U_1 \alpha_j \omega_{1j\ell} + U_2 \alpha_j \omega_{2j\ell} - U_3 \alpha_j \omega_{3j\ell} + \alpha_\ell \right) \\
& \quad + \beta_i \\
& = - \left( U_1^2 + U_2^2 + U_3^2 \right) \alpha_i \\
& \quad + \omega_{2j\ell} \omega_{1 \ell i} U_1 U_2\left( \alpha_j \right) + \omega_{1 j \ell} \omega_{2 \ell i} U_2 U_1 \left( \alpha_j \right) \\
& \quad - \omega_{3j\ell} \omega_{1 \ell i} U_1 U_3 \left( \alpha_j \right) - \omega_{1 j \ell} \omega_{3 \ell i} U_3 U_1 \left( \alpha_j \right) \\
& \quad - \omega_{3j\ell} \omega_{2 \ell i} U_2 U_3 \left( \alpha_j \right) - \omega_{2 j \ell} \omega_{3 \ell i} U_3 U_2 \left( \alpha_j \right) \\
& \quad + \left( \beta_i - \alpha_i \right) + \beta_i \\
& = - \left( U_1^2 + U_2^2 + U_3^2 \right) \alpha_i  + \omega_{3 j i} \LB U_1, U_2 \RB \alpha_j + \omega_{2 j i} \LB U_1, U_3 \RB \alpha_j - \omega_{1 j i} \LB U_2, U_3 \RB \alpha_j  \\
& \quad + 2\beta_i - \alpha_i  \\
& = - \left( U_1^2 + U_2^2 + U_3^2 \right) \alpha_i  + 2 \left( - \omega_{3 ji } U_3 + \omega_{2ji} U_2 + \omega_{1ji} U_1 \right) \alpha_j \\
& \quad + 2\beta_i - \alpha_i.
\end{split}
\end{equation*}
Again applying (\ref{phi0fueter}), we obtain
\begin{equation}\label{horiz:phi02}
\delta_0^2 \left( \alpha \right) \rintprod \bar{e}^i = - \left( U_1^2 + U_2^2 + U_3^2 \right) \alpha_i  + 4\beta_i - 3 \alpha_i.
\end{equation}
Notice from (\ref{gradgammai:gradvh}) that $\nabla_{U_i} U_i = 0.$ 
Hence the first term on the RHS of (\ref{horiz:phi02}) is the Laplace-Beltrami operator on $S^3_0,$ applied component-wise to $\alpha.$
We conclude that
\begin{equation*}
\delta_0^2 \left( \alpha \right) = \Delta_{S^3} \alpha + 4 \delta_0 \left( \alpha \right) - 3 \alpha.
\end{equation*}
In particular, if $\delta_0(\alpha) = 0,$ we have
\begin{equation*}
\Delta_{S^3} \alpha_j = 3 \alpha_j.
\end{equation*}
Hence, $\alpha_j$ lies in the first nonzero eigenspace, and is the restriction of a linear function on $\R^4:$
$$\alpha_j = x^i L_{ij}.$$
Substituting back into (\ref{phi0fueter}), we have
\begin{equation*}
\delta_0(\alpha) = x^i \left( L_{ij} + \omega_{1ik} L_{k\ell}\omega_{1 \ell j} + \omega_{2 i k} L_{k\ell}\omega_{2 \ell j} - \omega_{3 i k} L_{k \ell}\omega_{3 \ell j} \right) \bar{e}^j = 0
\end{equation*}
which recovers (\ref{fueter}), as desired.
\end{proof}

\begin{prop}\label{prop:kerphi} The kernel of $\delta$ is the 
subspace of $\Omega^1_h$ given by
\begin{equation}\label{3subsheaves}
\pi^* \Omega^1_{S^4} \oplus  I_1 \left( \pi^* \Omega^1_{S^4} \right) \oplus I_2 \left( \pi^* \Omega^1_{S^4} \right).
\end{equation}
Here the complex structures $I_1$ and $I_2$ act pointwise on $\Omega^1_h \subset \Omega^1_{\R^8}.$
\end{prop}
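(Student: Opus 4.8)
The plan is to exploit that $\varphi$ is both $\Sp(2)$-invariant and \emph{fiberwise}, so that its kernel is completely pinned down by Lemma \ref{lemma:horiz}. Since $\varphi(b) = (\phi^{(1,2)} \rintprod d^v b)^h$ involves only the vertical derivative $d^v$ together with an algebraic contraction, it commutes with restriction to each Hopf fiber; thus $b \in \ker \varphi$ if and only if $b|_{S^3_p} \in \ker(\varphi|_{S^3_p})$ for every $p \in S^4$. Moreover $\varphi$ is built from $\Sp(2)$-invariant data (the form $\phi_{std}$ and the fibration), hence commutes with the $\Sp(2)$-action; as $\Sp(2)$ is transitive on the fibers, the fiberwise kernel over any $S^3_p$ is the $\Sp(2)$-translate of the kernel over the distinguished fiber $S^3_0$. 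It therefore suffices to identify $\ker \varphi_0$ on $S^3_0$ and then propagate the description by equivariance. Throughout, the right-hand side of the proposition is understood as a space of sections of $\Omega^1_h$, the three summands being the pullback forms and their pointwise $I_a$-rotations.

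Over $S^3_0$, Lemma \ref{lemma:horiz} gives $\ker \varphi_0 = \{ L_{ij} x^i \bar{e}^j \mid L \in \mathfrak{F} \}$, and I would split this using the decomposition $\mathfrak{F} = \H_l \oplus \H_l I_1 \oplus \H_l I_2$ from (\ref{fueterfactors}). The heart of the argument is matching the three summands with the restrictions to $S^3_0$ of $\pi^* \Omega^1_{S^4}$, $I_1(\pi^*\Omega^1_{S^4})$, and $I_2(\pi^*\Omega^1_{S^4})$. For this I would work in the affine chart $w = y x^{-1}$ near $[1:0]$, in which $d\pi|_{(x,0)} = dy\, x^{-1}$, so that $\bar{e}^j = dy^j$ on $S^3_0$ and a pullback form $\pi^*\beta_c$ (with $\beta_c = \Re(\bar c\,\cdot)$) restricts to $\sum_j (cx)_j\, dy^j = \Re(\bar x \bar c\, dy)$. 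Writing $L \in \H_l$ as left multiplication by $q$, a short computation gives $L_{ij} x^i \bar{e}^j = \Re(\bar x q\, dy)$, which matches the pullback upon setting $q = \bar c$; hence $\{L \in \H_l\}$ is exactly the restriction of $\pi^*\Omega^1_{S^4}$. The same computation for $L \in \H_l I_1$ (left multiplication by $q$ postcomposed with right multiplication by $e_1$) yields $\Re(e_1 \bar x q\, dy)$, which is precisely $I_1$ applied pointwise to the pullback, and analogously for $I_2$. This recovers the three summands, and explains the absence of an $I_3$-copy, since $\H_l I_3 \not\subset \mathfrak{F}$ by the sign in (\ref{fueter}).

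To conclude, I would verify both inclusions globally. For the inclusion $\supseteq$: each of $\pi^*\beta$, $I_1 \pi^*\beta$, $I_2 \pi^*\beta$ lies in $\ker \varphi$, since by fiberwise-ness it suffices to check on each $S^3_p$, and writing $p = g \cdot [1:0]$ with $g \in \Sp(2)$ reduces to the $S^3_0$ computation via the naturality $g^* \pi^*\beta = \pi^*(g^*\beta)$ together with the fact that $I_a$ commutes with $\Sp(2)$. For the reverse inclusion $\subseteq$: given $b \in \ker \varphi$, its restriction to each fiber lies in the $\Sp(2)$-translate of $\ker \varphi_0$, hence decomposes uniquely, by the direct-sum decomposition of $\mathfrak{F}$, into an $\H_l$-part, an $\H_l I_1$-part, and an $\H_l I_2$-part; each part is the restriction of a unique pullback (respectively $I_1$- or $I_2$-rotated pullback) determined by a cotangent vector at $\pi(p)$, and these vectors vary smoothly with $p$, assembling into global 1-forms $\beta_0, \beta_1, \beta_2 \in \Omega^1_{S^4}$ with $b = \pi^*\beta_0 + I_1 \pi^*\beta_1 + I_2 \pi^*\beta_2$. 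The main obstacle is purely computational: carefully tracking the quaternionic conventions (left versus right multiplication, the identification $T^*_{[1:0]}S^4 \cong \H$, and the action of $I_a$ on $\Omega^1_h$) so that the three Fueter summands line up with the correct $I_a$-rotated pullbacks rather than a permutation of them.
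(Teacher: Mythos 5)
Your proof is correct and takes essentially the same route as the paper's: identify the kernel over the distinguished fiber via Lemma \ref{lemma:horiz} together with the splitting (\ref{fueterfactors}), then globalize using $\Sp(2)$-equivariance and the fact that $I_1,I_2$ commute with $\Sp(2)$. The only difference is that you carry out explicitly the quaternionic computation matching the three Fueter summands $\H_l,\ \H_l I_1,\ \H_l I_2$ with the pullbacks and their $I_1$-, $I_2$-rotations, a step the paper asserts ``follows directly.''
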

\begin{proof} Over the fiber $S^3_0,$ this follows directly from Lemma \ref{lemma:horiz} and the description (\ref{fueterfactors}) of $\mathfrak{F}.$ Since $\Sp(2)$ preserves $\pi^* \Omega^1_{S^4}$ and commutes with $I_1$ and $I_2,$ the subspace (\ref{3subsheaves}) is also invariant under $\Sp(2).$ Hence, it agrees with $\ker \delta$ globally.
\end{proof}

\begin{thm}\label{thm:generaldefos} Let $A = \pi^*B$ be the pullback of an irreducible ASD instanton on $S^4,$ and let
$$V = \ker \left( D_B^+ \oplus D_B^* \right) \subset \Omega^1_{S^4} \left( \gothg_E \right)$$
denote the space of infinitesimal deformations of $B.$ The space of infinitesimal deformations of $A,$ as a $\G_2$-instanton, is given by
\begin{equation}\label{generaldefos:maineq}
\ker \scrL_A = \pi^* V \oplus  I_1 \left( \pi^* V \right) \oplus I_2 \left( \pi^* V \right) \subset \Omega^1_{S^7} \left( \gothg_{\pi^*E} \right).
\end{equation} 
For structure group $\SU(2),$ this has dimension $3 \left( 8\kappa - 3 \right).$
\end{thm}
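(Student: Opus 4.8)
The plan is to compute $\ker\scrL_A$ in two passes: first restrict to the subspace $\ker\varphi$ of Proposition~\ref{prop:kerphi} using the vanishing theorem together with the horizontal part of the instanton equation, and then match the remaining conditions with the anti-self-duality operator on the base. First I would record that $A = \pi^*B$ is irreducible: any covariant-constant section $f$ of $\gothg_{\pi^*E}$ satisfies $\nabla_A^h f = 0$, so Lemma~\ref{lemma:horizproperties} gives $f = \pi^*h$ with $\nabla_B h = 0$, forcing $f \equiv 0$ by irreducibility of $B$. By the discussion following (\ref{lausquared}), every element of $\ker\scrL_A$ is then of the form $(0,\alpha)$, and Theorem~\ref{thm:vanishing} shows the vertical part of $\alpha$ vanishes, so $\alpha = b \in \Omega^1_h$. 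For horizontal $b$, the equation $\scrL_A b = 0$ is equivalent to the three conditions $d^*b = 0$, $\varphi(b) = (\phi\rintprod db)^h = 0$, and $(\phi\rintprod db)^v = 0$. The middle condition and Proposition~\ref{prop:kerphi} place $b$ in $\pi^*\Omega^1_{S^4}\oplus I_1(\pi^*\Omega^1_{S^4})\oplus I_2(\pi^*\Omega^1_{S^4})$, so I may write $b = \pi^*\beta_0 + I_1\pi^*\beta_1 + I_2\pi^*\beta_2$.

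It then remains to show that $d^*b = 0$ and $(\phi\rintprod db)^v = 0$ hold exactly when each $\beta_j$ lies in $V = \ker(D_B^+ \oplus D_B^*)$. The model computation is the basic summand $b = \pi^*\beta$. Since the Hopf fibers are totally geodesic, $d^*\pi^*\beta = \pi^*D_B^*\beta$; meanwhile $D_A\pi^*\beta = \pi^*D_B\beta$ is of type $(0,2)$, its anti-self-dual part is annihilated by $\phi\rintprod(-)$ exactly as in Lemma~\ref{lemma:instapullback}, and its self-dual part $\pi^*(D_B^+\beta)$ is carried by $\phi\rintprod(-)$ into the vertical forms via $\phi\rintprod\bar{\omega}_i = \pm 2\zeta_i$. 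Hence on this summand $\scrL_A\pi^*\beta = 0$ if and only if $D_B^*\beta = 0$ and $D_B^+\beta = 0$, that is, $\beta \in V$; in particular $\pi^* V \subseteq \ker\scrL_A$.

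For the twisted summands $I_1\pi^*\beta$ and $I_2\pi^*\beta$, which are not basic, I would not differentiate directly but instead reduce to a single fiber. The operator $b \mapsto \bigl(d^*b,\, (\phi\rintprod db)^v\bigr)$ is $\Sp(2)$-equivariant, and by Proposition~\ref{prop:kerphi} the three-summand splitting of $\ker\varphi$ is $\Sp(2)$-invariant, so it suffices to evaluate along the fiber $S^3_0 \subset \{(x,0)\}$ and extend by invariance. Working in the explicit frame $\bar{e}^j = dy^j - \zeta_i\omega_{ijk}y^k$ from before Lemma~\ref{lemma:horiz} and using the identity (\ref{phi0ebar}) together with the Fueter relation (\ref{fueter}), the two residual conditions on each summand collapse to $D_B^+\beta_j = 0$ and $D_B^*\beta_j = 0$. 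This yields (\ref{generaldefos:maineq}); the sum is direct because the three pieces sit in distinct factors of the decomposition in Proposition~\ref{prop:kerphi}.

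Finally, the dimension count follows from $\dim\ker\scrL_A = 3\dim V$: for structure group $\SU(2)$ with $c_2 = \kappa$, the Atiyah--Hitchin--Singer index theorem \cite{ahs} on $S^4$ gives $\dim V = 8\kappa - 3$ (the $H^0$ and $H^2$ of the ASD deformation complex vanishing by irreducibility and $b^+ = 0$), whence $\dim\ker\scrL_A = 3(8\kappa - 3)$. I expect the genuine obstacle to be the twisted-summand identification: verifying that the vertical component of $\phi\rintprod d(I_j\pi^*\beta)$ reproduces precisely $D_B^+\beta$ --- with no cross-contribution from the other two summands, which are isomorphic $\Sp(2)$-representations and so are not separated by equivariance alone --- requires careful bookkeeping of how $I_1$ and $I_2$ interact with the vertical derivative $d^v$ and with the sign pattern of $\phi_{std}$ in (\ref{phistdsp2def}), including the asymmetric role of the excluded $I_3$-factor.
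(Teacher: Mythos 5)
Your skeleton is the paper's own: Theorem \ref{thm:vanishing} kills the vertical component, Proposition \ref{prop:kerphi} forces $\alpha = \alpha_0 + \alpha_1 + \alpha_2$ per (\ref{3subsheaves}), and the dimension comes from Atiyah--Hitchin--Singer. Your preliminary points (irreducibility of $A = \pi^*B$ via Lemma \ref{lemma:horizproperties}, so that $u \equiv 0$; the explicit verification on the basic summand $\pi^*\beta$) are correct and in fact slightly more detailed than what the paper writes. The genuine gap is the step you yourself flag in your last paragraph and do not carry out: identifying the residual conditions on the twisted summands $I_1\pi^*\beta_1,\ I_2\pi^*\beta_2$, and ruling out cross-contributions among the three summands, which, as you correctly note, are isomorphic $\Sp(2)$-representations and hence not separated by equivariance. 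As submitted, the proof is incomplete exactly there, and the fiberwise frame computation you propose is not the route the paper takes.

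The paper closes this gap with a linear-algebra observation that makes any summand-by-summand computation of the vertical output unnecessary. You already used the key fact on the basic summand: contracting (\ref{phistdsp2def}) gives $\phi \rintprod \bar{\omega}_i = \pm 2 \zeta_i$, i.e.\ $\phi \rintprod (-)$ is a pointwise bundle \emph{isomorphism} from $\Omega^{2+}_h$ to $\Omega^1_v$. Since $\alpha$ is horizontal, the vertical part of the instanton equation is
\begin{equation*}
\left( \phi \rintprod d\alpha \right)^v = \phi \rintprod \left( d^h \alpha \right)^+ ,
\end{equation*}
so by injectivity the vertical equation is equivalent to $(d^h\alpha)^+ = 0$ outright. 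Writing $(d^h\alpha)^+ = \sum_i (d^h\alpha_i)^+$ as in (\ref{dhalphaplus}), and using that these three terms are linearly independent, the vanishing of the sum forces $(d^h\alpha_i)^+ = 0$ for each $i$ separately; likewise $d^*\alpha = 0$ splits into $d^*\alpha_i = 0$ for each $i$. The system therefore decouples into three independent copies of the conditions $d^*\alpha_i = 0$, $(d^h\alpha_i)^+ = 0$ --- the (pullback, respectively twist of the) ASD deformation equations $D_B^*\beta_i = 0$, $D_B^+\beta_i = 0$ --- with no interaction terms, and none of the bookkeeping of how $I_1, I_2$ interact with $d^v$ or with the sign pattern of $\phi_{std}$ ever enters. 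In short: do not try to compute the vertical image of each twisted summand; use that $\phi\rintprod$ is injective on $\Omega^{2+}_h$ and that the three self-dual pieces are independent, after which the per-summand identification proceeds exactly as in your basic-summand computation (or via the twistor interpretation of \S\ref{ss:discussion}).
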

\begin{proof} Let $\alpha \in \ker \scrL_A.$ According to Theorem \ref{thm:vertvanishing}, the vertical component of $\alpha$ vanishes, so it is a global section of $\Omega^1_h (\gothg_E).$ The image $\delta(\alpha) \in \Omega^1_h$ under the horizontal component of the deformation operator must therefore vanish; by Proposition \ref{prop:kerphi}, we have
$$\alpha = \alpha_{0} + \alpha_{1} + \alpha_{2}$$
according to (\ref{3subsheaves}). 

It remains to show that $\scrL_A(\alpha) = 0$ if and only if $d^*\alpha_{i} = 0$ and $(d^h\alpha_{i})^+ = 0,$ for $i = 0,1,2.$ Clearly $d^*\alpha = 0$ if and only if $d^*\alpha_{i} = 0 \, \forall \, i.$ 
We have
\begin{equation}\label{dhalphaplus}
(d^h \alpha)^+ = \sum_{i = 1}^3 (d^h \alpha_{i} )^+
\end{equation}
where $(d^h \alpha_{i} )^+$ 
are linearly independent. 
But the map $\phi \, \rintprod $ is a bundle isomorphism from $\Omega^{2+}_h$ to $\Omega^1_v,$ so
$$\left(\phi \rintprod d \alpha \right)^v = \phi \rintprod (d^h \alpha)^+ = 0$$
if and only if $(d^h \alpha)^+ = 0.$ By (\ref{dhalphaplus}), this implies that $(d^h \alpha_{i} )^+ = 0$ for $i = 0,1,2,$ as desired.

The dimension formula for structure group $\SU(2)$ follows from the Atiyah-Hitchin-Singer Theorem \cite{ahs}. 
\end{proof}

\section{Global picture}\label{sec:global}

In this section, we discuss the global structure of the components of the moduli space obtained by pullback under the Hopf fibration. We first prove Theorem \ref{thm:grassmann} on the structure of the $\kappa = 1$ component. For higher charge, the picture necessarily involves Hermitian-Yang-Mills connections on the twistor space $\C \P^3 \to S^4.$

\subsection{Proof of Theorem \ref{thm:grassmann}}
Let $W$ be given by (\ref{wdef}). Taking $A_0$ in the gauge (\ref{standardg2instanton}), we may define the smooth 5-dimensional family of connections
\begin{equation*}
V = \{\exp(w)^* A_0 \mid w \in W \} \subset \scrA_E.
\end{equation*}
By the construction of \S \ref{ss:hopf}, this family is equal to the pullback by the Hopf fibration of the 5-dimension family of unit-charge ASD instantons on $S^4.$
Further define a smooth map
\begin{equation}\label{firstg2map}
\begin{split}
\Spin(7) \times V & \to \scrA_E \\
\left(\sigma , A \right) & \mapsto \sigma^*A .
\end{split}
\end{equation}
By construction, the image of (\ref{firstg2map}) consists of $\G_2$-instantons.

Denote the principal bundle
$$Q = \Spin(7) \to \Spin(7) / \Sp(2) \times \U(1) .$$
The stabilizer of $V$ is $\Sp(2) \times \U(1) \subset \Spin(7),$ 
which acts on $V,$ modulo gauge, by the 5-dimensional representation of $\Sp(2)$ and the trivial representation of $\U(1).$ Taking the quotient by the gauge group $\scrG_E,$ the map (\ref{firstg2map}) descends to a smooth map from the associated bundle
$$X = Q \times_{\Sp(2) \times \U(1)} V$$
to the space of connections modulo gauge:
\begin{equation*}
\Phi: X 
\to \scrA_E / \scrG_E.
\end{equation*}
Notice that
$$\Spin(7)/ \Sp(2) \times \U(1) \cong \SO(7) / \SO(5) \times \SO(2) = \G^{or}(5,7).$$
Hence, $X$ is equal to the vector bundle over $\G^{or}(5,7)$ associated to the standard representation of $\SO(5),$ {\it i.e.}, the tautological 5-plane bundle. This is a 15-dimensional manifold.

We claim that the above map $\Phi$ is a proper embedding. 
For each $A \in V,$ the image of the differential of $\Phi$ is equal (modulo gauge) to the space of linear deformations $\scrV_{A}$ given by (\ref{15deformations}).
By examining (\ref{stdg2instcurv}), it is easy to see that $F_{A_0}(x,y)$ spans $\Lie(\Sp(2))$ as $(x,y)$ varies over $S^7;$ the same is true of each element of the 5-dimensional family $V.$ 
Hence, according to Proposition \ref{prop:15deformations}, $\dim(\scrV_A) = 15,$ and the differential of $\Phi$ has full rank at each point of $V.$ Since the construction is invariant under the action of $\Spin(7),$ the same is true at each point of $X.$ Hence, the map $\Phi$ is an embedding. 

Next, note that as $x \in X$ tends to infinity in a fiber $V,$ the curvature of $\Phi(x)$ concentrates along an associative great sphere (the preimage under the Hopf fibration of a point in $S^4$). Hence, $\Phi(x)$ tends to infinity in $\scrA_E / \scrG_E.$ Since the base space $\G^{or}(5,7)$ is compact, this shows that $\Phi$ is a proper map.

Now, since for each $x \in X,$ $\Phi(x)$ is equivalent modulo $\Spin(7)$ to a pullback from $S^4,$ Theorem \ref{thm:generaldefos} implies that the space of infinitesimal deformations at $\Phi(x)$ has dimension 15. Since $\dim(X) = 15$ and $\Phi$ is a proper embedding,
we conclude that $\Phi$ is in fact a diffeomorphism onto a connected component of the $\G_2$-instanton moduli space.
\qed

\begin{rmk} By the same construction, we may compactify $X$ fiberwise to a $\bar{B}^5$-bundle, 
where a boundary point records bubbling along an associative great sphere. 
\end{rmk}


\subsection{Chern-Simons functional and Hermitian-Yang-Mills connections}

Given two connections $A$ and $A_1$ on a bundle $E,$ let $a = A - A_1$ and define the relative Chern-Simons 3-form:
\begin{equation}
cs(A, A_1) = - Tr \left( a \wedge \left( F_{A_1} + \frac{1}{2} d_{A_1}a + \frac{1}{3} a \wedge a \right) \right).
\end{equation}
This satisfies
$$d cs(A, A_1) = Tr \left( F_A \wedge F_A - F_{A_1} \wedge F_{A_1} \right).$$
On a 7-manifold $M$ with $\G_2$-structure, we may define the global relative \emph{Chern-Simons functional}
\begin{equation}
CS_{\psi}(A, A_1) = \frac{-1}{4 \pi^4}\int_M \psi \wedge cs(A, A_1).
\end{equation}
This normalization is chosen so that the formula in the following proposition will be integer-valued. 
Let
\begin{equation}\label{bigpi}
\Pi : S_{std}^7 \to \C \P^3
\end{equation}
be the natural projection for the standard complex structure, $I_1.$

\begin{prop}\label{prop:hym} Let $B$ be a connection on an $\SU(n)$-bundle $E \to \C\P^3.$ Then $A = \Pi^*B$ is a $\G_2$-instanton on $S^7_{std}$ if and only if $B$ is Hermitian-Yang-Mills.

Given any two such connections $(E,B)$ and $(E_1,B_1)$ for which $\Pi^* E \cong \Pi^*E_1,$ we have 
\begin{equation*}
CS_\psi \left(A, A_1 \right) = \LA \LB \omega_{FS} \RB \cup \left( c_2(E) - c_2(E_1) \right), \LB \C\P^3 \RB \RA.
\end{equation*}
\end{prop}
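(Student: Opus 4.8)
The plan is to push everything through the circle (complex) Hopf fibration $\Pi$ and its Sasakian geometry. Set $\eta = (\vec{r}\intprod\omega)|_{S^7}$ for the standard contact form (this is the invariant $1$-form $\zeta_1$). Since $\mathcal L_{\vec r}\omega = 2\omega$, one has $d\eta = 2\,\Pi^*\omega_{FS}$, and since the Reeb field $I_1\vec{r}$ is $\omega$-dual to the normal radial field, $\omega|_{S^7} = \Pi^*\omega_{FS}$. Starting from $\Psi_0 = \tfrac12\omega\wedge\omega + \Re\Omega$ in (\ref{psi0kahler}) and restricting to $S^7$, I would record
\[
\psi_{std} = \tfrac12\,\Pi^*\omega_{FS}^{\,2} - \eta\wedge\Im\Upsilon, \qquad \Upsilon := (\vec{r}\intprod\Omega)|_{S^7},
\]
where $\Upsilon$ is the horizontal transverse holomorphic volume form, of type $(3,0)$ for $I_1$. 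By $\SU(4)$-invariance it suffices to check this at $(1,0,0,0)$, where $\Omega|_{S^7} = i\,\eta\wedge dz^2\wedge dz^3\wedge dz^4$ makes both the horizontal summand and the contact summand transparent.

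For the first assertion, write $F_A = \Pi^*F_B$, a horizontal $2$-form, and expand $\psi_{std}\wedge F_A$. The two summands of $\psi_{std}$ lie in complementary pieces of the contact grading $\Lambda^6 = \Lambda^6_h\oplus(\eta\wedge\Lambda^5_h)$, so $\psi_{std}\wedge F_A = 0$ splits into a horizontal equation $\Pi^*(\omega_{FS}^{\,2}\wedge F_B)=0$ and a contact equation $\eta\wedge\Im\Upsilon\wedge\Pi^*F_B = 0$. On the threefold $\C\P^3$ the first is equivalent to $\Lambda_{\omega_{FS}}F_B = 0$. For the second, a bidegree count---$\Upsilon\wedge F_B^{0,2}$ is of type $(3,2)$ and $\bar\Upsilon\wedge F_B^{2,0}$ of type $(2,3)$, while all remaining products vanish for degree reasons---reduces it to $F_B^{0,2}=0$, using that $\Upsilon\wedge(\cdot)$ is an isomorphism from $(0,2)$- onto $(3,2)$-forms and that $F_B^{2,0} = -(F_B^{0,2})^{*}$. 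Together these two conditions say precisely that $B$ is Hermitian--Yang--Mills. (Alternatively, this is a one-point linear-algebra computation at $(1,0,0,0)$, using $\SU(4)$-transitivity.)

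For the Chern--Simons identity the essential subtlety is that $\Pi^*E\cong\Pi^*E_1$ makes $a = A-A_1$, hence $cs(A,A_1)$, globally defined \emph{on $S^7$}, yet this $3$-form does \emph{not} descend to $\C\P^3$ when $E\not\cong E_1$; one therefore cannot integrate naively over the circle fibre. Instead I would apply Stokes on the unit ball $B^8\subset\R^8$, extending $\psi_{std}$ by the closed ambient form $\Psi_0$ and extending $A, A_1$ over $B^8\setminus\{0\}$ as $\tilde\Pi^*B, \tilde\Pi^*B_1$ through the projectivization $\tilde\Pi\colon\C^4\setminus\{0\}\to\C\P^3$ (the isomorphism $\Pi^*E\cong\Pi^*E_1$ propagates inward by the deformation retraction onto $S^7$). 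Using $d\Psi_0 = 0$ and $d\,cs(A,A_1) = \mathrm{Tr}(F_A\wedge F_A - F_{A_1}\wedge F_{A_1})$, and noting that the inner-boundary integral over $S^7_\epsilon$ scales like $\epsilon^4\to 0$ (as $\Psi_0$ is homogeneous of degree $4$ while $cs$ is scale-invariant), Stokes yields
\[
\int_{S^7}\psi_{std}\wedge cs(A,A_1) = \int_{B^8}\Psi_0\wedge\big(\mathrm{Tr}F_A^2 - \mathrm{Tr}F_{A_1}^2\big).
\]

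Finally I would evaluate the two pieces of $\Psi_0$ against $\mathrm{Tr}F_A^2 - \mathrm{Tr}F_{A_1}^2 = \tilde\Pi^*(\mathrm{Tr}F_B^2 - \mathrm{Tr}F_{B_1}^2)$. As $B, B_1$ are Hermitian--Yang--Mills, $\mathrm{Tr}F^2$ is of type $(2,2)$, so wedging with $\Re\Omega$ (of type $(4,0)\oplus(0,4)$) produces types $(6,2),(2,6)$ and vanishes---this is where the instanton hypothesis enters. For the $\tfrac12\omega^2$ piece I would substitute $\omega = r\,dr\wedge\eta + r^2\,\tilde\Pi^*\omega_{FS}$ (equivalent to $d\eta = 2\Pi^*\omega_{FS}$); its square contributes only the fibre-degree-two term $r^3\,dr\wedge\eta\wedge\tilde\Pi^*\omega_{FS}$, the purely horizontal term dying because $\omega_{FS}^{\,2}\wedge(\cdot)$ exceeds the top degree of $\C\P^3$. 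Fibre-integrating over $(0,1]\times S^1$ then collapses this to a constant times $\int_{\C\P^3}\omega_{FS}\wedge(\mathrm{Tr}F_B^2 - \mathrm{Tr}F_{B_1}^2)$, i.e. to $\langle[\omega_{FS}]\cup(c_2(E)-c_2(E_1)),[\C\P^3]\rangle$. The remaining work is purely the matching of normalizations---the radial factor $\int_0^1 r^3\,dr$, the Reeb period $\int_{S^1}\eta$, and the Chern--Weil constant in $c_2 = \tfrac1{8\pi^2}\mathrm{Tr}F^2$---which the prefactor $-1/(4\pi^4)$ in the definition of $CS_\psi$ is arranged to absorb, giving the stated identity with coefficient $1$. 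I expect the one genuine obstacle to be the global analysis flagged above: the non-descent of $cs(A,A_1)$, and the careful extension over $B^8\setminus\{0\}$ with control of the inner boundary term.
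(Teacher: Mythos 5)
Your proposal is correct in substance, but it proves the Chern--Simons identity by a genuinely different route than the paper. For the instanton equivalence, your Sasakian splitting $\psi_{std} = \tfrac12\Pi^*\omega_{FS}^2 - \eta\wedge\Im\Upsilon$ into horizontal and contact pieces is a global repackaging of what the paper does pointwise: by $\SU(4)$-invariance the paper computes $\phi_{std}(p) = dx^1\wedge\omega + \Re\, dz_2\wedge dz_3\wedge dz_4$ at $p = (1,0,\dots,0)$ and reads off $\omega.F = 0$ and $F^{2,0} = 0$ from $\phi\rintprod F$; your parenthetical one-point alternative \emph{is} the paper's argument. The real divergence is in the second half. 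The paper never leaves $S^7$: it uses the nearly parallel identity $\psi_{std} = \tfrac14 d\phi_{std}$ (from (\ref{phiPsi0})--(\ref{psistd})), integrates by parts on the closed manifold to replace $cs(A,A_1)$ by $\mathrm{Tr}(F_A\wedge F_A - F_{A_1}\wedge F_{A_1})$, which---unlike $cs$---is a pullback from $\C\P^3$, and then integrates over the $S^1$ fibers. This disposes of the non-descent problem you flagged without ever mentioning it, and avoids all of your technical overhead: no extension of the bundle isomorphism over $B^8\setminus\{0\}$, no inner-boundary limit, no integrability check at the origin. Your Stokes-on-the-ball argument, resting on $d\Psi_0 = 0$, is the cone-geometry counterpart of the same mechanism (nearly parallel $=$ torsion-free $\Spin(7)$ cone) and is sound: the $\epsilon^4$ decay of the inner boundary term, and the vanishing of the $\Re\Omega$ and $\omega_{FS}^2$ contributions by type and degree, are all as you say; what it buys is that the argument would survive on any instanton whose $cs$-form extends to a bounding $\Spin(7)$ cone, at the price of more analysis. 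One bookkeeping caveat: you normalize $\omega_{FS}$ by $\Pi^*\omega_{FS} = \omega|_{S^7}$ (so $\int_{\C\P^1}\omega_{FS} = \pi$), whereas the coefficient-one formula requires the integral generator, for which the paper takes $\Pi^*\omega_{FS} = \omega/\pi$; your listed constants $\tfrac14\cdot 2\pi\cdot 8\pi^2 = 4\pi^3$ miss exactly this factor of $\pi$, and only with it does the prefactor $-1/4\pi^4$ get absorbed (up to an overall sign fixed by orientation conventions, which both you and the paper treat loosely).
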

\begin{proof} By $\SU(4)$-invariance, it suffices to consider the point $p = (1, 0, \ldots, 0) \in S^7.$ We shall write
\begin{equation}
T_p S^7 = \R_{x_1} \oplus \C^3.
\end{equation}
Then, from the expression (\ref{psi0kahler}), we have
\begin{equation}
\phi_{std}(p) = \frac{\partial}{\partial x_0} \intprod \Psi_0 = dx^1 \wedge \omega + \Re dz_2 \wedge dz_3 \wedge dz_4
\end{equation}
where $\omega$ is the standard K{\"a}hler form on $\C^3,$ given by (\ref{stdkahlerforms}). Letting
$$F = F_A(p) = \left. \Pi^*F_B \right|_{T_p S^7} = \left. F_B \right|_{\C^3}$$
we have
\begin{equation}
\phi_0 \rintprod F = dx^1 \left( \omega . F \right) + \Re \, dz_2 \wedge dz_3 \wedge dz_4 \rintprod F^{2,0} .
\end{equation}
Therefore, $A$ is a $\G_2$-instanton if and only if
$$\omega .F = 0, \quad F^{2,0} = 0$$
which is to say, $B$ is Hermitian-Yang-Mills on $\C \P^3.$

Next, we have
\begin{equation}\label{thingtointegrate}
\begin{split}
 - 4 \pi^4 CS_\psi (A, A_1) & = \int_{S^7} \psi \wedge cs(A, A_1) \\
& = \frac{1}{4} \int_{S^7} d \phi \wedge cs(A, A_1) \\
& = - \frac{1}{4} \int_{S^7} \phi \wedge d cs(A, A_1) \\
& = - \frac{1}{4} \int_{S^7} \phi \wedge Tr \left( F_A \wedge F_A  - F_{A_1} \wedge F_{A_1} \right).
\end{split}
\end{equation}
Assume that both $A$ and $A_1$ are pullbacks of Hermitian-Yang-Mills connections on $\C \P^3.$ Computing at $p$ as above, we have $F = F_A^{1,1}$ and are left with
\begin{equation}\label{thisexpression}
\phi_0 \wedge Tr \left( F \wedge F \right) = dx^1 \wedge \omega \wedge Tr \left( F \wedge F \right).
\end{equation}
Since the Fubini-Study form on $\C\P^3$ satisfies
$$\Pi^*\omega_{FS} = \frac{\omega}{\pi}$$
on $S^7,$ the expression (\ref{thisexpression}) agrees globally with
$$\pi \zeta_1 \wedge \Pi^* \left( \omega_{FS} \wedge Tr \left( F_B \wedge F_B \right) \right).$$
Integrating (\ref{thingtointegrate}) over the fibers of $\Pi,$ we obtain 
\begin{equation*}
\begin{split}
-4 \pi^4 CS_\psi (A, A_1) & = - \frac{\pi}{4} \int_{S^7} \zeta_1 \wedge \Pi^* \left( \omega_{FS} \wedge Tr \left( F_B \wedge F_B - F_{B_1} \wedge F_{B_1} \right) \right) \\
& = - \frac{\pi^2}{2} \int_{\C\P^3} \omega_{FS} \wedge Tr \left( F_B \wedge F_B - F_{B_1} \wedge F_{B_1} \right) \\
& = - \frac{\pi^2}{2} \LA \LB \omega_{FS} \RB \cup 8 \pi^2 \left(c_2(E) - c_2(E_1) \right) , \LB \C \P^3 \RB \RA \\
& = -4 \pi^4 \LA \LB \omega_{FS} \RB \cup \left(c_2(E) - c_2(E_1) \right) , \LB \C \P^3 \RB \RA
\end{split}
\end{equation*}
as claimed.
\end{proof}

\subsection{Discussion}\label{ss:discussion}
Theorem \ref{thm:generaldefos} can be explained in light of Proposition \ref{prop:hym}, as follows.
Owing to the Ward correspondence, the space of infinitesimal deformations of the pullback of an ASD instanton to the twistor space $\C \P^3 \to S^4,$ as a Hermitian-Yang-Mills connection, is the complexification of the space of ASD deformations. 
In fact, there is an $S^1$ family of complex structures
\begin{equation*}
\cos (\theta) I_1 + \sin (\theta) I_2 
\end{equation*}
such that the quotient map $\Pi_\theta$ 
satisfies Proposition \ref{prop:hym} and factorizes the given Hopf fibration:
\[
\xymatrix{
S^7 \ar@{->}[r]^{\Pi_\theta} \ar@{->}[dr]_{\pi} & \C\P^3 \ar@{->}[d] \\
 & S^4.}
\]
The span of the pullbacks by $\Pi_\theta$ of the deformation space over $\C \P^3,$ for $\theta \in S^1,$ gives the larger space (\ref{generaldefos:maineq}).

According to this argument, the fibration to $\C\P^3$ 
accounts for all of the infinitesimal deformations identified by Theorem \ref{thm:generaldefos}. 
As such, we expect that only the deformations coming from (\ref{bigpi}), or its 6-dimensional family of $\Spin(7)$ conjugates, 
are integrable. So for $\kappa > 1,$ the generic dimension of the pulled-back component of the $\G_2$-instanton moduli space should be\footnote{The $\kappa = 1$ instantons all fail to be ``generic'' due to the extra $\U(1)$ stabilizer generated by $\left( \begin{array}{cc}
I_3  & 0 \\
0 & I_3  \end{array} \right).$} 
\begin{equation}\label{higherchargedimformula}
2\left( 8 \kappa - 3 \right) + 6 = 16 \kappa.
\end{equation}
These components appear to be singular along the subset of instantons coming from $S^4$ ({\it i.e.}, instanton bundles on $\C\P^3$ satisfying a reality condition), 
which are themselves manifolds of dimension
$$ 8 \kappa - 3 + 10 = 8 \kappa + 7.$$

More generally, 
we would like to know where the instantons obtained via pullback 
fit within the full moduli space of $\G_2$-instantons on $S^7.$ While 
examples that are unrelated to any fibration
may exist, 
the following guess is appropriate based on \cite{yuanqicyproduct} and the present work. 
For the statement, fix a reference Hermitian-Yang-Mills connection $B_1$ on an $\SU(n)$-bundle $E_1 \to \C \P^3,$ and let $E = \Pi^* E_1 \to S^7$ and $A_1 = \Pi^* B_1.$

\begin{conj}[Donaldson]\label{conj:donaldson} Let $A$ be a $\G_2$-instanton on $E \to S^7_{std},$ for which
$$CS_{\psi}(A, A_1) \in \Z.$$
Then $A$ is equivalent, modulo the action of $\Spin(7)$ and $\scrG_E,$ to the pullback of a Hermitian-Yang-Mills connection on $\C \P^3.$
\end{conj}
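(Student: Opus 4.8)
The plan is to adapt the strategy of Y.\ Wang \cite{yuanqicyproduct} for $S^1 \times X$ to the twistor fibration $\Pi : S^7_{std} \to \C\P^3$ of Proposition \ref{prop:hym}, using the integrality hypothesis to compensate for the fact that $\Pi$ is not a product. First I would record the Sasaki--Einstein structure underlying $\Pi$: writing $U_1$ for the Reeb field generating the fibers, $\zeta_1$ for the dual contact form, and $\omega_T$, $\Omega_T$ for the transverse K\"ahler form (a multiple of $\Pi^*\omega_{FS}$, cf.\ Proposition \ref{prop:hym}) and holomorphic volume form of the K\"ahler--Einstein base $\C\P^3$, one has the Sasakian expression
$$\phi_{std} = \zeta_1 \wedge \omega_T + \Re\,\Omega_T.$$
Decomposing the curvature of an arbitrary connection as $F_A = F^h + \zeta_1 \wedge \kappa$, with $F^h$ transverse and $\kappa = U_1 \intprod F_A$, the equation $\psi_{std}\wedge F_A = 0$ should become a coupled system: a transverse Hermitian--Yang--Mills condition on $F^h$ together with an equation governing $\kappa$ and the $U_1$-variation of the transverse connection. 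When $\kappa \equiv 0$ and that variation vanishes, $A$ descends to a connection $B$ on $\C\P^3$ which is Hermitian--Yang--Mills by Proposition \ref{prop:hym}; the whole problem is to force descent from $CS_\psi(A, A_1) \in \Z$.

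Second, I would interpret the $U_1$-dependence of $A$ as a path of transverse connections and reorganize the instanton equation as a gradient-type flow of a transverse (holomorphic) Chern--Simons functional on the base, in the spirit of Wang's reduction. In his setting the total space is a product and the flow runs over a compact $S^1$, so a monotonicity argument forces it to be stationary with vanishing Higgs field and gives descent with no further hypothesis. On $S^7$ the Hopf circle bundle is nontrivial, so the transverse connection may acquire monodromy around a Reeb orbit, and it is precisely this monodromy that the integrality of $CS_\psi$ must control. Concretely, the pushforward computation already carried out in Proposition \ref{prop:hym} expresses $CS_\psi(A,A_1)$ as an integral over $\C\P^3$ of $\omega_{FS}$ wedged with curvature terms; I would seek a Bogomolny-type identity bounding $CS_\psi$ below by the topological pairing $\LA [\omega_{FS}] \cup (c_2(E) - c_2(E_1)), [\C\P^3] \RA$, with the defect a nonnegative quantity that measures $\|\kappa\|^2$ and the non-invariance of the transverse connection and that vanishes exactly on pullbacks.

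The main obstacle --- and the reason this remains conjectural --- is twofold. The transverse structure is not Reeb-invariant: the holomorphic volume form $\Omega_T$ rotates by phase under $U_1$, which is exactly why $\phi_{std}$ itself fails to be $U_1$-invariant (compare the essential difficulty noted in Remark \ref{rmk:convention}). A clean reduction to a single gradient flow on $\C\P^3$ must therefore track this phase, most naturally by bringing in the entire $S^1$-family of twistor projections $\Pi_\theta$ and the interplay of $I_1$ and $I_2$ described in \S\ref{ss:discussion}. More seriously, there is no a priori compactness for the moduli of $\G_2$-instantons on $S^7$, so one cannot simply minimize $CS_\psi$; the crux is to establish the correct Bogomolny-type relation, with the right quantization, so that the arithmetic hypothesis $CS_\psi \in \Z$ genuinely forces the nonnegative defect to vanish rather than merely to be an integer. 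This amounts to upgrading a vanishing phenomenon of the type proved infinitesimally in Theorem \ref{thm:vanishing} to the full nonlinear equation, without the irreducibility or symmetry crutches available there; converting the integrality input into an analytic vanishing is where I expect the real difficulty to lie.
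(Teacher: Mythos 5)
The statement you were asked to prove is not proved in the paper: it appears there as Conjecture \ref{conj:donaldson}, attributed to Donaldson, and the paper offers only heuristic motivation for it --- the discussion in \S\ref{ss:discussion}, where the infinitesimal deformations of Theorem \ref{thm:generaldefos} are accounted for by the $S^1$-family of twistor projections $\Pi_\theta$, together with the analogy to Y.~Wang's theorem for $S^1 \times X$ in \cite{yuanqicyproduct}. So there is no proof to compare yours against, and your proposal, as you yourself say, is a research program rather than a proof. Its broad outline is consistent with the paper's motivation: the Sasakian expression $\phi_{std} = \zeta_1 \wedge \omega_T + \Re\,\Omega_T$ agrees with the pointwise computation in the proof of Proposition \ref{prop:hym}; the decomposition $F_A = F^h + \zeta_1 \wedge \kappa$ and the reduction to a transverse Hermitian--Yang--Mills condition coupled to a Reeb-direction evolution is the natural analogue of Wang's reduction; and the obstacles you name (nontriviality of the Hopf circle bundle, the phase rotation of $\Omega_T$ under the Reeb flow --- the same essential difficulty flagged in Remark \ref{rmk:convention} --- and the absence of compactness) are genuine.

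The gap, which you acknowledge, is that the entire analytic core is missing: the Bogomolny-type identity expressing $CS_\psi(A,A_1)$ as a topological pairing plus a nonnegative defect that vanishes exactly on pullbacks is hypothesized, not derived, and it is precisely this identity --- with the correct quantization, so that integrality of $CS_\psi$ forces the defect to vanish rather than merely to lie in $\Z$ --- that constitutes the conjecture's difficulty. One structural point your sketch also underplays: the conclusion of the conjecture allows composition with an arbitrary element of $\Spin(7)$, so descent need not occur along the fixed projection $\Pi$ but along some $\Spin(7)$-conjugate of it (a 6-dimensional family, per \S\ref{ss:discussion}); any monotonicity or descent argument must therefore first select the correct fibration, an extra degree of freedom absent from Wang's product setting and from your outline. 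Finally, upgrading the infinitesimal vanishing of Theorem \ref{thm:vanishing} to the nonlinear statement, without irreducibility or pullback hypotheses, is not a step you supply. As it stands, your text is a sensible plan of attack on an open problem, not a proof, and should be labeled as such.
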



\appendix

\section{Squashed 7-sphere}\label{appendix:squashed}


As a check on our conventions, we include the following short proof that $\phi_{sq}$ is a nearly parallel $\G_2$-structure.

Let $\zeta_i$ and $\bar{\omega}_i$ be the $\Sp(2)$-invariant frames for $\Omega^1_v$ and $\Omega^{2+}_h$ given by Definition \ref{defn:coframes}, and $\nu$ the $(3,0)$ volume form of the Hopf fibration. 
The squashed $\G_2$-structure, defined in Example \ref{ex:squashed}, may be rewritten
\begin{equation*}
\phi_{sq} = \frac{27}{25} \left( \frac{1}{5} \nu - \zeta_1 \wedge \bar{\omega}_1 - \zeta_2 \wedge \bar{\omega}_2 - \zeta_3 \wedge \bar{\omega}_3 \right). 
\end{equation*}
More generally, for $a, b > 0,$ define the $\Sp(2) \Sp(1)$-invariant $\G_2$-structure
\begin{equation*}
\phi_{a,b} = a \nu - b \zeta_i \wedge \bar{\omega}_i.
\end{equation*}
One can check from (\ref{g2metric}) that the metric and volume form defined by $\phi_{a,b}$ are
\begin{equation*}
g_{a,b} = a^{-1/3} \left( a \left. g_{std} \right|_{T_v} +  b \left. g_{std} \right|_{T_h} \right), \qquad Vol_{a,b} = a^{1/3} b^2 Vol_{std}.
\end{equation*}
In this metric, we have 
\begin{equation*}
\begin{split}
\left| \nu \right|_{g_{a,b}} & = a^{-1}, \qquad \qquad \left| \zeta_i \wedge \bar{\omega}_j \right|_{g_{a,b}} = \sqrt{2} b^{-1} \\
*_{g_{a,b}} \nu & = a^{-5/3} b^2 \bar{\nu}, \qquad *_{g_{a,b}} \left( \zeta_i \wedge \bar{\omega}_j \right) = \frac{ a^{1/3} }{2} \epsilon_{ik\ell} \zeta_k \wedge \zeta_\ell \wedge \bar{\omega}_j .
\end{split}
\end{equation*}
The dual 4-form is given by
\begin{equation*}
\begin{split}
\psi_{a,b} & = *_{g_{a,b}} \phi_{a,b} = a^{-2/3} b \left( b \bar{\nu} - a \epsilon_{ijk} \zeta_i \wedge \zeta_j \wedge \bar{\omega}_k \right).
\end{split}
\end{equation*}
In this notation, we have 
\begin{equation*}
\begin{split}
\phi_{sq} & = \phi_{\frac{27}{125},\frac{27}{25}} = \left( \frac{3}{5} \right)^3 \phi_{1,5}.
\end{split}
\end{equation*}
The associated metric and volume form are
\begin{equation*}
g_{sq} = \frac{9}{5} \left( \frac{1}{5} \left. g_{std} \right|_{T_v} +  \left. g_{std} \right|_{T_h} \right) , \qquad Vol_{sq} = \frac{3^7}{5^5} Vol_{std}.
\end{equation*}
To see that $\phi_{sq}$ is nearly parallel, we calculate as follows. According to (\ref{dgammai}), we have
\begin{equation*}
d \zeta_i  = \epsilon_{ijk} \zeta_j \wedge \zeta_k + 2 \bar{\omega}_i.
\end{equation*}
Applying $d$ to (\ref{dgammai}), we obtain
$$0 = 2 \epsilon_{ijk} d \zeta_j \wedge \zeta_k + 2 d \bar{\omega}_i$$
and
\begin{equation*}
\begin{split}
d \bar{\omega}_i & = \epsilon_{ijk} \zeta_j \wedge d \zeta_k \\
& = 2 \epsilon_{ijk} \zeta_j \wedge \bar{\omega}_k + \epsilon_{ijk} \epsilon_{k\ell m} \zeta_j \wedge \zeta_{\ell} \wedge \zeta_m \\
& = 2 \epsilon_{ijk} \zeta_j \wedge \bar{\omega}_k.
\end{split}
\end{equation*}
Next, we compute
\begin{equation*}
\begin{split}
d \nu & = d \zeta_1 \wedge \zeta_2 \wedge \zeta_3 - \zeta_1 \wedge d \zeta_2 \wedge \zeta_3 + \zeta_1 \wedge \zeta_2 \wedge d \zeta_3 \\
& = 2 \left( \bar{\omega}_1 \wedge \zeta_2 \wedge \zeta_3 -  \zeta_1 \wedge \bar{\omega}_2 \wedge \zeta_3 +  \zeta_1 \wedge \zeta_2 \wedge \bar{\omega}_3 \right) \\
& = \epsilon_{ijk} \bar{\omega}_i \wedge \zeta_j \wedge \zeta_k.
\end{split}
\end{equation*}
We then have
\begin{equation*}
\begin{split}
d \phi_{a,b} & = a \epsilon_{ijk} \bar{\omega}_i \wedge \zeta_j \wedge \zeta_k - b \left( 2 \bar{\omega}_i + \epsilon_{ijk} \zeta_j \wedge \zeta_k \right) \wedge \bar{\omega}_i  + 2b \zeta_i \wedge \epsilon_{ijk} \zeta_j \wedge \bar{\omega}_k \\
& = \left( a + b \right) \epsilon_{ijk} \zeta_i \wedge \zeta_j \wedge \bar{\omega}_k - 2b \bar{\omega}_i \wedge \bar{\omega}_i \\
& = \left( a + b \right) \epsilon_{ijk} \zeta_i \wedge \zeta_j \wedge \bar{\omega}_k - 12b \bar{\nu}.
\end{split}
\end{equation*}
This gives
\begin{equation*}
d \phi_{1,5} = - \frac{12}{5} \psi_{1,5}.
\end{equation*}
After normalizing by $ \frac{12}{5\cdot 4} = \frac{3}{5},$ we have
\begin{equation*}
d \phi_{sq} = -4 \psi_{sq}
\end{equation*}
as claimed.

\begin{rmk} Notice that although $\phi_{1,1}$ and $\phi_{std}$ both correspond to the standard metric on $S^7,$ they are not isomorphic, since the former is not a nearly parallel $\G_2$-structure. In fact, Friedrich \cite{friedrichuniqueness} has shown that $\phi_{std}$ 
is the unique nearly parallel $\G_2$-structure (up to rotations) compatible with the round metric on $S^7.$ 
\end{rmk}

\bibliographystyle{amsinitial}
\bibliography{biblio}

\end{document}